\newcommand{\mbb}[1]{\mathbb{#1}}
\newcommand{\mbf}[1]{\mathbf{#1}}
\newcommand{\bs}{\boldsymbol}
\newcommand{\tr}{\textup{tr}}
\newcommand{\wt}{\widetilde}
\newcommand{\mc}{\mathcal}
\newcommand{\ac}{\accentset{\circ}}
\newcommand{\diag}{\textup{diag}}
\newcommand{\Tr}[1]{\left\langle#1\right\rangle}
\renewcommand{\det}{\textup{det}}
\renewcommand{\Re}{\textup{Re}}
\renewcommand{\Im}{\textup{Im}}
\numberwithin{equation}{section}
\newtheorem{theorem}{Theorem}
\newtheorem{lemma}{Lemma}[section]
\theoremstyle{remark}
\theoremstyle{definition}
\title{Universality for Weakly Non-Hermitian Matrices: Bulk Limit}
\author{Mohammed Osman}
\date{}
\begin{document}

\maketitle

\abstract{We consider complex, weakly non-Hermitian matrices $A=W_{1}+i\sqrt{\tau_{N}}W_{2}$, where $W_{1}$ and $W_{2}$ are Hermitian matrices and $\tau_{N}=O(N^{-1})$. We first show that for pairs of Hermitian matrices $(W_{1},W_{2})$ such that $W_{1}$ satisfies a multi-resolvent local law and $W_{2}$ is bounded in norm, the bulk correlation functions of the weakly non-Hermitian Gauss-divisible matrix $A+\sqrt{t}B$ converge pointwise to a universal limit for $t=O(N^{-1+\epsilon})$. Using this and the reverse heat flow we deduce bulk universality in the case when $W_{1}$ and $W_{2}$ are independent Wigner matrices with sufficiently smooth density.}

\section{Introduction}
It is often the case that the dynamics of an open quantum system can be described by an effective Hamiltonian that is in some sense a non-Hermitian perturbation of the closed system. A standard example of such a situation is a cavity attached to semi-infinite leads. In this case, the Hamiltonian of the cavity acquires a non-Hermitian part whose norm depends on the strength of the coupling with the leads and whose rank depends on the number of open scattering channels. To describe scattering in open cavities whose closed counterparts are chaotic one would like a model of non-Hermitian random matrices for which we can control the degree of non-Hermiticity. Perhaps the simplest model of this kind is the elliptic Ginibre ensemble, defined by
\begin{align}
    B&=V_{1}+i\sqrt{\tau}_{N}V_{2},\label{eq:ellipticGinUE}
\end{align}
where $V_{1}$ and $V_{2}$ are independent Gaussian Hermitian matrices and $\tau_{N}\in[0,1]$. When $\tau_{N}=1$, the real and imaginary parts of $B$ have the same variance and therefore $B$ can be considered to be maximally non-Hermitian. In this case we say that $B$ belongs to the ensemble of complex Ginibre matrices (GinUE). When $\tau_{N}=0$, then $B=V_{1}$ is Hermitian and belongs to the Gaussian Unitary Ensemble (GUE).

We might expect that for some sequences $\tau_{N}\to0$ as $N\to\infty$, there is a limit in which the eigenvalues of $B$ follow a point process that interpolates between the GUE and GinUE. Fyodorov, Khoruzhenko and Sommers \cite{fyodorov_almost_1997} studied the case in which $\tau_{N}=O(N^{-1})$, which they called the regime of weak non-Hermiticity. Using the method of orthogonal polynomials, they found the following limit for the bulk correlation functions
\begin{align}
    \lim_{N\to\infty}\frac{1}{(N\pi\rho_{sc}(E))^{2k}}\rho^{(k)}_{N}\left(E+\frac{\mbf{z}}{N\pi\rho_{sc}(E)}\right)&=\det\left[K_{\tau_{E}}(z_{j},z_{l})\right],
\end{align}
where $E\in(-2,2)$, $\rho_{sc}$ is the semicircle density, $\tau_{E}=\pi\rho_{sc}(E)\lim_{N\to\infty}N\tau_{N}\in(0,\infty)$ and the kernel is given by (eq. (8) in \cite{fyodorov_almost_1997} with a different scaling)
\begin{align}
    K_{\tau}(z_{j},z_{l})&=\frac{1}{(2\pi)^{3/2}\tau^{1/2}}e^{-\frac{1}{4\tau}\left[(\Im z_{j})^{2}+(\Im z_{l})^{2}\right]}\int_{-1}^{1}e^{-2\tau\lambda^{2}-i\lambda(z_{j}-\bar{z}_{l})}d\lambda.\label{eq:Kbulk}
\end{align}
The same authors also gave heuristic arguments based on the supersymmetry method for the universality of this limit in \cite{fyodorov_universality_1998}. A kind of universality was proven by Akemann, Cikovic and Venker \cite{akemann_universality_2018}, who showed that the same limit governs the bulk statistics in ensembles of Gaussian elliptic matrices with hard or soft constraints on the trace $\tr B^{*}B$.

In this paper we prove that this limit holds in the weak sense when we take $V_{1}$ and $V_{2}$ to be Wigner matrices with sufficiently smooth distributions. We use the three-step strategy introduced in \cite{erdos_bulk_2010} for Hermitian matrices and recently adapted to complex, non-Hermitian matrices in \cite{maltsev_bulk_2023}. Thus our first step is to show that the pointwise limit holds if we take $V_{1}$ and $V_{2}$ to have a small Gaussian component, i.e. to be Gauss-divisible matrices. This is done by an asymptotic analysis of an explicit integral formula for the correlation functions derived by the partial Schur decomposition. For the asymptotic analysis we require $V_{1}$ to satisfy a certain multi-resolvent local law. If $V_{1}$ is a Wigner matrix, then this local law has been proven to hold by Cipolloni, Erd\H{o}s and Schr\"{o}der \cite{cipolloni_optimal_2022}. From this we obtain that the weak limit is universal when $V_{1}$ is a Wigner matrix, $V_{1}$ and $V_{2}$ are independent and both have a small Gaussian component. The Gaussian part can then be removed using the method of time reversal from \cite{erdos_bulk_2010}, which has a straightforward generalisation to non-Hermitian matrices. This step requires both $V_{1}$ and $V_{2}$ to be Wigner matrices with sufficiently smooth distributions.

\paragraph{Notation}
The space of $N\times N$ complex (respectively complex Hermitian) matrices is denoted by $\mbf{M}_{N}$ (respectively $\mbf{M}^{sa}_{N}$). The unitary group of dimension $N$ is denoted by $\mbf{U}_{N}$. Scalar multiples of the identity will be denoted by the scalar alone. We will denote by $\mbf{x}$ the column vector or the diagonal matrix with entries $x_{i}$ depending on the context. For $A\in\mbf{M}_{N}$ we use the following notation: $\Tr{A}:=N^{-1}\tr A$; $\left|A\right|=\sqrt{A^{*}A}$; $\Re A=\frac{1}{2}(A+A^{*})$; $\Im A=\frac{1}{2i}(A-A^{*})$. The operator norm is denoted by $\left\|A\right\|$. The Pauli matrices are defined by
\begin{align}
    \sigma_{X}&=\begin{pmatrix}0&1\\1&0\end{pmatrix},\quad\sigma_{Y}=\begin{pmatrix}0&-i\\i&0\end{pmatrix},\quad\sigma_{Z}=\begin{pmatrix}1&0\\0&-1\end{pmatrix}.\label{eq:Pauli}
\end{align}
The Vandermonde determinant is denoted by $\Delta(\mbf{x})=\Delta(x_{1},...,x_{n})$.

Let $W_{1},W_{2}\in\mbf{M}^{sa}_{N}$ and $G_{z}:=(W_{1}-z)^{-1}$. Let $\ac{W}_{2}:=W_{2}-\langle W_{2}\rangle$ be the traceless part of $W_{2}$. Define the quantities
\begin{align}
    m_{N}(z)&=\Tr{G_{z}},\label{eq:m}\\
    \alpha_{N}(z)&=N\tau_{N}\Tr{(\Re(G_{z})\ac{W}_{2})^{2}},\label{eq:alpha}\\
    \beta_{N}(z)&=N\tau_{N}\Tr{(\Im(G_{z})\ac{W}_{2})^{2}}.\label{eq:beta}
\end{align}
Fix a small constant $\epsilon>0$. Let $n_{\epsilon}=\lceil\frac{48}{\epsilon}\rceil$ and define the domain
\begin{align}
    S_{\epsilon}&=\left\{z\in\mbb{C}:\left|\Re z\right|<10,\,N^{-1+\epsilon}\leq\left|\Im z\right|\leq10\right\}.\label{eq:spectralDomain}
\end{align}
We denote by $\mc{W}_{N,\epsilon}$ the class of matrices $(W_{1},W_{2})$ for which the following conditions hold uniformly in $S_{\epsilon}$:
\paragraph{C0:} There is a constant $c_{0}$ such that
\begin{align}
    \tag{C0}
    \left\|W_{1}\right\|&\leq c_{0},\quad\left\|W_{2}\right\|\leq c_{0}.\label{eq:C0}
\end{align}

\paragraph{C1:} There are positive constants $c_{m},c_{m'}$ such that
\begin{align}
    \tag{C1.1}
    |m_{N}(z)|&\leq c_{m},\label{eq:C1.1}
\end{align}
and, when $|\Re z|<2$,
\begin{align}
    \tag{C1.2}
    1/c_{m}\leq|\Im m_{N}(z)|&\leq c_{m},\label{eq:C1.2}\\
    \tag{C1.3}
    \frac{\Im(z)|m'_{N}(z)|}{\Im m_{N}(z)}&<1-c_{m'}.\label{eq:C1.3}
\end{align}

\paragraph{C2:} There is a constant $c_{\beta}$ such that
\begin{align}
    \tag{C2}
    1/c_{\beta}\leq\beta_{N}(z)&\leq c_{\beta}.\label{eq:C2}
\end{align}

\paragraph{C3:} We have
\begin{align}
    \tag{C3.1}
    \left|\Tr{G_{z}\ac{W}_{2}}\right|&\leq\frac{N^{\epsilon/2}}{N\sqrt{\eta}},\label{eq:C3.1}
\end{align}
and there is a constant $c_{3}$ such that
\begin{align}
    \tag{C3.2}
    \left|\Tr{\prod_{j=1}^{m}G_{z_{j}}\ac{W}_{2}}\right|&\leq \frac{c_{3}}{\eta^{m/2-1}},\label{eq:C3.2}
\end{align}
for $m=2,...,4n_{\epsilon}$, where $\eta=\min_{1\leq j\leq m}\left|\Im z_{j}\right|$.

Note that the lower bound in \eqref{eq:C1.3} identifies $(-2,2)$ as the bulk region.

For each pair $(W_{1},W_{2})\in\mc{W}_{N,\epsilon}$ and a sequence $\tau_{N}\in[0,1]$ we construct an elliptic matrix $A$ by
\begin{align}
    A&=W_{1}+i\sqrt{\tau_{N}}W_{2}.\label{eq:Aelliptic}
\end{align}
The ensemble of Gauss-divisible elliptic matrices is defined by
\begin{align}
    A_{t}&=A+\sqrt{t}B,\label{eq:GaussDivisible}
\end{align}
with $B$ as in \eqref{eq:ellipticGinUE}. For $\tau\in(0,\infty)$ let
\begin{align}
    \rho^{(k)}_{\tau}(\mbf{z})&=\det\left[K_{\tau}(z_{j},z_{l})\right],\label{eq:rhoTau}
\end{align}
with $K_{\tau}$ defined by \eqref{eq:Kbulk}.
\begin{theorem}\label{thm1}
Let $0<\epsilon<1/2$, $(W_{1},W_{2})\in\mc{W}_{N,\epsilon}$ and $N^{-1+2\epsilon}\leq t\leq N^{-\epsilon}$. Then for any $E\in(-2,2)$ there is a unique $\lambda_{E,t}=u_{E,t}+i\eta_{E,t}$ such that $\eta_{E,t}>0$ and
\begin{align}
    \lambda_{E,t}&=E+tm_{N}(\lambda_{E,t}).
\end{align}
Assume that 
\begin{align}
    \tau_{E}=\lim_{N\to\infty}\left[\beta_{N}(\lambda_{E,t})+N\tau_{N}\eta_{E,t}\Im m_{N}(\lambda_{E,t})\right]\label{eq:tauE}
\end{align}
exists and $\tau_{E}\in(0,\infty)$. Let $\rho^{(k)}_{N}$ be the $k$-point correlation function of $A_{t}$. Then for any $k\in\mbb{N}$, with $\rho^{(k)}_{\tau}$ as in \eqref{eq:rhoTau}, we have
\begin{align}
    \lim_{N\to\infty}\left(\frac{t}{N\eta_{E,t}}\right)^{2k}\rho^{(k)}_{N}\left(E+i\sqrt{\tau_{N}}\langle W_{2}\rangle+\frac{t\mbf{z}}{N\eta_{E,t}}\right)&=\rho^{(k)}_{\tau_{E}}(\mbf{z}),
\end{align}
uniformly in compact subsets of $\mbb{C}^{k}$.
\end{theorem}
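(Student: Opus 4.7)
The plan is to follow the Brézin--Hikami / Johansson strategy, adapted to complex non-Hermitian matrices via the partial Schur decomposition as in \cite{maltsev_bulk_2023}. I would write $\rho^{(k)}_N(\mbf{z})$ as an explicit integral over $k$ auxiliary complex parameters and analyse it by steepest descent around the saddle $\lambda_{E,t}$.

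The first step is the integral representation. Conditioning on $A$ and using that $\sqrt{t}B$ has the explicit Gaussian density of the elliptic Ginibre ensemble, the partial Schur decomposition $A_t=U\bigl((\mbf{z}+T)\oplus A'_t\bigr)U^{*}$ allows one to integrate out the Haar measure on $\mbf{U}_N$, the strictly upper-triangular block $T$, and the $(N-k)\times(N-k)$ minor $A'_t$. The result has the schematic form
\begin{align}
    \rho^{(k)}_N(\mbf{z})\;=\;C_{N,t}\!\int_{\mbb{C}^{k}}\!\det\bigl[\mc{K}_N(\lambda_j,z_l)\bigr]\prod_{j=1}^{k}\exp\bigl(-N|z_j-\lambda_j|^{2}/t+(\text{corrections from }W_2)\bigr)\,d\bs{\lambda},\notag
\end{align}
in which $\mc{K}_N$ is built from products of the resolvent $G_z=(W_1-z)^{-1}$ and of $W_2$, and the corrections in the exponent come from the imaginary part $i\sqrt{\tau_N}W_2$ of $A$; these corrections are what produces the shift $i\sqrt{\tau_N}\langle W_2\rangle$ in the rescaling of $\mbf{z}$.

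Second, I would rescale $z_j\mapsto E+i\sqrt{\tau_N}\langle W_2\rangle+tz_j/(N\eta_{E,t})$ and deform the $\lambda_j$ contours through the saddle. Setting the gradient of the leading exponent $N\bigl[-|\lambda-E|^{2}/(2t)+\Tr{\log(W_1-\lambda)}\bigr]$ to zero yields exactly $\lambda=E+tm_N(\lambda)$, so the saddle is $\lambda_{E,t}$; existence and uniqueness with $\eta_{E,t}\in[N^{-1+\epsilon},10]$ are guaranteed by \eqref{eq:C1.1}--\eqref{eq:C1.2} together with $t\geq N^{-1+2\epsilon}$, since \eqref{eq:C1.2} makes $\lambda\mapsto E+tm_N(\lambda)$ strictly contractive on the upper half-plane. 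Changing variables to $\lambda_j=\lambda_{E,t}+tw_j/(N\eta_{E,t})$ and expanding to quadratic order in $w$, the Gaussian integral that results has covariance built from $\alpha_N(\lambda_{E,t})$, $\beta_N(\lambda_{E,t})$ and $N\tau_N\eta_{E,t}\Im m_N(\lambda_{E,t})$; the combination \eqref{eq:tauE} is designed precisely so that the surviving variance is the scalar $\tau_E$. The bounded integral $\int_{-1}^{1}d\lambda$ in the limiting kernel then arises from a Fourier representation of the local spectral density of $W_1$, whose spectrum is supported in a compact interval by \eqref{eq:C0}.

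The main obstacle is rigorously bounding the truncation errors. The $m$-th order terms in the Taylor expansion of $\Tr{\log(W_1-\lambda)}$ and of the $W_2$-dependent factors are controlled by the traces $\Tr{\prod_{j=1}^{m}G_{z_j}\ac{W}_2}$ and their cumulant combinations, and the bound \eqref{eq:C3.2} is engineered with the exponent $m/2-1$ so that each additional $\ac{W}_2$ contributes a small factor on the scale $\eta\sim\eta_{E,t}\geq N^{-1+\epsilon}$. Truncating at $m=4n_\epsilon=4\lceil 48/\epsilon\rceil$, the remaining tail is uniformly negligible by \eqref{eq:C0}. Condition \eqref{eq:C3.1} captures the linear-in-$W_2$ correction absorbed into the shift $i\sqrt{\tau_N}\langle W_2\rangle$, \eqref{eq:C1.1} ensures non-degeneracy of the Hessian at the saddle, and \eqref{eq:C2} keeps $\tau_E$ bounded away from zero. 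With these estimates in place, the remaining Gaussian integral evaluates explicitly to $\det\bigl[K_{\tau_E}(z_j,z_l)\bigr]$, and uniformity in $\mbf{z}$ on compact subsets of $\mbb{C}^k$ follows because all error bounds depend only on the range of $\mbf{z}$.
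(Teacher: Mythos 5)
The crux of your argument is the representation in your first step, and that is exactly where it breaks down: for a \emph{fixed} non-Hermitian matrix $A$ perturbed by elliptic Ginibre noise, the conditional eigenvalue point process is not determinantal, and no Br\'ezin--Hikami/Johansson-type formula expressing $\rho^{(k)}_N$ as a $k$-fold integral over complex saddle parameters against a kernel $\det[\mc{K}_N(\lambda_j,z_l)]$ is available. What the partial Schur decomposition actually yields (and what the paper uses) is an integral over the Schur unit vectors $\mbf{v}_j\in S^{N-j}$, the off-diagonal vectors $\mbf{w}_j$ and the minor $M^{(k)}$, giving $\rho^{(k)}_N=c_{N,k}\int\prod_j K_j(z_j;A^{(j-1)})\,\Psi(\mbf{z};A^{(k)})\,d\nu_k\cdots d\nu_1$, where $\Psi$ is an expectation of a product of absolute squares of characteristic polynomials $\prod_j|\det(M^{(k)}_t-z_j)|^2$. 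This object is then evaluated not by a contour-integral steepest descent in $k$ complex variables but by a Grassmann/Hubbard--Stratonovich duality reducing $\Psi$ to an integral over $2k\times2k$ Hermitian matrices $Q$, a saddle analysis in $Q$ (including a sum over signatures), and a Harish-Chandra--Itzykson--Zuber integral over the coset $U(2k)/(U(k)\times U(k))$. In particular the limiting kernel's compact integral $\int_{-1}^{1}e^{-2\tau\lambda^2-i(z_j-\bar z_l)\lambda}d\lambda$ comes from the eigenvalues (in $[-1,1]$) of the coset parameter in that angular integral, not from a ``Fourier representation of the local spectral density of $W_1$'' or from \eqref{eq:C0}; your proposed mechanism for this term is wrong even at a heuristic level.

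A second substantive gap is that your scheme never confronts the dependence of the integrand on the Schur vectors through the minors $A^{(j)}$. In the paper this is a large part of the work: one must show that the quadratic forms $\mbf{v}_j^{*}G^{(j-1)}\mbf{v}_j$ concentrate under the measures $\nu_j$ (\Cref{lem:nuConc}), prove uniform upper bounds on $\wt{K}_j$ and $\wt{\Psi}$ to discard the complement of the good sets, and develop minor-resolvent norm and trace comparison estimates (\Cref{lem:minorresolvent}, \Cref{lem:normEstimates}, \Cref{lem:traceEstimates}, \Cref{lem:minornorms}) so that $A^{(j)}$ can be replaced by $A$ with controlled errors; it is here, in controlling determinants like $\det(1+i\sqrt{\tau_N}\mbf{G}_{\bs\lambda}(U^{*}LU\otimes\ac{W}_2))$ and the associated norms, that \eqref{eq:C3.1}--\eqref{eq:C3.2} are actually deployed, rather than in a Taylor expansion of $\Tr{\log(W_1-\lambda)}$ as you suggest. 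Your remarks on the saddle equation $\lambda=E+tm_N(\lambda)$ and on the role of \eqref{eq:tauE} are in the right spirit (the paper handles existence/uniqueness by citing the analysis of $\phi(\lambda)=(\lambda-E)^2/2t+N^{-1}\log\det(W_1-\lambda)$ from the Hermitian literature), but without a valid starting representation and without the concentration/minor-comparison machinery the proof cannot be carried out along the lines you describe.
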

We remark that by making minor adjustments to the proof one can dispense with the assumption in \eqref{eq:C3.2} if we assume that $t\geq N^{-\omega}$ for sufficiently small $\omega>0$. 

Recall that a Wigner matrix with atom distribution $\nu$ is the random Hermitian matrix whose diagonal entries are independent copies of $\nu/\sqrt{N}$ and whose off-diagonal entries have independent real and imaginary parts which are copies of $\nu/\sqrt{2N}$. We rely on the multi-resolvent local law proven by Cipolloni, Erd\H{o}s and Schr\"{o}der \cite{cipolloni_optimal_2022}, from which we obtain that pairs of independent Wigner matrices $W_{1}$ and $W_{2}$ belong to $\mc{W}_{N,\epsilon}$ with high probability. Using \Cref{thm1} and the reverse heat flow of \cite{erdos_bulk_2010} we can then deduce the following.
\begin{theorem}\label{thm2}
Let $\nu_{L}$ be a probability measure on $\mbb{R}$ such that
\begin{align}
    d\nu_{L}(x)&=e^{-V(x)-x^{2}}dx,\label{eq:nu1}
\end{align}
and
\begin{align}
    V(x)\geq-(1-\delta)x^{2},\quad\sum_{j=1}^{L}|V^{(j)}(x)|&\leq C(1+x^{2})^{l},\label{eq:nu2}
\end{align}
for some $\delta>0$ and $L,l\in\mbb{N}$. Let $W_{1},W_{2}$ be independent Wigner matrices with atom distribution $\nu$ and $A=W_{1}+i\sqrt{\tau_{N}}W_{2}$, with $\tau_{N}\in(0,1)$ a sequence such that
\begin{align}
    \pi\rho_{sc}(E)\lim_{N\to\infty}N\tau_{N}&=\tau_{E}\in(0,\infty),
\end{align}
where $E\in(-2,2)$. Then for any $k\in\mbb{N}$ there is an $L=L(k)$ such that
\begin{align}
    \lim_{N\to\infty}\mbb{E}\left[\sum_{i_{1}\neq\cdots\neq i_{k}}f\left(N\pi\rho_{sc}(E)(z_{i_{1}}-E),...,N\pi\rho_{sc}(E)(z_{i_{k}}-E)\right)\right]&=\int_{\mbb{C}^{k}}f(\mbf{z})\rho^{(k)}_{\tau_{E}}(\mbf{z})d\mbf{z},
\end{align}
where $z_{i}$ are the eigenvalues of $A$.
\end{theorem}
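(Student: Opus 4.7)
The plan is to combine \Cref{thm1} with the reverse heat flow of \cite{erdos_bulk_2010} to remove the Gaussian component. The strategy has three steps: (i) verify that pairs of independent Wigner matrices with sufficiently smooth atom distribution lie in $\mc{W}_{N,\epsilon}$ with overwhelming probability; (ii) apply \Cref{thm1} to the Gauss-divisible ensemble associated to such a pair; (iii) remove the Gaussian perturbation via the reverse heat flow.

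For step (i), \eqref{eq:C0} is the standard operator-norm estimate for Wigner matrices under the sub-Gaussian tail guaranteed by \eqref{eq:nu2}. Conditions \eqref{eq:C1.1} and \eqref{eq:C1.2} follow from the local semicircle law, since $m_N(z)$ concentrates around $m_{sc}(z)$ uniformly on $S_\epsilon$ and $m_{sc}$ enjoys the analogous properties in the bulk. Conditions \eqref{eq:C2}, \eqref{eq:C3.1} and \eqref{eq:C3.2} are precisely of the form provided by the multi-resolvent local law of \cite{cipolloni_optimal_2022} for Wigner matrices and traceless deterministic observables: conditional on $W_2$, their result gives $\left|\Tr{\prod_{j=1}^m G_{z_j}\ac{W}_2}\right| \leq N^{\xi}\eta^{1-m/2}$ on $S_\epsilon^m$ with overwhelming probability in $W_1$, yielding \eqref{eq:C3.2}; the isotropic single-resolvent local law gives \eqref{eq:C3.1}; and \eqref{eq:C2} follows from a direct Wigner covariance computation expressing $\mathbb{E}_{W_2}[\beta_N(z)]$ in terms of $\Im m_N(z)$, which is bounded above and below on $S_\epsilon$ under the hypothesis on $\tau_N$.

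For step (ii), the local semicircle law gives $\eta_{E,t} = t\pi\rho_{sc}(E)(1+o(1))$, so $t/(N\eta_{E,t}) \to 1/(N\pi\rho_{sc}(E))$ and the rescaling in \Cref{thm1} matches that of \Cref{thm2}; the Wigner structure moreover ensures that the bracket in \eqref{eq:tauE} converges to the $\tau_E$ of \Cref{thm2}. For step (iii), the reverse heat flow of \cite{erdos_bulk_2010} shows that, for any $\kappa > 0$ and $L = L(\kappa)$ sufficiently large, there is a probability measure $\wt\nu^{(t)}$ such that if $\wt W_1, \wt W_2$ are independent Wigner matrices with atom distribution $\wt\nu^{(t)}$ and $V_1, V_2$ are independent Gaussian Hermitian matrices, then $(\wt W_1 + \sqrt{t}V_1, \wt W_2 + \sqrt{t}V_2)$ agrees with $(W_1, W_2)$ jointly in distribution up to total variation error $N^{-\kappa}$. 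The algebraic identity $W_1 + i\sqrt{\tau_N}W_2 = \wt A + \sqrt{t}B$ (with $\wt A = \wt W_1 + i\sqrt{\tau_N}\wt W_2$ and $B = V_1 + i\sqrt{\tau_N}V_2$) then gives $A$ equal in distribution to $\wt A + \sqrt{t}B$ up to the same precision. Since the functional on the left of \Cref{thm2} grows at most polynomially in $N$, taking $\kappa$ large in terms of $k$ makes this substitution negligible, and step (ii) applied to $\wt A + \sqrt{t}B$ completes the proof.

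The main obstacle is establishing \eqref{eq:C3.2} uniformly on $S_\epsilon^m$ for $m$ as large as $4n_\epsilon \asymp \epsilon^{-1}$ and at the sharp scale $\eta \sim N^{-1+\epsilon}$. While the multi-resolvent local law of \cite{cipolloni_optimal_2022} is designed for exactly this kind of product, obtaining uniformity in $z$ requires a standard net-and-Lipschitz argument with the net fine enough to overcome the polynomial failure probability. The reverse heat flow step is, by contrast, essentially a black-box application of \cite{erdos_bulk_2010}: independence of $W_1$ and $W_2$ allows the construction to be applied separately to each component, and the non-Hermitian structure of $A$ enters only through the elementary algebraic identity used in step (iii).
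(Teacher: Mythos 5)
Your proposal follows essentially the same route as the paper: verify $(W_{1},W_{2})\in\mc{W}_{N,\epsilon}$ with overwhelming probability via the single- and multi-resolvent local laws of \cite{cipolloni_optimal_2022} (conditioning on $W_{2}$ and treating $\ac{W}_{2}$ as a traceless observable), apply \Cref{thm1} after matching $\eta_{E,t}\approx t\pi\rho_{sc}(E)$ and $\tau_{E,t}\to\tau_{E}$, and remove the Gaussian component by the reverse heat flow applied separately to $W_{1}$ and $W_{2}$ using independence. The only points you gloss over, both minor, are that \eqref{eq:C2} needs a high-probability statement (the paper uses the two-resolvent law plus concentration of $\langle\ac{W}_{2}^{2}\rangle$, not just $\mbb{E}_{W_{2}}[\beta_{N}]$) and that the recentering by $i\sqrt{\tau_{N}}\langle W_{2}\rangle$ in \Cref{thm1} must be shown negligible on the local scale, which the paper does via concentration of $\langle W_{2}\rangle$.
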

The claim that $(W_{1},W_{2})\in\mc{W}_{N,\epsilon}$ with high probability follows from the multi-resolvent local law if we assume that $W_{1}$ and $W_{2}$ are independent, $W_{1}$ is a Wigner matrix and $W_{2}$ is bounded in norm. Thus the smoothness conditions in \Cref{thm2} arise from our reliance on the method of time reversal. An alternative to this method would be to extend the four moment theorem of Tao and Vu \cite{tao_random_2015} to weakly non-Hermitian matrices.

\section{Preliminaries}
In this section we first state some auxilliary lemmas that are used in subsequent sections. The first lemma relates the resolvent of the projection of a matrix onto a subspace to the resolvent of the original matrix.
\begin{lemma}\label{lem:minorresolvent}
Let $A\in\mbf{M}_{N}$; $U=(U_{k},U_{N-k})\in\mbf{U}_{N}$, where $U_{k}$ and $U_{N-k}$ are $N\times k$ and $N\times(N-k)$ respectively; and $B=U_{N-k}^{*}AU_{N-k}$. Then
\begin{align}\label{eq:minorresolvent}
    U\begin{pmatrix}0&0\\0&B^{-1}\end{pmatrix}U^{*}&=A^{-1}-A^{-1}U_{k}(U_{k}^{*}A^{-1}U_{k})^{-1}U_{k}^{*}A^{-1},
\end{align}
and, if $\Re A>0$,
\begin{align}\label{eq:minornorm}
    \left\|A^{-1}U_{k}(U_{k}^{*}A^{-1}U_{k})^{-1}U_{k}^{*}A^{-1}\right\|&\leq\left\|(\Re A)^{-1}\right\|.
\end{align}
If $\Im A>0$ \eqref{eq:minornorm} holds with $\left\|(\Im A)^{-1}\right\|$ on the right hand side.
\end{lemma}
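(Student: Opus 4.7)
I prove the two assertions separately, with the norm bound \eqref{eq:minornorm} being the main obstacle. Let me write $M=A^{-1}U_{k}(U_{k}^{*}A^{-1}U_{k})^{-1}U_{k}^{*}A^{-1}$ for the matrix appearing in both parts of the lemma. The identity \eqref{eq:minorresolvent} is a direct consequence of block-matrix inversion: writing $\widetilde{A}=U^{*}AU$ as a $(k,N-k)$-block matrix with bottom-right block $B$, the Schur complement formula gives each block of $\widetilde{A}^{-1}$ in terms of $B^{-1}$ and $S=U_{k}^{*}AU_{k}-U_{k}^{*}AU_{N-k}B^{-1}U_{N-k}^{*}AU_{k}$. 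In particular $(\widetilde{A}^{-1})_{11}=S^{-1}$, so $U_{k}^{*}A^{-1}U_{k}=S^{-1}$, and a block-by-block comparison of $U^{*}(A^{-1}-M)U$ with $\diag(0,B^{-1})$ verifies \eqref{eq:minorresolvent}.

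For \eqref{eq:minornorm} under $\Re A>0$, I first reduce to the canonical case $\Re A=I$. Set $H=\Re A$, $\widetilde{A}=H^{-1/2}AH^{-1/2}=I+iJ$ with $J$ Hermitian, and $\widetilde{U}_{k}=H^{-1/2}U_{k}$. Substituting $A^{-1}=H^{-1/2}\widetilde{A}^{-1}H^{-1/2}$ into the definition of $M$ gives $M=H^{-1/2}\widetilde{M}H^{-1/2}$, where $\widetilde{M}$ has the same form with $A,U_{k}$ replaced by $\widetilde{A},\widetilde{U}_{k}$. A QR factorization $\widetilde{U}_{k}=QR$ lets the $R$ factors cancel, yielding
\begin{equation*}
\widetilde{M}=\widetilde{A}^{-1}Q(Q^{*}\widetilde{A}^{-1}Q)^{-1}Q^{*}\widetilde{A}^{-1}.
\end{equation*}
Since $\|M\|\leq\|H^{-1}\|\cdot\|\widetilde{M}\|=\|(\Re A)^{-1}\|\cdot\|\widetilde{M}\|$, it suffices to show $\|\widetilde{M}\|\leq 1$.

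This last step is the essential point. Write $D=\widetilde{A}^{-1}$ and $C=Q^{*}DQ$. The identities $\Re D=DD^{*}=D^{*}D$ hold (the first from $\Re\widetilde{A}=I$, the second from normality of $\widetilde{A}$), and a short calculation using both of them shows that $D^{-1}D^{*}$ is unitary. Setting $\widetilde{M}':=DQC^{-1}Q^{*}D^{*}$, one sees $\widetilde{M}'=\widetilde{M}\cdot D^{-1}D^{*}$, so $\|\widetilde{M}'\|=\|\widetilde{M}\|$. On the other hand, using $Q^{*}DD^{*}Q=\Re C$ and the identity $C^{-1}(\Re C)C^{-*}=\Re(C^{-1})$ gives
\begin{equation*}
\widetilde{M}\widetilde{M}^{*}=DQ\,\Re(C^{-1})\,Q^{*}D^{*}=\Re\widetilde{M}'.
\end{equation*}
Taking operator norms and using $\|\Re X\|\leq\|X\|$ yields $\|\widetilde{M}\|^{2}=\|\Re\widetilde{M}'\|\leq\|\widetilde{M}'\|=\|\widetilde{M}\|$, so $\|\widetilde{M}\|\leq 1$. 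Finally, the case $\Im A>0$ follows by applying the result just proved to $-iA$ (whose real part equals $\Im A$), noting that $M$ gets replaced by $iM$ and hence has the same norm.
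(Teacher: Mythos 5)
Your proof is correct. Note that the paper itself contains no argument for this lemma --- it defers the proof to Section~2 of \cite{maltsev_bulk_2023} --- so there is nothing in this text to compare against line by line; what you give is a valid, self-contained proof. The identity \eqref{eq:minorresolvent} via the Schur complement of $B$ in $U^{*}AU$ (using $U_{k}^{*}A^{-1}U_{k}=S^{-1}$ and a block-by-block comparison) is the standard route and all four blocks do match. For \eqref{eq:minornorm}, your reduction $A\mapsto H^{-1/2}AH^{-1/2}=1+iJ$ with $H=\Re A$, followed by absorbing the frame through a QR factorisation of $H^{-1/2}U_{k}$ (legitimate, since $H^{-1/2}U_{k}$ has full column rank), cleanly isolates the essential claim $\|\widetilde{M}\|\leq1$. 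The normality trick that finishes it checks out: with $D=(1+iJ)^{-1}$ one indeed has $\Re D=DD^{*}=D^{*}D$ and $D^{-1}D^{*}$ unitary, the identities $Q^{*}DD^{*}Q=\Re C$, $C^{-1}(\Re C)C^{-*}=\Re(C^{-1})$ and $\widetilde{M}'=\widetilde{M}D^{-1}D^{*}$ are all correct, and the chain $\|\widetilde{M}\|^{2}=\|\Re\widetilde{M}'\|\leq\|\widetilde{M}'\|=\|\widetilde{M}\|$ gives the bound. The $\Im A>0$ case via $-iA$ is also handled correctly, since $\Re(-iA)=\Im A$ and the bracketed matrix only picks up a factor of $i$. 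One small remark you could add for completeness: when $\Re A>0$ the invertibility of $U_{k}^{*}A^{-1}U_{k}$ (implicitly used throughout) is automatic, because $\Re(A^{-1})=A^{-1}(\Re A)A^{-*}>0$ and hence $\Re(U_{k}^{*}A^{-1}U_{k})>0$.
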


The following lemma deals with integrals over the Stiefel manifold $\mbf{U}_{N,k}:=\mbf{U}_{N}/\mbf{U}_{N-k}$.
\begin{lemma}\label{lem:sphericalint}
Let $f\in L^{1}(\mbb{C}^{N\times k})$ be continuous on a neighbourhood of $\mbf{U}_{N,k}$ and define $\hat{f}:\mbf{M}^{sa}_{k}\to\mbb{C}$ by
\begin{align}
    \hat{f}(H)&=\frac{1}{\pi^{k^{2}}}\int_{\mbb{C}^{N\times k}}e^{-i\tr HX^{*}X}f(X)dX.
\end{align}
Assume that $\hat{f}\in L^{1}(\mbf{M}^{sa}_{k})$; then
\begin{align}\label{eq:sphericalint}
    \int_{\mbf{U}_{N,k}}f(U)d\mu_{N,k}(U)&=\int_{\mbf{M}^{sa}_{k}}e^{i\tr H}\hat{f}(H)dH.
\end{align}
\end{lemma}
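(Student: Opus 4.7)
The identity is essentially Fourier inversion on $\mathbf{M}^{sa}_{k}$: the integral over $H$ should produce a delta function that restricts $X\in\mathbb{C}^{N\times k}$ to the Stiefel manifold $\{X^{*}X=I\}$. The plan is to substitute the definition of $\hat{f}$ into the right-hand side of \eqref{eq:sphericalint} and interchange the order of integration. Since the integrand $e^{i\tr H(I-X^{*}X)}f(X)$ is not absolutely integrable on $\mathbf{M}^{sa}_{k}\times\mathbb{C}^{N\times k}$, I would first insert a Gaussian regulator $e^{-\varepsilon\tr H^{2}}$ for $\varepsilon>0$; Fubini then applies thanks to $f\in L^{1}$ and the Gaussian decay in $H$, and I would send $\varepsilon\downarrow 0$ at the end, using the continuity of $f$ near $\mathbf{U}_{N,k}$ and the hypothesis $\hat f\in L^1$ to justify the limit.

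After the interchange, the inner $H$-integral becomes a standard Gaussian integral on the real $k^{2}$-dimensional space of Hermitian matrices,
\begin{align*}
\frac{1}{\pi^{k^{2}}}\int_{\mathbf{M}^{sa}_{k}}e^{-\varepsilon\tr H^{2}+i\tr H(I-X^{*}X)}dH,
\end{align*}
which can be evaluated explicitly by diagonalising the quadratic form in the real coordinates $(H_{ii},\Re H_{ij},\Im H_{ij})_{i<j}$. As $\varepsilon\to 0$ the result converges, as a distribution in $X$, to a constant multiple of $\delta(I-X^{*}X)$ with respect to the flat measure $dX$.

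The last step is to parametrise $\mathbb{C}^{N\times k}$ via the polar decomposition $X=UR^{1/2}$, with $R=X^{*}X$ positive Hermitian and $U\in\mathbf{U}_{N,k}$. The Jacobian is the standard factor proportional to $\det(R)^{N-k}$ (the complex Wishart change of variables), and the $U$-component supplies the $\mathbf{U}_{N}$-invariant measure. Integrating the delta function against the $R$-variable then pins $R=I$, and the remaining integral over $U$ is the left-hand side.

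The main obstacle is purely bookkeeping: the prefactor $\pi^{-k^{2}}$ in the definition of $\hat{f}$ is tuned exactly to cancel the combined constants from the Gaussian Fourier computation and the polar Jacobian, so the essential content of the proof is verifying that, with these constants accounted for, the push-forward of $c\cdot\delta(I-X^{*}X)\,dX$ to $\mathbf{U}_{N,k}$ coincides with the Haar-induced measure $\mu_{N,k}$ used on the left-hand side.
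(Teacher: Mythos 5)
The paper does not prove this lemma itself (it defers to Section 2 of \cite{maltsev_bulk_2023}), and your route — regularise with $e^{-\varepsilon\tr H^{2}}$, apply Fubini, evaluate the Gaussian $H$-integral to get a kernel concentrating on $\{X^{*}X=1\}$, and pass to the Stiefel manifold by the polar/Wishart change of variables — is the natural argument and almost certainly the one behind the cited proof. The Fubini step and the $\varepsilon\downarrow0$ limit on the $H$-side (dominated convergence using $\hat f\in L^{1}$) are fine as you describe them.

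The one place where your write-up stops short is exactly where the content of the lemma sits: you assert that the limiting functional $f\mapsto\lim_{\varepsilon\to0}\int f(X)K_{\varepsilon}(1-X^{*}X)\,dX$ is integration against $\mu_{N,k}$ with constant $1$, calling it bookkeeping, but this is not automatic and it is sensitive to normalisations. In particular $\mu_{N,k}$ here cannot be the Haar \emph{probability} measure: testing the identity with $f(X)=e^{-\tr X^{*}X}$ at $k=1$ forces $\mu_{N,1}(S^{2N-1})=2\pi^{N}/(N-1)!$, i.e.\ $\mu_{N,k}$ is the unnormalised surface (coarea) measure, consistent with the unnormalised $dS_{N-j}$ appearing in \Cref{lem:partialSchur}; moreover the Gaussian kernel on $\mbf{M}^{sa}_{k}$ with the prefactor $\pi^{-k^{2}}$ does not integrate to $1$, so a naive ``$\delta$-function of the constraint'' count of constants can silently go wrong. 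To close the gap without grinding through the Wishart Jacobian constant, you can argue as follows: the limiting functional is positive, supported on $\mbf{U}_{N,k}$ (via the polar parametrisation $X=UR^{1/2}$, $dX=c_{N,k}\det(R)^{N-k}dR\,d\mu_{N,k}(U)$, with continuity of $f$ near $\mbf{U}_{N,k}$ and $f\in L^{1}$ handling the region away from $R=1$, where the kernel is $O(e^{-c/\varepsilon})$), and it is invariant under $X\mapsto VX$, $V\in\mbf{U}_{N}$, since the kernel depends only on $X^{*}X$; hence by uniqueness of the invariant measure on $\mbf{U}_{N,k}$ it equals $c\,\mu_{N,k}$ for some $c>0$, and evaluating both sides of \eqref{eq:sphericalint} on the single test function $f(X)=e^{-\tr X^{*}X}$ (both sides computable in closed form, as in the $k=1$ check above) gives $c=1$. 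With that step made explicit the proof is complete; without it, the crucial identification you yourself call ``the essential content'' remains unproved.
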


Consider a matrix $M$ with distinct eigenvalues; then we have the partial Schur decomposition 
\begin{align}
    M&=U\begin{pmatrix}z_{1}&\mbf{w}_{1}^{*}V_{1}^{*}&&&\\0&z_{2}&\mbf{w}^{*}_{2}V_{2}^{*}&&\\&0&\ddots&\ddots\\&&\ddots&z_{k}&\mbf{w}_{k}^{*}\\&&&0&M^{(k)}\end{pmatrix}U^{*},\label{eq:partialSchur}
\end{align}
where
\begin{align*}
    U&=\prod_{j=1}^{k}\begin{pmatrix}1_{j-1}&0\\0&R(\mbf{v}_{j})\end{pmatrix},\\
    V_{j}&=\prod_{l=0}^{k-j-1}\begin{pmatrix}1_{k-j-l-1}&0\\0&R(\mbf{v}_{k-l})\end{pmatrix},\\
    R(\mbf{v})&=1-\mbf{v}\mbf{v}^{*},
\end{align*}
$\mbf{z}_{j}\in\mbb{C},\,\mbf{w}_{j}\in\mbb{C}^{N-j},\,\mbf{v}_{j}\in S^{N-j}_{+}=\{\mbf{v}\in S^{N-j}:v_{1}\geq0\}$ for $j=1,...,k$ and $M^{(k)}\in\mbf{M}_{N-k}$. The next lemma gives a formula for the $k$-point correlation function in terms of an integral over these variables.
\begin{lemma}\label{lem:partialSchur}
Let $M$ be a random matrix with density $\rho(M)$ with respect to the Lebesgue measure. Let $\rho^{(k)}_{N}(\mbf{z})$ denote the $k$-point function of $M$. Then we have
\begin{align}
    \rho^{(k)}_{N}(\mbf{z})&=\frac{\left|\Delta(\mbf{z})\right|^{2}}{(2\pi)^{k}}\int_{S^{N-1}}\int_{\mbb{C}^{N-1}}\cdots\int_{S^{N-k}}\int_{\mbb{C}^{N-k}}\int_{\mbf{M}_{N-k}}\rho(M(\mbf{v}_{1},\mbf{w}_{1},...,\mbf{v}_{k},\mbf{w}_{k},M^{(k)}))\nonumber\\
    &\left(\prod_{j=1}^{k}\left|\det(M^{(k)}-z_{j})\right|^{2}\right)dM^{(k)}dS_{N-k}(\mbf{v}_{k})d\mbf{w}_{k}\cdots dS_{N-1}(\mbf{v}_{1})d\mbf{w}_{1}.\label{eq:kpointintegral}
\end{align}
\end{lemma}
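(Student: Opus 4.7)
My plan is to obtain \eqref{eq:kpointintegral} by changing variables from $M\in\mbf{M}_{N}$ to the partial Schur coordinates $(\mbf{v}_{j},\mbf{w}_{j},z_{j},M^{(k)})$ in the definition
\begin{align*}
\int_{\mbb{C}^{k}}f(\mbf{z})\rho^{(k)}_{N}(\mbf{z})\,d\mbf{z}&=\int_{\mbf{M}_{N}}\rho(M)\sum_{i_{1}\neq\cdots\neq i_{k}}f(\lambda_{i_{1}},\ldots,\lambda_{i_{k}})\,dM,
\end{align*}
where $\lambda_{1},\ldots,\lambda_{N}$ are the eigenvalues of $M$ and the sum runs over ordered $k$-tuples of distinct indices. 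For a generic $M$ with simple spectrum, each such ordered $k$-tuple of eigenvalues determines a unique partial Schur decomposition of the form \eqref{eq:partialSchur} with $z_{j}=\lambda_{i_{j}}$ once the phase convention $\mbf{v}_{j}\in S^{N-j}_{+}$ is imposed. Hence the sum over ordered tuples becomes a single integral over the partial Schur parameter space, and it only remains to compute the Jacobian of this coordinate change.

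The core calculation is at $k=1$, after which the general case follows by iteration. Write $M=U_{1}M'U_{1}^{*}$ with $M'=\begin{pmatrix}z_{1}&\mbf{w}_{1}^{*}\\0&M^{(1)}\end{pmatrix}$ and $U_{1}$ a unitary matrix whose first column is $\mbf{v}_{1}$; then $dM=U_{1}\bigl(dM'+[\Omega,M']\bigr)U_{1}^{*}$ with $\Omega=U_{1}^{*}dU_{1}$ anti-Hermitian. Splitting $\Omega$ in $1{+}(N-1)$ block form, the scalar $(1,1)$ entry is the $U(1)$ phase fixed by $v_{1}\ge 0$ and supplies the factor $(2\pi)^{-1}$; the anti-Hermitian $(2,2)$ block generates unitary rotations of the complementary subspace that are absorbed by the unitary invariance of Lebesgue measure on $\mbf{M}_{N-1}$; and the off-diagonal entries $\omega_{12}$ parametrise the remaining tangent directions of $\mbf{v}_{1}$. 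A direct block expansion shows that the lower-left column of $[\Omega,M']$ equals $(M^{(1)}-z_{1})\omega_{12}^{*}$, and this is the only contribution to the corresponding column of $dM$. Viewing $\omega_{12}\mapsto(M^{(1)}-z_{1})\omega_{12}^{*}$ as a real linear map produces the Jacobian $|\det(M^{(1)}-z_{1})|^{2}$, yielding
\begin{align*}
dM&=\frac{1}{2\pi}|\det(M^{(1)}-z_{1})|^{2}\,d^{2}z_{1}\,dS_{N-1}(\mbf{v}_{1})\,d\mbf{w}_{1}\,dM^{(1)}.
\end{align*}

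Applying this identity iteratively to $M^{(j-1)}\in\mbf{M}_{N-j+1}$ to extract $z_{j}$ contributes, at step $j$, the factor $(2\pi)^{-1}|\det(M^{(j)}-z_{j})|^{2}$. Because $z_{j+1},\ldots,z_{k}$ lie among the eigenvalues of $M^{(j)}$, these determinants telescope as
\begin{align*}
\prod_{j=1}^{k}|\det(M^{(j)}-z_{j})|^{2}&=|\Delta(\mbf{z})|^{2}\prod_{j=1}^{k}|\det(M^{(k)}-z_{j})|^{2},
\end{align*}
and substituting back into the displayed identity for $\int f\rho^{(k)}_{N}$ reads off \eqref{eq:kpointintegral}. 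The main obstacle is the block-level infinitesimal analysis at $k=1$: one must carefully check that the $(2,2)$ block of $\Omega$ decouples from the Jacobian through unitary invariance of Lebesgue measure on $\mbf{M}_{N-1}$, and that the real Jacobian of the complex-antilinear map $\omega_{12}\mapsto(M^{(1)}-z_{1})\omega_{12}^{*}$ is indeed $|\det(M^{(1)}-z_{1})|^{2}$. Once $k=1$ is in hand, the iteration and the Vandermonde telescoping are purely algebraic.
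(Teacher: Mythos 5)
Your proposal is correct and follows essentially the same route as the proof the paper relies on (deferred to Section 2 of \cite{maltsev_bulk_2023}): an iterated one-step Schur change of variables with Jacobian $|\det(M^{(1)}-z)|^{2}$, the $U(1)$ phase accounting for the $(2\pi)^{-1}$ per step, the identification of ordered $k$-tuples of eigenvalues with the preimages of the decomposition, and the Vandermonde telescoping $\prod_{j}|\det(M^{(j)}-z_{j})|^{2}=|\Delta(\mbf{z})|^{2}\prod_{j}|\det(M^{(k)}-z_{j})|^{2}$. The only point to tighten is the decoupling of the $(2,2)$ block of $\Omega$, which is most cleanly seen from the triangular structure of the wedge product (the $\omega_{21}$ differentials are consumed by the $(2,1)$ block), but your invariance argument captures the same fact.
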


Finally, we recall Fischer's inequality \cite{fischer_uber_1908} for the determinant of positive semi-definite block matrices.
\begin{lemma}\label{lem:fischerineq}
Let $A$ be a positive semi-definite block matrix with diagonal blocks $A_{nn}$. Then
\begin{align}\label{eq:fischerineq}
    \det A&\leq\prod_{n}\det A_{nn}.
\end{align}
\end{lemma}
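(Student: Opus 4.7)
My plan is to prove Fischer's inequality by induction on the number of diagonal blocks, reducing to the case of two blocks via the Schur complement.

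First I would reduce to the two-block case. Partition the block structure as $A = \begin{pmatrix} A_{11} & B \\ B^{*} & C \end{pmatrix}$, where $A_{11}$ is the first diagonal block and $C$ is the PSD block matrix formed by the remaining blocks. If I establish $\det A \leq \det A_{11} \det C$, then applying the inductive hypothesis to $C$ (whose diagonal blocks are $A_{nn}$ for $n \geq 2$) yields the full inequality. The base case of a single block is trivial.

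Next I would handle the two-block case. By continuity of the determinant, it suffices to treat $A$ strictly positive definite (replace $A$ by $A + \epsilon I$ and let $\epsilon \downarrow 0$; since $A \succeq 0$ implies each $A_{nn} \succeq 0$ and the inequality is preserved under limits). With $A_{11}$ now invertible, the Schur complement identity gives
\begin{align*}
    \det A &= \det A_{11} \cdot \det(C - B^{*}A_{11}^{-1}B).
\end{align*}
Since $A \succ 0$, the Schur complement $S := C - B^{*}A_{11}^{-1}B$ is positive definite, and because $B^{*}A_{11}^{-1}B \succeq 0$ we have $S \preceq C$ in the Loewner order.

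The final step is to invoke monotonicity of the determinant on the cone of positive definite matrices, which gives $\det S \leq \det C$, and hence $\det A \leq \det A_{11} \det C$, closing the induction. There is no real obstacle here; the only technical point to be careful about is the reduction to positive definite $A$ so that Cholesky/Schur-complement manipulations are valid, but this is handled cleanly by the $\epsilon I$ perturbation argument together with the fact that both sides of \eqref{eq:fischerineq} are continuous functions of $A$.
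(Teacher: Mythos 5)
Your proof is correct and complete: the reduction to two blocks by induction, the $\epsilon I$ perturbation to reach the positive definite case, the Schur complement factorisation $\det A=\det A_{11}\det(C-B^{*}A_{11}^{-1}B)$, and the monotonicity of the determinant on the positive definite cone (via $0\prec S\preceq C$) all fit together without gaps. Note that the paper itself gives no proof of this lemma -- it is quoted as a classical result with a citation to Fischer's original article -- so there is nothing to compare against; your Schur-complement induction is the standard argument and is a perfectly valid way to establish \eqref{eq:fischerineq}.
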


The proofs of the first three lemmas can be found in Section 2 of \cite{maltsev_bulk_2023}.

\section{Properties of $\mc{W}_{N,\epsilon}$}
In this section we derive some properties of the class $\mc{W}_{N,\epsilon}$. In the statements of the following lemmas we fix a pair $(W_{1},W_{2})\in\mc{W}_{N,\epsilon}$ and a sequence $\tau_{N}=O(N^{-1})$. We consider the following elliptic matrix
\begin{align}
    \ac{A}&=W_{1}+i\sqrt{\tau_{N}}\ac{W}_{2},
\end{align}
where $\ac{W}_{2}=W_{2}-\langle W_{2}\rangle$. Note that in contrast to \eqref{eq:Aelliptic} we have taken the traceless part of $W_{2}$. Let $F_{z}:=(\ac{A}-z)^{-1}$. There are two other kinds of resolvents that we will need alongside $G_{z}$ and $F_{z}$. The first kind is a block matrix resolvent: 
\begin{align}
    \mbf{G}_{\mbf{z}}&:=(1_{2k}\otimes W_{1}-\mbf{z}\otimes1_{N})^{-1},\label{eq:mbG}\\
    \mbf{F}_{\mbf{z}}(L)&:=(1_{2k}\otimes W_{1}+i\sqrt{\tau_{N}}L\otimes \ac{W}_{2}-\mbf{z}\otimes1_{N})^{-1},\label{eq:mbF}
\end{align}
where $L\in\mbf{M}^{sa}_{2k}$. Note that $\mbf{G}_{\mbf{z}}$ is a block diagonal matrix whose diagonal blocks are $G_{z_{i}}$.

The second kind is the resolvent of the Hermitisation:
\begin{align}
    \mc{G}_{u}(\eta)&:=\begin{pmatrix}-i\eta&W_{1}-u\\W_{1}-u&-i\eta\end{pmatrix}^{-1},\label{eq:mcG}\\
    \mc{F}_{u}(\eta)&:=\begin{pmatrix}-i\eta&\ac{A}^{*}-u\\\ac{A}-u&-i\eta\end{pmatrix}^{-1}.\label{eq:mcF}
\end{align}
We also define
\begin{align}
    H_{u}(\eta)&:=\left(\eta^{2}+|\ac{A}-u|^{2}\right)^{-1},\label{eq:H}
\end{align}
which appears in the block representation of $\mc{F}_{u}(\eta)$:
\begin{align*}
    \mc{F}_{u}(\eta)&=\begin{pmatrix}i\eta H_{u}(\eta)&H_{u}(\eta)(\ac{A}^{*}-u)\\
    (\ac{A}-u)H_{u}(\eta)&\frac{1-(\ac{A}-u)H_{u}(\eta)(\ac{A}-u)}{\eta^{2}}\end{pmatrix}.
\end{align*}
Our convention is that the letter $F$ stands for resolvents of the elliptic matrix $\ac{A}$ and $G$ for resolvents of $W_{1}$; we will treat the former as a perturbation of the latter.

The following combinations of Pauli matrices are convenient to define:
\begin{align}
    \sigma_{\pm}&:=\frac{1\pm\sigma_{X}}{2},\\
    \rho&:=\frac{\sigma_{Z}+i\sigma_{Y}}{2}.
\end{align}
With $z=u+i\eta$ we can write
\begin{align}
    \mc{G}_{u}(\eta)&=\sigma_{+}\otimes G_{z}-\sigma_{-}\otimes G^{*}_{z},\\
    \mc{G}_{u}(\eta)(\sigma_{Y}\otimes\ac{W}_{2})&=i(\rho\otimes G^{*}_{z}\ac{W}_{2}+\rho^{*}\otimes G_{z}\ac{W}_{2}).
\end{align}

We first note that by contour integration \eqref{eq:C3.1} and \eqref{eq:C3.2} imply
\begin{align}
    \left|\Tr{G_{z}^{1+m}\ac{W}_{2}}\right|&\leq\frac{CN^{\epsilon/2}}{N\eta^{m+1/2}},\label{eq:C3.1'}\\
    \left|\Tr{\prod_{j=1}^{m}G^{1+m_{j}}_{z_{j}}\ac{W}_{2}}\right|&\leq\frac{C}{\eta^{\sum_{j}m_{j}+m/2-1}},\label{eq:C3.2'}
\end{align}
for $\eta>2N^{-1+\epsilon}$ and $m=2,...,4n_{\epsilon}$. Indeed, we can write
\begin{align*}
    G^{1+m}_{z}&=\frac{1}{2\pi i}\int_{\mc{C}}\frac{G_{w}}{(w-z)^{1+m}}dw,
\end{align*}
where $\mc{C}$ is a circle of radius $\eta/2$ centred at $z$. On this circle we have $|\Im w|\geq\eta/2>N^{-1+\epsilon}$ and we can apply \eqref{eq:C3.1} or \eqref{eq:C3.2}.

The main consequences of \eqref{eq:C0}, \eqref{eq:C3.1} and \eqref{eq:C3.2} are the following norm bounds.
\begin{lemma}\label{lem:normBound}
For any $m=1,...,n_{\epsilon}$ and $z_{j}\in S_{\epsilon}$, we have
\begin{align}
    \left\|\big(\prod_{j=1}^{m}G_{z_{j}}\ac{W}_{2}\big)G_{z_{m+1}}\right\|&\leq\frac{C(N\eta)^{m/2n_{\epsilon}}}{\eta^{m/2+1}},\label{eq:productNorm1}
\end{align}
where $\eta=\min_{j}|\Im z_{j}|$.
\end{lemma}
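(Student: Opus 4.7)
The plan is to bound $\|X\|$ with $X := \bigl(\prod_{j=1}^m G_{z_j}\ac{W}_2\bigr) G_{z_{m+1}}$ via the Schatten-norm inequality $\|X\|^{2p} \leq \tr[(X^*X)^p]$ for a suitably chosen integer $p$, and then to expand the trace on the right into a sum of products to which \eqref{eq:C3.2} can be applied.

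First I would write
\begin{align*}
X^*X = G_{z_{m+1}}^*\ac{W}_2 G_{z_m}^*\ac{W}_2\cdots \ac{W}_2 G_{z_1}^*G_{z_1}\ac{W}_2 G_{z_2}\ac{W}_2\cdots \ac{W}_2 G_{z_{m+1}},
\end{align*}
which is a product of $2(m+1)$ resolvents interspersed with $2m$ factors of $\ac{W}_2$, with a single pair $G_{z_1}^*G_{z_1}$ not separated by a $\ac{W}_2$. In the cyclic trace $\tr[(X^*X)^p]$, cyclicity lets us move the final $G_{z_{m+1}}$ of each period next to the initial $G_{z_{m+1}}^*$ of the next, yielding a second unseparated pair $G_{z_{m+1}}G_{z_{m+1}}^*$ per period, so there are $2p$ such pairs in total.

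Next I would apply the identity $G_z^*G_z = G_zG_z^* = (G_z - G_z^*)/(2i\Im z)$, which holds because $G_z$ is a normal operator, to each of these $2p$ pairs. This drops the resolvent count from $2p(m+1)$ to $2pm$, introduces a scalar prefactor bounded in absolute value by $(2\eta)^{-2p}$ (using $|\Im z_1|,\,|\Im z_{m+1}|\geq\eta$), and expands the trace into a sum of $2^{2p}$ terms, each of the form $\tr[\prod_{j=1}^{2pm} G_{w_j}\ac{W}_2]$ with $w_j \in \{z_i, \bar{z}_i\}$ and $|\Im w_j|\geq\eta$. Provided $2pm \leq 4n_{\epsilon}$, \eqref{eq:C3.2} bounds each such trace by $CN/\eta^{pm-1}$. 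Summing the $2^{2p}$ terms and multiplying by the prefactor (the $2^{2p}$ factors cancel) yields
\begin{align*}
\tr[(X^*X)^p] \leq \frac{CN}{\eta^{p(m+2)-1}}, \qquad \|X\| \leq C^{1/(2p)}\frac{(N\eta)^{1/(2p)}}{\eta^{m/2+1}}.
\end{align*}

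Finally I would choose $p = \lceil n_{\epsilon}/m\rceil$, which satisfies $p \leq 2n_{\epsilon}/m$ for $1\leq m\leq n_{\epsilon}$ so that \eqref{eq:C3.2} applies, and $1/(2p) \leq m/(2n_{\epsilon})$. Since $N\eta \geq N^{\epsilon} \geq 1$ on $S_{\epsilon}$, we have $(N\eta)^{1/(2p)} \leq (N\eta)^{m/(2n_{\epsilon})}$, giving the claimed bound. The main technical obstacle is the cyclic-trace bookkeeping: correctly locating exactly two unseparated resolvent pairs per period (one inside each $X^*X$ and one at the period boundary) and checking that the $2^{2p}$ terms from expanding $G_z - G_z^*$ combine cleanly with the $(2\eta)^{-2p}$ prefactor to leave only the desired power of $\eta$. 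Once this combinatorics is settled, the application of \eqref{eq:C3.2} and the optimisation in $p$ are routine.
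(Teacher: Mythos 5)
Your proposal is correct and follows essentially the same route as the paper: bound the operator norm by a Schatten-type trace power $(\tr(X^*X)^p)^{1/2p}$, use the Ward-type identity $G_zG_z^*=(G_z-G_z^*)/(2i\Im z)$ on the two unseparated resolvent pairs per period to reduce to alternating products of resolvents and $\ac{W}_2$, apply \eqref{eq:C3.2} with $2pm\leq 4n_\epsilon$, and optimise over $p\asymp n_\epsilon/m$. The only cosmetic differences are that the paper keeps $\Im G_z$ inside the trace (rather than expanding into $2^{2p}$ terms) and takes $n=\lfloor 2n_\epsilon/m\rfloor$ instead of $\lceil n_\epsilon/m\rceil$, which leads to the same final exponent.
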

\begin{proof}
We will assume that $z_{j}=z$ for all $j$ to ease notation; the general case is proven in exactly the same way. Using $\|A\|=\sqrt{\|AA^{*}\|}\leq(\tr(AA^{*})^{n})^{1/2n}$, we find for any $n\in\mbb{N}$
\begin{align*}
    \|(G_{z}\ac{W}_{2})^{m}G_{z}\|&\leq\Big(\tr\big((G_{z}\ac{W}_{2})^{m}G_{z}G^{*}_{z}(\ac{W}_{2}G^{*}_{z})^{m}\big)^{n}\Big)^{1/2n}\\
    &=\frac{1}{\eta}\Big(\tr\big(\Im(G_{z})\ac{W}_{2}(G_{z}\ac{W}_{2})^{m-1}\Im(G_{z})\ac{W}_{2}(G^{*}_{z}\ac{W}_{2})^{m-1}\big)^{n}\Big)^{1/2n}.
\end{align*}
The trace involves $2nm$ resolvents; choosing $n=\lfloor 2n_{\epsilon}/m\rfloor$ we have $2nm\leq4n_{\epsilon}$ and so we can apply \eqref{eq:C3.2} to obtain
\begin{align*}
    \|(G_{z}\ac{W}_{2})^{m}G_{z}\|&\leq\left(\frac{c_{3}N}{\eta^{n(m+2)-1}}\right)^{1/2n}\\
    &\leq\frac{C(N\eta)^{1/2n}}{\eta^{m/2+1}}.
\end{align*}
Since $n\geq 2n_{\epsilon}/m-1\geq n_{\epsilon}/m$ we obtain \eqref{eq:productNorm1}.
\end{proof}

From this norm bound we derive the following estimates for the norms of resolvents.
\begin{lemma}[Norm estimates]\label{lem:normEstimates}
For any $z=u+i\eta\in S_{\epsilon}$ we have
\begin{align}
    \|\mc{G}_{u}(\eta)(\sigma_{Y}\otimes\ac{W}_{2})\mc{F}_{u}(\eta)\|&\leq\frac{C(N\eta)^{1/2n_{\epsilon}}}{\eta^{3/2}},\label{eq:mcGWmcFnorm}\\
    \|(\mc{G}_{u}(\eta)(\sigma_{Y}\otimes\ac{W}_{2}))^{2}\mc{F}_{u}(\eta)\|&\leq\frac{C(N\eta)^{1/n_{\epsilon}}}{\eta^{2}},\label{eq:mcGW2mcFnorm}\\
    \|\mc{F}_{u}(\eta)(1\otimes\ac{W}_{2})\mc{F}^{*}_{u}(\eta)\|&\leq\frac{C(N\eta)^{1/2n_{\epsilon}}}{\eta^{3/2}}.\label{eq:mcFWmcFnorm}
\end{align}
For any $\mbf{z}\in S^{2k}_{\epsilon}$ and $L\in\mbf{M}^{sa}_{2k}$ we have
\begin{align}
    \|\mbf{F}_{\mbf{z}}(L)\|&\leq\frac{C}{\eta},\label{eq:mbfNorm}\\
    \|\mbf{G}_{\mbf{z}}(L\otimes\ac{W}_{2})\mbf{F}_{\mbf{z}}(L)\|&\leq\frac{C(N\eta)^{1/2n_{\epsilon}}}{\eta^{3/2}},\label{eq:mbfGWmbfFnorm}
\end{align}
where $\eta=\min|\Im z_{j}|$.
\end{lemma}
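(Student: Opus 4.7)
The plan is to establish all five norm estimates by the same recipe: truncate a Neumann expansion of the perturbed resolvents $\mc{F}_{u}(\eta)$ and $\mbf{F}_{\mbf{z}}(L)$ around the unperturbed $\mc{G}_{u}(\eta)$ and $\mbf{G}_{\mbf{z}}$, and bound the resulting resolvent products using a mild extension of \Cref{lem:normBound}. The relevant identities, coming from $\mc{F}^{-1}-\mc{G}^{-1}=\sqrt{\tau_{N}}\,\sigma_{Y}\otimes\ac{W}_{2}$ and $\mbf{F}_{\mbf{z}}(L)^{-1}-\mbf{G}_{\mbf{z}}^{-1}=i\sqrt{\tau_{N}}\,L\otimes\ac{W}_{2}$, read
\[
\mc{F}=\mc{G}-\sqrt{\tau_{N}}\,\mc{G}(\sigma_{Y}\otimes\ac{W}_{2})\mc{F},\qquad \mbf{F}_{\mbf{z}}(L)=\mbf{G}_{\mbf{z}}-i\sqrt{\tau_{N}}\,\mbf{G}_{\mbf{z}}(L\otimes\ac{W}_{2})\mbf{F}_{\mbf{z}}(L).
\]

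The first thing I would prove is an extension of \Cref{lem:normBound} to mixed products: for any signs $s_{0},\ldots,s_{m}\in\{+,-\}$ (writing $G^{+}=G_{z}$, $G^{-}=G_{z}^{*}$), the same bound
\[
\left\|G^{s_{0}}\ac{W}_{2}G^{s_{1}}\ac{W}_{2}\cdots\ac{W}_{2}G^{s_{m}}\right\|\leq\frac{C(N\eta)^{m/(2n_{\epsilon})}}{\eta^{m/2+1}}
\]
holds. The proof copies that of \Cref{lem:normBound}: applying $\|A\|^{2n}\leq\tr(AA^{*})^{n}$ and collapsing the pair $G^{s_{m}}G^{-s_{m}}=\Im G_{z}/\eta$ in the middle of $AA^{*}$, together with $G^{-s_{0}}G^{s_{0}}=\Im G_{z}/\eta$ at the cyclic join between copies of $AA^{*}$, reduces $\tr(AA^{*})^{n}$ to $\eta^{-2n}$ multiplied by a trace over a cyclic word of $2mn$ alternating $G^{s}\ac{W}_{2}$ factors (after expanding the $2n$ occurrences of $\Im G_{z}$), each controlled by \eqref{eq:C3.2} with the choice $n=\lfloor 2n_{\epsilon}/m\rfloor$.

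With this in hand, the assembly of all five bounds is parallel. Iterating the first identity $n_{\epsilon}$ times gives
\[
\mc{G}V\mc{F}=\sum_{j=1}^{n_{\epsilon}}(-\sqrt{\tau_{N}})^{j-1}\mc{G}(V\mc{G})^{j}+(-\sqrt{\tau_{N}})^{n_{\epsilon}}\mc{G}(V\mc{G})^{n_{\epsilon}}V\mc{F}
\]
with $V=\sigma_{Y}\otimes\ac{W}_{2}$, and analogously for $\mbf{G}V\mbf{F}$. Using $\mc{G}=\sigma_{+}\otimes G_{z}-\sigma_{-}\otimes G_{z}^{*}$ and the block-diagonal structure of $\mbf{G}$, each $\mc{G}(V\mc{G})^{j}$ (resp.\ $\mbf{G}(V\mbf{G})^{j}$) unfolds into a block matrix whose entries are mixed products bounded by the previous step, yielding norm at most $C(N\eta)^{j/(2n_{\epsilon})}/\eta^{j/2+1}$. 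Successive orders differ by a ratio $O(\sqrt{\tau_{N}/\eta}\,(N\eta)^{1/(2n_{\epsilon})})=O((N\eta)^{-1/2+1/(2n_{\epsilon})})=o(1)$, so the lowest surviving order dominates: $j=1$ for \eqref{eq:mcGWmcFnorm} and \eqref{eq:mbfGWmbfFnorm}; $j=2$ for \eqref{eq:mcGW2mcFnorm}; and, after expanding both $\mc{F}$ and $\mc{F}^{*}$, the $(0,0)$ term $\mc{G}(1\otimes\ac{W}_{2})\mc{G}^{*}=\sigma_{+}\otimes G_{z}\ac{W}_{2}G_{z}^{*}+\sigma_{-}\otimes G_{z}^{*}\ac{W}_{2}G_{z}$ for \eqref{eq:mcFWmcFnorm}. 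The truncation remainders are absorbed using the a priori bounds $\|\mc{G}\|,\|\mc{F}\|\leq 1/\eta$ (each coming from $\Im(\text{inverse})=-\eta I$) multiplied by the decisive factor $\tau_{N}^{n_{\epsilon}/2}\leq N^{-n_{\epsilon}/2}$. Finally \eqref{eq:mbfNorm} is obtained by the same expansion applied directly to $\mbf{F}_{\mbf{z}}(L)$, dominated by the $j=0$ term $\|\mbf{G}\|\leq 1/\eta$. The main obstacle I anticipate is closing this last argument self-consistently: bounding the remainder of the expansion for $\mbf{F}$ requires an a priori bound of the claimed form, so a bootstrap is needed, and one must carefully control the dependence on $k$ and $\|L\|$ in the combinatorial $(2k)\times(2k)$ block sums arising from $L\otimes\ac{W}_{2}$.
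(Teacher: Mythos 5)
Your proposal is correct and follows essentially the same route as the paper: expand $\mc{F}_{u}(\eta)$ and $\mbf{F}_{\mbf{z}}(L)$ in powers of $D=-\sqrt{\tau_{N}}\mc{G}_{u}(\eta)(\sigma_{Y}\otimes\ac{W}_{2})$ (resp.\ $D=-i\sqrt{\tau_{N}}\mbf{G}_{\mbf{z}}(L\otimes\ac{W}_{2})$) up to order $n_{\epsilon}$, bound each term through \Cref{lem:normBound}, and absorb the remainder using the factor $\tau_{N}^{n_{\epsilon}/2}$. Two small remarks: the ``mixed'' extension of \Cref{lem:normBound} you set up is not actually needed, since $G_{z}^{*}=G_{\bar z}$ and $\bar z\in S_{\epsilon}$ whenever $z\in S_{\epsilon}$, so the lemma as stated already covers alternating $G_{z}/G_{z}^{*}$ strings (the paper instead reduces to it via the explicit Pauli-algebra formulas for $D^{2n}$ and $D^{2n+1}$); and the obstacle you flag for \eqref{eq:mbfNorm} is not a genuine one: the bound $\|D^{n_{\epsilon}}\|\leq CN^{-47/2}<1$ already gives invertibility of $1-D$, hence $\mbf{F}_{\mbf{z}}(L)=(1-D)^{-1}\mbf{G}_{\mbf{z}}$ exists and every self-consistent absorption is legitimate, with no bootstrap; the paper simply proves \eqref{eq:mbfGWmbfFnorm} first and then deduces \eqref{eq:mbfNorm} from the resolvent identity, and the $k$- and $L$-dependence is harmless because $D^{n}$ is a sum of only $O(k^{n})=O(1)$ block terms and $\|L\|=O(1)$ in all applications.
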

\begin{proof}
Let $D=-\sqrt{\tau_{N}}\mc{G}_{u}(\eta)(\sigma_{Y}\otimes\ac{W}_{2})$. The matrices $\rho$ and $\sigma_{\pm}$ satisfy
\begin{align}
    \rho^{*}\rho&=\sigma_{+},\quad\rho\rho^{*}=\sigma_{-},\quad\rho^{2}=\sigma_{+}\sigma_{-}=0,\quad\rho=\rho\sigma_{+}=\sigma_{-}\rho.\label{eq:relations}
\end{align}
From these it follows that
\begin{align}
    D^{2n}&=\tau_{N}^{n}\left(\sigma_{+}\otimes(G_{z}\ac{W}_{2}G^{*}_{z}\ac{W}_{2})^{n}+\sigma_{-}\otimes(G^{*}_{z}\ac{W}_{2}G_{z}\ac{W}_{2})^{n}\right),\label{eq:D2n}\\
    D^{2n+1}&=-\tau_{N}^{n+1/2}\left(\rho\otimes(G^{*}_{z}\ac{W}_{2}G_{z}\ac{W}_{2})^{n}G^{*}_{z}\ac{W}_{2}+\rho^{*}\otimes(G_{z}\ac{W}_{2}G^{*}_{z}\ac{W}_{2})^{n}G_{z}\right)\label{eq:D2n+1}.
\end{align}
Using \eqref{eq:productNorm1} and \eqref{eq:C0} we obtain
\begin{align*}
    \|D^{n}\|&\leq\frac{C\tau_{N}^{n/2}(N\eta)^{(n-1)/2n_{\epsilon}}\|\ac{W}_{2}\|}{\eta^{(n+1)/2}}\\
    &\leq\frac{C\sqrt{N}}{(N\eta)^{\frac{n}{2}\left(1-\frac{1}{n_{\epsilon}}\right)+\frac{1}{2}\left(1+\frac{1}{n_{\epsilon}}\right)}}\\
    &\leq\frac{C}{N^{\frac{\epsilon(n+1)-1}{2}}},
\end{align*}
since $\tau_{N}=O(N^{-1})$ and $\eta>N^{-1+\epsilon}$. In particular, for $n=n_{\epsilon}\geq48/\epsilon$ we have
\begin{align}
    \|D^{n_{\epsilon}}\|&\leq\frac{C}{N^{47/2}}.\label{eq:Dnepsilon}
\end{align}
Therefore we can invert $1-D$ and write
\begin{align*}
    \|\mc{G}_{u}(\eta)(\sigma_{Y}\otimes\ac{W}_{2})\mc{F}_{u}(\eta)\|&=\frac{1}{\sqrt{\tau_{N}}}\|D\mc{F}_{u}(\eta)\|\\
    &\leq\frac{1}{\sqrt{\tau_{N}}}\sum_{m=0}^{n_{\epsilon}-1}\|D^{m+1}\mc{G}_{u}(\eta)\|+\frac{\|D^{n_{\epsilon}}\|\cdot\|D\mc{F}_{u}(\eta)\|}{\sqrt{\tau_{N}}}.
\end{align*}
By \eqref{eq:productNorm1} again we have
\begin{align*}
    \frac{1}{\sqrt{\tau_{N}}}\|D^{m+1}\mc{G}_{u}(\eta)\|&\leq\frac{C(N\eta)^{1/2n_{\epsilon}}}{\eta^{3/2}}\left(\frac{(N\eta)^{1/n_{\epsilon}}\tau_{N}}{\eta}\right)^{\frac{m}{2}},
\end{align*}
from which \eqref{eq:mcGWmcFnorm} follows. The bound in \eqref{eq:mcGW2mcFnorm} is proven in exactly the same way.

For \eqref{eq:mcGWmcFnorm} we write
\begin{align*}
    \mc{F}_{u}(\eta)(1\otimes\ac{W}_{2})\mc{F}^{*}_{u}(\eta)&=\sum_{m_{1},m_{2}=0}^{n_{\epsilon}-1}D^{m_{1}}\mc{G}_{u}(\eta)(1\otimes\ac{W}_{2})\mc{G}^{*}_{u}(\eta)(D^{*})^{m_{2}}\\&+2\Re(D^{n_{\epsilon}}\mc{F}_{u}(\eta)(1\otimes\ac{W}_{2})\mc{F}^{*}_{u}(\eta))-D^{n_{\epsilon}}\mc{F}_{u}(\eta)(1\otimes\ac{W}_{2})\mc{F}^{*}_{u}(\eta)(D^{*})^{n_{\epsilon}}.
\end{align*}
From this we deduce
\begin{align*}
    \left\|\mc{F}_{u}(\eta)(1\otimes\ac{W}_{2})\mc{F}^{*}_{u}(\eta)\right\|&\leq\frac{1}{1-2\|D^{n_{\epsilon}}\|-\|D^{n_{\epsilon}}\|^{2}}\sum_{m_{1},m_{2}=0}^{n_{\epsilon}-1}\left\|D^{m_{1}}\mc{G}_{u}(\eta)(1\otimes\ac{W}_{2})\mc{G}^{*}_{u}(\eta)(D^{*})^{m_{1}}\right\|.
\end{align*}
To each term in the sum we apply \eqref{eq:productNorm1}:
\begin{align*}
    \|D^{m_{1}}\mc{G}_{u}(\eta)(1\otimes\ac{W}_{2})\mc{G}^{*}_{u}(\eta)(D^{*})^{m_{2}}\|&\leq\frac{C(N\eta)^{1/2n_{\epsilon}}}{\eta^{3/2}}\left(\frac{(N\eta)^{1/n_{\epsilon}}\tau_{N}}{\eta}\right)^{\frac{m}{2}}.
\end{align*}

Now let $D=-i\sqrt{\tau_{N}}\mbf{G}_{\mbf{z}}(L\otimes\ac{W}_{2})$; since $D^{n}$ is a sum of $O(k^{n})=O(1)$ terms we have by \eqref{eq:C0} and \eqref{eq:productNorm1} as before 
\begin{align*}
    \|D^{n}\|&\leq\frac{C}{N^{\frac{\epsilon(n+1)-1}{2}}},
\end{align*}
The bound \eqref{eq:mbfGWmbfFnorm} now follows in the same way as \eqref{eq:mcGWmcFnorm}. Finally, we note that by the resolvent identity and \eqref{eq:mbfGWmbfFnorm} we have
\begin{align*}
    \|\mbf{F}_{\mbf{z}}(L)\|&\leq\|\mbf{G}_{\mbf{z}}\|+\sqrt{\tau_{N}}\|\mbf{G}_{\mbf{z}}(L\otimes\ac{W}_{2})\mbf{F}_{\mbf{z}}(L)\|\\
    &\leq\frac{1}{\eta}\left(1+\frac{C}{(N\eta)^{\frac{1}{2}\left(1-\frac{1}{n_{\epsilon}}\right)}}\right)\\
    &\leq\frac{C}{\eta}.
\end{align*}
\end{proof}

We will need to estimate traces of monomials in $\mc{F}$ or $\mbf{F}$ and $\ac{W}_{2}$, which we do by the following lemma.
\begin{lemma}[Trace estimates]\label{lem:traceEstimates}
Let $z\in\mbb{C}$ such that $u=\Re z$ and $\eta=|\Im z|>2N^{-1+\epsilon}$. For any monomial $P$ with degrees $p,q$ and $r$ in $\mc{F}_{u}(\eta),\,\mc{F}^{*}_{u}(\eta)$ and $1\otimes\ac{W}_{2}$ respectively, such that $r\leq p+q\leq 8$ and in which $1\otimes\ac{W}_{2}$ does not appear consecutively, 
\begin{align}
    \langle P\rangle&=\langle Q\rangle+O\left(\frac{1}{N^{1/2}\eta^{p+q-r/2-1/2}}\right),\label{eq:traceEstimate}
\end{align}
where $Q$ is the monomial obtained by replacing $\mc{F}_{u}(\eta)$ and $\mc{F}^{*}_{u}(\eta)$ with $\mc{G}_{u}(\eta)$ and $\mc{G}^{*}_{u}(\eta)$ respectively. For any $\mbf{R}\in\mbf{M}^{sa}_{2k}$ and $\mbf{z}\in \mbb{C}^{2k}$ such that $\eta=\min|\Im z_{j}|>2N^{-1+\epsilon}$, let $P$ be a monomial with degrees $p,q$ and $r$ in $\mbf{F}_{\mbf{z}}(L),\,\mbf{F}^{*}_{\mbf{z}}(L)$ and $\mbf{R}$ respectively such that $r\leq p+q\leq4$. Then we have
\begin{align}
    \langle P\rangle&=\langle Q\rangle+O\left(\frac{\|\mbf{R}\|^{r}}{N\eta^{p+q}}\right),\label{eq:traceEstimate2}
\end{align}
where $Q$ is the monomial obtained by replacing $\mbf{F}_{\mbf{z}}(L)$ and $\mbf{F}^{*}_{\mbf{z}}(L)$ by $\mbf{G}_{\mbf{z}}$ and $\mbf{G}^{*}_{\mbf{z}}$ respectively.
\end{lemma}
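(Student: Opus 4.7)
The plan is to prove both estimates by iterating a resolvent identity that expresses $\mc{F}$ (respectively $\mbf{F}$) as $\mc{G}$ (respectively $\mbf{G}$) plus a perturbation of order $\sqrt{\tau_{N}}$. For \eqref{eq:traceEstimate} I would use
$$
\mc{F}_{u}(\eta)=\mc{G}_{u}(\eta)-i\sqrt{\tau_{N}}\,\mc{G}_{u}(\eta)(\sigma_{Y}\otimes\ac{W}_{2})\,\mc{F}_{u}(\eta)
$$
(together with its adjoint), iterated up to $n_{\epsilon}$ times at each occurrence of $\mc{F}$ or $\mc{F}^{*}$ in $P$. This produces a finite expansion $P=\sum_{k\geq 0}P_{k}+R$, where $P_{k}$ collects the terms carrying exactly $k$ inserted factors of $\sqrt{\tau_{N}}(\sigma_{Y}\otimes\ac{W}_{2})$ sandwiched between $\mc{G}$'s, and $R$ still contains at least one factor of the form $(\sqrt{\tau_{N}}\mc{G}(\sigma_{Y}\otimes\ac{W}_{2}))^{n_{\epsilon}}$. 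By construction $P_{0}=Q$, and one checks that no two $\ac{W}_{2}$'s are ever adjacent in any $P_{k}$: each inserted $\sigma_{Y}\otimes\ac{W}_{2}$ is wedged between a $\mc{G}$ and a resolvent, and by hypothesis $P$ already excludes adjacent $1\otimes\ac{W}_{2}$'s.

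For $k\geq 1$, each term in $\langle P_{k}\rangle$ is $(-i\sqrt{\tau_{N}})^{k}$ times the normalized trace of a product of $p+q+k$ matrices from $\{\mc{G}_{u}(\eta),\mc{G}^{*}_{u}(\eta)\}$, $r$ copies of $1\otimes\ac{W}_{2}$ and $k$ copies of $\sigma_{Y}\otimes\ac{W}_{2}$. Expanding $\mc{G}_{u}(\eta)=\sigma_{+}\otimes G_{z}-\sigma_{-}\otimes G^{*}_{z}$ and collapsing the $2\times 2$ block trace reduces this to a bounded sum of scalar traces of the form $\langle\prod_{j}G_{\sharp z}^{1+m_{j}}\ac{W}_{2}\rangle$, with $\sharp z\in\{z,\bar z\}$, $r+k\geq 1$ groups, and $\sum_{j}(1+m_{j})=p+q+k$, so $\sum_{j}m_{j}=p+q-r$. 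Applying \eqref{eq:C3.2'} when $r+k\geq 2$, or \eqref{eq:C3.1'} when $r+k=1$, bounds each such trace by $C/\eta^{p+q-r/2-1+k/2}$; combining with $\tau_{N}^{k/2}\leq CN^{-k/2}$ and $N\eta\geq N^{\epsilon}\geq 1$ gives
$$
|\langle P_{k}\rangle|\leq\frac{C}{(N\eta)^{(k-1)/2}N^{1/2}\eta^{p+q-r/2-1/2}}\leq\frac{C}{N^{1/2}\eta^{p+q-r/2-1/2}}
$$
for every $k\geq 1$. The remainder $R$ is controlled by $\|(\sqrt{\tau_{N}}\mc{G}(\sigma_{Y}\otimes\ac{W}_{2}))^{n_{\epsilon}}\|\leq CN^{-(n_{\epsilon}\epsilon-1)/2}$ from the proof of \Cref{lem:normEstimates}, which makes $\langle R\rangle$ smaller than any required inverse power of $N$.

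For \eqref{eq:traceEstimate2} the same scheme applies with $\mbf{F}_{\mbf{z}}(L)=\mbf{G}_{\mbf{z}}-i\sqrt{\tau_{N}}\mbf{G}_{\mbf{z}}(L\otimes\ac{W}_{2})\mbf{F}_{\mbf{z}}(L)$. Since $\mbf{R}$ is not traceless, only the inserted $L\otimes\ac{W}_{2}$ factors provide trace savings, while the $\mbf{R}$'s contribute $\|\mbf{R}\|^{r}$ in operator norm. Using the block-diagonal structure of $\mbf{G}_{\mbf{z}}$ and cyclicity, the leading error $\langle P_{1}\rangle$ collapses to $\sqrt{\tau_{N}}\|L\|\|\mbf{R}\|^{r}$ times a trace of the form $\langle\ac{W}_{2}\prod_{\ell}G_{z_{i_{\ell}}}^{a_{\ell}}\rangle$ with $\sum_{\ell}a_{\ell}=p+q+1$. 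Such a trace is bounded by $O(N^{\epsilon/2}/(N\eta^{p+q+1/2}))$ by contour integration applied to \eqref{eq:C3.1}, giving the claimed $O(\|\mbf{R}\|^{r}/(N\eta^{p+q}))$ after using $\sqrt{\tau_{N}}\leq C/\sqrt{N}$ and $\eta\geq 2N^{-1+\epsilon}$. Higher order $P_{k}$ and the remainder are smaller by the same counting.

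The main obstacle is combinatorial bookkeeping, together with extending \eqref{eq:C3.1'} and \eqref{eq:C3.2'} to products of resolvents at different spectral parameters via contour integration (needed for the second estimate, where distinct $z_{j}$'s appear). One also has to track the powers of $G_{z}$ and $G^{*}_{z}$ produced at each order $k$ and verify that $r+k\leq 4n_{\epsilon}$ so that \eqref{eq:C3.2} applies; since $r\leq 8$ and $k\leq n_{\epsilon}$ this holds comfortably. Beyond these manipulations, the statement is essentially a direct consequence of the fact that each resolvent swap costs $\sqrt{\tau_{N}}\sim N^{-1/2}$ and introduces one extra $\ac{W}_{2}$ whose trace against products of $G$'s is suppressed by the local-law hypotheses.
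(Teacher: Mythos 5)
Your strategy is the same as the paper's: expand each $\mc{F}$, $\mc{F}^{*}$ (resp. $\mbf{F}_{\mbf{z}}(L)$) in a finite Neumann series in $D=-\sqrt{\tau_{N}}\mc{G}(\sigma_{Y}\otimes\ac{W}_{2})$ (resp. $D=-i\sqrt{\tau_{N}}\mbf{G}_{\mbf{z}}(L\otimes\ac{W}_{2})$), identify the zeroth-order term with $Q$, estimate each expanded term by \eqref{eq:C3.1'}--\eqref{eq:C3.2'} with a gain of $(N\eta)^{-1/2}$ per insertion, and kill the remainder with the norm bound on high powers of $D$ from \Cref{lem:normEstimates} together with the crude bound $\|P\|\leq C\eta^{-(p+q)}$. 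The reduction of block traces to scalar traces and the multi-parameter/mixed-power bookkeeping you flag are handled at the same level of informality in the paper, so those are not the issue.

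There is, however, one step that fails as written: the truncation bookkeeping. You iterate the identity "up to $n_{\epsilon}$ times at each occurrence of $\mc{F}$ or $\mc{F}^{*}$", so a fully expanded term can carry up to $(p+q)\,n_{\epsilon}\leq 8n_{\epsilon}$ inserted factors of $\ac{W}_{2}$; your assertion that $k\leq n_{\epsilon}$, and hence that $r+k\leq 4n_{\epsilon}$ "holds comfortably", is therefore false for the deep terms, and for those terms \eqref{eq:C3.2} (which is only assumed for at most $4n_{\epsilon}$ factors of $\ac{W}_{2}$) cannot be invoked, while single-factor norm bounds on $D$ are useless ($\|D\|$ can be as large as $N^{1/2-\epsilon}$). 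The paper sidesteps exactly this by choosing the per-occurrence cut-offs $\lceil n_{\epsilon}/p\rceil$ for $\mc{F}$ and $\lceil n_{\epsilon}/q\rceil$ for $\mc{F}^{*}$, so that every expanded term contains at most $2n_{\epsilon}+r\leq 4n_{\epsilon}$ factors of $\ac{W}_{2}$, and then checks that the correspondingly earlier remainder $D^{\lceil n_{\epsilon}/p\rceil}\mc{F}$ is still small enough, which is where the requirement $\epsilon n_{\epsilon}\geq 48$ (i.e. the definition of $n_{\epsilon}$) enters. Your argument can be repaired either by adopting these cut-offs or by supplying a separate norm-based estimate (e.g. via \eqref{eq:productNorm1} applied blockwise) for the terms with $r+k>4n_{\epsilon}$, but as stated the treatment of the high-order terms is a genuine gap.
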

\begin{proof}
For ease of notation we write $\mc{F}=\mc{F}_{u}(\eta)$ and $\mc{G}=\mc{G}_{u}(\eta)$. A general monomial $P$ has the form
\begin{align*}
    P&=\mc{F}^{p_{1}}(\mc{F}^{*})^{r_{1}}(1\otimes\ac{W}_{2})\cdots\mc{F}^{p_{r}}(\mc{F}^{*})^{q_{r}}(1\otimes\ac{W}_{2}),
\end{align*}
where $\sum_{j}p_{j}=p,\,\sum_{j}q_{j}=q$. Let $D=-\sqrt{\tau_{N}}\mc{G}(\sigma_{Y}\otimes\ac{W}_{2})$; we replace each $\mc{F}$ and $\mc{F}^{*}$ using the identities
\begin{align}
    \mc{F}&=\sum_{m=0}^{\lceil n_{\epsilon}/p\rceil-1}D^{m}\mc{G}+D^{\lceil n_{\epsilon}/p\rceil}\mc{F},\label{eq:series1}\\
    \mc{F}^{*}&=\sum_{m=0}^{\lceil n_{\epsilon}/q\rceil-1}\mc{G}^{*}(D^{*})^{m}+\mc{F}^{*}(D^{*})^{\lceil n_{\epsilon}/q\rceil}.\label{eq:series2}
\end{align}
By our choices of cut-off $\lceil n_{\epsilon}/p\rceil$ and $\lceil n_{\epsilon}/r\rceil$, each term in the resulting sum will be an alterating product containing no more that $2n_{\epsilon}+r$ factors of $\ac{W}_{2}$. Hence we can estimate each term by \eqref{eq:C3.2'}; additional factors of $D$ reduce the size of the trace by $1/\sqrt{N\eta}$. The condition $p+q\leq 8$ and the lower bound $n_{\epsilon}\geq48/\epsilon$ allow us to use the crude bound $|\langle X\rangle|\leq \|X\|$ to bound the contribution from the second terms in the right hand sides of \eqref{eq:series1} and \eqref{eq:series2}. To see this we note that in the proof of \Cref{lem:normEstimates} we established the bound
\begin{align*}
    \|D^{n}\|&\leq\frac{C}{N^{\frac{\epsilon n-1}{2}}}.
\end{align*}
The monomial $P$ has the bound $\|P\|\leq C\eta^{-(p+q)}$. The term containing $D^{\lceil n_{\epsilon}/p\rceil}$ can thus be bounded by
\begin{align*}
    \frac{C}{N^{\frac{\epsilon n_{\epsilon}}{2p}-\frac{1}{2}}\eta^{p+q}}.
\end{align*}
For this to be smaller than the error term in \eqref{eq:traceEstimate}, we require that
\begin{align*}
    \frac{\epsilon n_{\epsilon}}{2p}-\frac{3}{2}\geq\frac{r+1}{2}.
\end{align*}
Since $p\leq 8$ and $r\leq8$, this will certainly be true if
\begin{align*}
    \epsilon n_{\epsilon}\geq48,
\end{align*}
which is indeed the case by our choice of $n_{\epsilon}$. These are very crude bounds; one could reduce the size of $n_{\epsilon}$ by a more refined analysis. The proof of \eqref{eq:traceEstimate2} follows the same pattern; in this case we set $D=-i\sqrt{\tau_{N}}\mbf{G}_{\mbf{z}}(L\otimes\ac{W}_{2})$.
\end{proof}

We also need to estimate the two determinants contained in the following.
\begin{lemma}[Determinant estimates]\label{lem:detEstimates}
Let $\mbf{z}\in S^{2k}_{\epsilon}$ and $L\in\mbf{M}_{2k}$; then
\begin{align}
    \det\left(1+i\sqrt{\tau_{N}}\mbf{G}_{\mbf{z}}(L\otimes\ac{W}_{2})\right)&=\exp\left\{\frac{N\tau_{N}}{2}\sum_{i,j=1}^{2k}\langle G_{z_{i}}\ac{W}_{2}G_{z_{j}}\ac{W}_{2}\rangle\right\}+O\left(\frac{N^{\epsilon/2}}{\sqrt{N\eta}}\right),\label{eq:det1}\\
    \det\left(1+\sqrt{\tau_{N}}\mc{G}_{u}(\eta)(\sigma_{Y}\otimes\ac{W}_{2})\right)&=\exp\left\{N\tau_{N}\langle G_{z}\ac{W}_{2}G^{*}_{z}\ac{W}_{2}\rangle\right\}+O\left(\frac{1}{N\eta}\right).\label{eq:det2}
\end{align}
\end{lemma}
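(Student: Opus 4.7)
The plan is to use the identity $\det(1+D)=\exp(\tr\log(1+D))$ and to expand the logarithm as the power series $\tr\log(1+D)=\sum_{m\geq 1}(-1)^{m+1}\tr D^m/m$. The overall strategy has three steps: (i) truncate the series at an order $m_0$ of size $n_\epsilon$, using the norm bound $\|D^n\|\leq CN^{-(\epsilon n-1)/2}$ derived in the proof of \Cref{lem:normEstimates} together with the crude estimate $|\tr X|\leq N\|X\|$ to make the discarded tail negligible; (ii) estimate each remaining trace $\tr D^m$ for $m\leq m_0$ via the multi-resolvent local laws \eqref{eq:C3.1}-\eqref{eq:C3.2}; and (iii) exponentiate using $e^{a+b}=e^a+O(|b|)$ for bounded $a$ and small $b$ to convert the additive error in $\tr\log$ into the stated additive error in the determinant. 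The required boundedness of the leading exponent follows by combining \eqref{eq:C2} with the $m=2$ instance of \eqref{eq:C3.2}.

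For \eqref{eq:det2}, set $D=\sqrt{\tau_N}\mc{G}_u(\eta)(\sigma_Y\otimes\ac{W}_2)$. The Pauli algebra underlying \eqref{eq:D2n}-\eqref{eq:D2n+1} shows that odd powers of $D$ are off-diagonal in the Pauli block structure (containing only $\rho,\rho^*$), so $\tr D^{2n+1}=0$, while by cyclicity $\tr D^{2n}=-2\tau_N^n\tr(G_z\ac{W}_2 G_z^*\ac{W}_2)^n$. Hence $\tr\log(1+D)=\sum_{n\geq 1}(\tau_N^n/n)\tr(G_z\ac{W}_2 G_z^*\ac{W}_2)^n$, whose $n=1$ term is exactly the claimed exponent $N\tau_N\langle G_z\ac{W}_2 G_z^*\ac{W}_2\rangle$. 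For $2\leq n\leq 2n_\epsilon$, applying \eqref{eq:C3.2} to the product of $2n$ resolvents gives $|\tau_N^n\tr(G_z\ac{W}_2 G_z^*\ac{W}_2)^n|\leq C(N\tau_N)^n/(N\eta)^{n-1}$; since $N\tau_N=O(1)$ and $(N\eta)^{-1}\leq N^{-\epsilon}$, these terms sum geometrically to $O(1/(N\eta))$.

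For \eqref{eq:det1}, set $D=i\sqrt{\tau_N}\mbf{G}_{\mbf{z}}(L\otimes\ac{W}_2)$ and expand $\tr D^m=i^m\tau_N^{m/2}\sum_{i_1,\ldots,i_m}L_{i_1i_2}\cdots L_{i_mi_1}\tr G_{z_{i_1}}\ac{W}_2\cdots G_{z_{i_m}}\ac{W}_2$. The $m=1$ term, $i\sqrt{\tau_N}\sum_i L_{ii}\tr G_{z_i}\ac{W}_2$, is of size $O(N^{\epsilon/2}/\sqrt{N\eta})$ by \eqref{eq:C3.1} together with $\sqrt{\tau_N}\leq C/\sqrt{N}$, matching the stated error and absorbed into it. The $m=2$ term produces the main exponent $(N\tau_N/2)\sum_{i,k}L_{ik}L_{ki}\langle G_{z_i}\ac{W}_2 G_{z_k}\ac{W}_2\rangle$, which gives the claimed form under the intended normalisation of $L$. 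For $m\geq 3$, \eqref{eq:C3.2} yields $|\tr D^m|\leq C\tau_N^{m/2}N/\eta^{m/2-1}\leq C(N\eta)^{-(m/2-1)}$; the dominant contribution at $m=3$ is $O(1/\sqrt{N\eta})$, and the geometric sum over higher $m$ is of the same order. Exponentiating as above then yields the stated identity.

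The main obstacle is bookkeeping across the two regimes: for $m\leq m_0$ one relies on \eqref{eq:C3.2} to extract the sharp polynomial decay in $N\eta$, while for $m>m_0$ one falls back on the operator-norm estimates, and the cutoff $m_0\sim n_\epsilon$ must be chosen so that both sources of error are admissible simultaneously. A secondary delicate point is verifying that the leading exponents are uniformly bounded, so that the step $e^{a+b}=e^a+O(|b|)$ is legitimate; this in both cases reduces to \eqref{eq:C3.2} with $m=2$ combined with $N\tau_N=O(1)$ and, for \eqref{eq:det2}, the hypothesis \eqref{eq:C2}.
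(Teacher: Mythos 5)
Your proposal follows essentially the same route as the paper: expand $\log\det(1+D)$, cut the series off at order $\sim n_{\epsilon}$ using the norm bound $\|D^{n}\|\leq CN^{-(\epsilon n-1)/2}$, estimate the surviving traces with \eqref{eq:C3.1}--\eqref{eq:C3.2} so that only the $m=2$ term contributes to the exponent, and use $\tr\rho=0$ to eliminate odd powers in \eqref{eq:det2}, which yields the smaller error there. Apart from a harmless sign slip in your formula for $\tr D^{2n}$ when $n\geq2$ is even (these terms only enter error bounds taken in absolute value), the argument is correct, and your remark that the $m=2$ term carries the factors $L_{ik}L_{ki}$ is consistent with how the lemma is actually applied in the paper.
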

\begin{proof}
Let $D=-i\sqrt{\tau_{N}}\mbf{G}_{\mbf{z}}(L\otimes\ac{W}_{2})$. From the proof of \Cref{lem:normEstimates} we know that $(1-xD)^{-1}$ exists for all $0<x<1$ and satisfies
\begin{align*}
    \|(1-xD)^{-1}\|&\leq\frac{1}{1-x\|D^{n_{\epsilon}}\|}\sum_{m=0}^{n_{\epsilon}-1}x^{m}\|D^{m}\|\\
    &\leq\frac{C}{\sqrt{N}\eta},
\end{align*}
for all $0\leq x\leq 1$. Using the integral representation of the logarithm we find
\begin{align*}
    \det(1-D)&=\exp\left\{-\sum_{m=1}^{n_{\epsilon}-1}\frac{1}{m}\tr D^{m}-\int_{0}^{1}x^{n_{\epsilon}}\tr D^{n_{\epsilon}}(1-xD)^{-1}\right\}.
\end{align*}
We can bound the remainder by taking norms:
\begin{align*}
    |\tr D^{n_{\epsilon}}(1-xD)^{-1}|&\leq N\|D^{n_{\epsilon}}\|\cdot\|(1-xD)^{-1}\|\\
    &\leq\frac{C}{N^{23}\eta}.
\end{align*}
Using \eqref{eq:C3.1} and \eqref{eq:C3.2} we can estimate each term in the sum. For $m=1$ we have
\begin{align*}
    \tr D&=-i\sqrt{\tau_{N}}\sum_{i=1}^{2k}l_{ii}\tr G_{z_{i}}\ac{W}_{2}=O\left(\frac{N^{\epsilon/2}}{\sqrt{N\eta}}\right),
\end{align*}
and for $m>2$ we have
\begin{align*}
    \tr D^{m}&=(-i)^{m}\tau^{m/2}_{E,t}\sum l_{i_{1}i_{2}}\cdots l_{i_{m}i_{1}}\tr G_{z_{i_{1}}}\ac{W}_{2}\cdots G_{z_{i_{m}}}\ac{W}_{2}=O\left(\frac{1}{(N\eta)^{m/2-1}}\right).
\end{align*}
Therefore only the $m=2$ term remains and \eqref{eq:det1} follows.

The estimate \eqref{eq:det2} is proven in the same way, with $D=-\sqrt{\tau_{N}}\mc{G}_{u}(\eta)(\sigma_{Y}\otimes W_{2})$. The difference in this case is that only even powers of $D$ contribute due to the equality \eqref{eq:D2n+1} and the fact that $\tr\rho=0$. Hence the smaller error term in \eqref{eq:det2}.
\end{proof}

\section{Proof of \Cref{thm1}}
Let $0<\epsilon<1/2$, $t\geq N^{-1+2\epsilon}$, $E\in(-2,2)$ and $W_{1},W_{2}\in\mc{W}_{N,\epsilon}$. Let $A=W_{1}+i\sqrt{\tau_{N}}W_{2}$ and $B=V_{1}+i\sqrt{\tau_{N}}V_{2}$, where $V_{1}$ and $V_{2}$ are independent GUE matrices. We will apply the partial Schur decomposition formula in \eqref{eq:kpointintegral} to the Gauss-divisible matrix
\begin{align*}
    M_{t}&=A+\sqrt{t}B,
\end{align*}
where
\begin{align*}
    B&=V_{1}+i\sqrt{\tau_{N}}V_{2},
\end{align*}
and $V_{1},V_{2}$ are independent GUE matrices. This ensemble has density
\begin{align*}
    \rho(M)&=\frac{1}{Z_{N,t}}e^{-\frac{N}{2t}\tr\left[(\Re M-W_{1})^{2}+\frac{1}{\tau_{N}}(\Im M-W_{2})^{2}\right]},
\end{align*}
with respect to the Lebesgue measure $dM=2^{N(N-1)}d(\Re M)d(\Im M)$, where
\begin{align*}
    Z_{N,t}&=\left(\frac{2\pi t\sqrt{\tau_{N}}}{N}\right)^{N^{2}}.
\end{align*}
Let
\begin{align}
    U^{*}AU&=\begin{pmatrix}a_{1}&\mbf{b}_{1}^{*}V_{1}^{*}&&&\\
    V_{1}\mbf{c}_{1}&a_{2}&\mbf{b}_{2}^{*}V_{2}^{*}&&\\
    &V_{2}\mbf{c}_{2}&\ddots&\ddots&\\
    &&\ddots&a_{k}&\mbf{b}_{k}^{*}\\
    &&&\mbf{c}_{k}&A^{(k)}\end{pmatrix},\label{eq:UAU}
\end{align}
where $U$ is the unitary in the partial Schur decomposition \eqref{eq:partialSchur} of $M_{t}$. We have, with $A^{(0)}=A$,
\begin{align}
    a_{j}&=\mbf{v}_{j}^{*}A^{(j-1)}\mbf{v}_{j},\nonumber\\
    \begin{pmatrix}0\\\mbf{b}_{j}\end{pmatrix}&=R_{j}(1-\mbf{v}_{j}\mbf{v}_{j}^{*})A^{(j-1)*}\mbf{v}_{j},\nonumber\\
    \begin{pmatrix}0\\\mbf{c}_{j}\end{pmatrix}&=R_{j}(1-\mbf{v}_{j}\mbf{v}_{j}^{*})A^{(j-1)}\mbf{v}_{j},\nonumber\\
    \begin{pmatrix}0&0\\0&A^{(j)}\end{pmatrix}&=R_{j}(1-\mbf{v}_{j}\mbf{v}_{j}^{*})A^{(j-1)}(1-\mbf{v}_{j}\mbf{v}_{j}^{*})R_{j}.\label{eq:Arecurrence}
\end{align} 
The matrices $A^{(j)}$, $W^{(j)}:=\Re A^{(j)}$ and $W_{2}^{(j)}:=\Im A^{(j)}$ are projections of $A$, $W$ and $W_{2}$ respectively onto a space of codimension $j$ (the orthogonal complement of the span of the first $j$ columns of $U$). We define
\begin{align}
    \ac{W}^{(j)}_{2}&:=W^{(j)}_{2}-\langle W_{2}\rangle,\label{eq:acWj}\\
    \ac{A}^{(j)}&:=W^{(j)}_{1}+i\sqrt{\tau_{N}}\ac{W}^{(j)}_{2};\label{eq:acAj}
\end{align}
note that we have subtracted the trace of the original matrix $W_{2}$ rather than the trace of $W^{(j)}_{2}$, so $\ac{W}^{(j)}_{2}$ is no longer traceless (it is approximately traceless by interlacing). We denote by superscripts $(j)$ any quantity obtained from that without a superscript by replacing $A$ with $A^{(j)}$, e.g. $G^{(j)}(z)=(W^{(j)}_{1}-z)^{-1}$, $\mc{G}^{(j)}_{u}(\eta)=\sigma_{X}\otimes(W^{(j)}_{1}-u)-i\eta$, etc. Recall that the resolvents $F,\mc{F},$ and $\mbf{F}_{\mbf{z}}(L)$ are defined with respect to $\ac{A}$ and not $A$.

In these coordinates the density has the expression
\begin{align*}
    \rho\big(&M(\mbf{v}_{1},\mbf{w}_{1},...,\mbf{v}_{k},\mbf{w}_{k},M^{(k)})\big)=\\&\frac{1}{Z_{N,t}}\exp\left\{-\frac{N}{2t}\sum_{j=1}^{k}\left[(x_{j}-\Re a_{j})^{2}+\frac{1}{\tau_{N}}(y_{j}-\Im a_{j})^{2}\right]\right.\\&\left.-\frac{N}{2t}\sum_{j=1}^{k}\left[\frac{1}{2}\left\|\mbf{w}_{j}-\mbf{b}_{j}-\mbf{c}_{j}\right\|^{2}+\frac{1}{2\tau_{N}}\left\|\mbf{w}_{j}-\mbf{b}_{j}+\mbf{c}_{j}\right\|^{2}\right]\right.\\
    &\left.-\frac{N}{2t}\tr\left[(\Re M^{(k)}-\Re A^{(k)})^{2}+\frac{1}{\tau_{N}}(\Im M^{(k)}-\Im A^{(k)})^{2}\right]\right\}.
\end{align*}
We need to insert this expression into \eqref{eq:kpointintegral} and then integrate with respect to $\mbf{v}_{j},\mbf{w}_{j},\,j=1,...,k$ and $M^{(k)}$. The integral with respect to $\mbf{w}_{j}$ is Gaussian:
\begin{align*}
    \int_{\mbb{C}^{N-j}}e^{-\frac{N}{4t}\left(\left\|\mbf{w}_{j}-\mbf{b}_{j}-\mbf{c}_{j}\right\|^{2}+\frac{1}{\tau_{N}}\left\|\mbf{w}_{j}-\mbf{b}_{j}+\mbf{c}_{j}\right\|^{2}\right)}d\mbf{w}_{j}&=\left(\frac{4\pi t\tau_{N}}{N(1+\tau_{N})}\right)^{N-j}e^{-\frac{N}{t(1+\tau_{N})}\left\|\mbf{c}_{j}\right\|^{2}}.
\end{align*}

We rewrite the $\mbf{v}_{j}$ integrals by defining the following probability measures $\nu_{j}$ on $S^{N-j}$:
\begin{align}
    d\nu_{j}(\mbf{v}_{j})&=\left(\frac{N}{\pi t(1+\tau_{N})}\right)^{N-j}\frac{1}{K_{j}(z_{j};A^{(j-1)})}e^{-\frac{N}{t(1+\tau_{N})}\left\|\mbf{c}_{j}\right\|^{2}-\frac{N}{2t}\left[(x_{j}-\Re a_{j})^{2}+\frac{1}{\tau_{N}}(y_{j}-\Im a_{j})^{2}\right]}dS_{N-j}(\mbf{v}_{j}),\label{eq:nuj}
\end{align}
where
\begin{align}
    K_{j}(z;A^{(j-1)})&=\left(\frac{N}{\pi t(1+\tau_{N})}\right)^{N-j}\int_{S^{N-j}}e^{-\frac{N}{t(1+\tau_{N})}\left\|\mbf{c}_{j}\right\|^{2}-\frac{N}{2t}\left[(x_{j}-\Re a_{j})^{2}+\frac{1}{\tau_{N}}(y_{j}-\Im a_{j})^{2}\right]}dS_{N-j}(\mbf{v}_{j}).\label{eq:Kj}
\end{align}
Since $A^{(j-1)}$ depends on $\mbf{v}_{l},\,l=1,...,j-1$, so do $K_{j}$ and $\nu_{j}$, and we must integrate over $\nu_{j}$ in descending order of $j$.

We also define the function $\Psi$ by
\begin{align}
    \Psi(\mbf{z};A^{(k)})&=\left(\frac{N}{2\pi t\sqrt{\tau_{N}}}\right)^{(N-k)^{2}}|\Delta(\mbf{z})|^{2}\int_{\mbf{M}_{N-k}}e^{-\frac{N}{2t}\tr\left[(\Re M^{(k)}-\Re A^{(k)})^{2}+\frac{1}{\tau_{N}}(\Im M^{(k)}-\Im A^{(k)})^{2}\right]}\nonumber\\&\prod_{j=1}^{k}\left|\det(M^{(k)}-z_{j})\right|^{2}\,dM^{(k)},\label{eq:F}
\end{align}
and note that $\Psi$ has the form of an expectation value of a product of characteristic polynomials of a Gauss-divisible matrix:
\begin{align*}
    \Psi(\mbf{z};A^{(k)})&=|\Delta(\mbf{z})|^{2}\mbb{E}_{M^{(k)}_{t}}\left[\prod_{j=1}^{k}\left|\det(M^{(k)}_{t}-z_{j})\right|^{2}\right],
\end{align*}
where
\begin{align*}
    M^{(k)}_{t}&=A^{(k)}+\sqrt{t}B^{(k)}
\end{align*}
and $B^{(k)}$ is a complex Gaussian elliptic matrix of dimension $N-k$ and variance $1/N$.

After these definitions we can write down a more compact expression for $\rho^{(k)}_{N}$:
\begin{align}
    \rho^{(k)}_{N}(\mbf{z})&=c_{N,k}\int_{S^{N-1}}\cdots\int_{S^{N-k}}\left(\prod_{j=1}^{k}K_{j}(z_{j};A^{(j-1)})\right)\Psi(\mbf{z};A^{(k)})d\nu_{k}(\mbf{v}_{k})\cdots d\nu_{1}(\mbf{v}_{1}),\label{eq:rhocompact}
\end{align}
where
\begin{align*}
    c_{N,k}&=\frac{1}{(2\pi)^{k}}\left(\frac{N}{2\pi t\sqrt{\tau_{N}}}\right)^{k}.
\end{align*}
This is the desired integral formula for the $k$-point function. 

The first part of \Cref{thm1} states that there exists a unique $\lambda_{E,t}=u_{E,t}+i\eta_{E,t}$ such that $\eta_{E,t}>0$ and
\begin{align*}
    \lambda_{E,t}&=E+t\langle G(\lambda_{E,t})\rangle.
\end{align*}
This follows by the fixed point argument in Section 3.2 of \cite{erdos_bulk_2010}, where the function
\begin{align*}
    \phi(\lambda)&=\frac{(\lambda-E)^{2}}{2t}+\frac{1}{N}\log\det(W_{1}-\lambda)
\end{align*}
is studied (see also Section 3 of \cite{johansson_universality_2001}). In brief, the conditions \eqref{eq:C1.1}, \eqref{eq:C1.2} and \eqref{eq:C1.3} ensure that the function
\begin{align*}
    f(\lambda)&=E+t\langle G_{\lambda}\rangle
\end{align*}
maps the region $\{\lambda\in\mbb{C}:|\Re\lambda-E|\leq Ct,\,t/C\leq \Im\lambda\leq Ct\}$ to itself and is a contraction. Therefore it has a fixed point $\lambda_{E,t}$ which satisfies
\begin{align}
    t/C\leq\eta_{E,t}&\leq Ct,\label{eq:etaBounds}\\
    |u_{E,t}-E|&\leq Ct.\label{eq:uBounds}
\end{align}
We will repeatedly use the fact that $t=N^{-1+2\epsilon}\geq2N^{-1+\epsilon}$ so $\lambda_{E,t}\in S_{\epsilon}$ and in particular the bounds in \eqref{eq:C3.1'} and \eqref{eq:C3.2'} are valid.

Let $\alpha_{E,t}=\alpha_{N}(\lambda_{E,t})$, $\beta_{E,t}=\beta_{N}(\lambda_{E,t})$ and
\begin{align}
    \tau_{E,t}&=\beta_{E,t}+\frac{N\tau_{N}\eta_{E,t}^{2}}{t}.\label{eq:tauEt}
\end{align}
To simplify notation we suppress the argument of resolvents evaluated at $\lambda_{E,t}$, i.e. $G:=G(\lambda_{E,t})$ and $F=F(\lambda_{E,t})$. Likewise we write $\mc{G}:=\mc{G}_{u_{E,t}}(\eta_{E,t})$ and $\mc{F}:=\mc{F}_{u_{E,t}}(\eta_{E,t})$.

We are interested in the bulk limit and so we fix an $E\in(-2,2)$ and define
\begin{align}
    \wt{\rho}^{(k)}_{N}(\mbf{z})&=\left(\frac{t}{N\eta_{E,t}}\right)^{2k}\rho^{(k)}_{E,t}\left(E+i\sqrt{\tau_{N}}\langle W_{2}\rangle+\frac{t\mbf{z}}{N\eta_{E,t}}\right)\nonumber\\
    &=\int_{S^{N-1}}\cdots\int_{S^{N-k}}\left(\prod_{j=1}^{k}\wt{K}_{j}(z_{j};A^{(j-1)})\right)\wt{\Psi}(\mbf{z};A^{(k)})d\nu_{k}(\mbf{v}_{k})\cdots d\nu_{1}(\mbf{v}_{1}),\label{eq:rhoTilde}
\end{align}
where
\begin{align}
    \wt{K}_{j}(z_{j};A^{(j-1)})&=\frac{1}{2\eta^{2}_{E,t}\sqrt{2\pi\tau_{N}}}K_{j}\left(E+i\sqrt{\tau_{N}}\langle W_{2}\rangle+\frac{tz_{j}}{N\eta_{E,t}}\right),\label{eq:KTilde}\\
    \wt{\Psi}(\mbf{z};A^{(k)})&=\frac{t^{k}}{2^{k/2}\pi^{3k/2}N^{k}}\Psi\left(E+i\sqrt{\tau_{N}}\langle W_{2}\rangle+\frac{t\mbf{z}}{N\eta_{E,t}};A^{(k)}\right).\label{eq:PsiTilde}
\end{align}
For ease of notation we introduce 
\begin{align}
    w_{j}&=E+i\sqrt{\tau_{N}}\langle W_{2}\rangle+\frac{tz_{j}}{N\eta_{E,t}},\quad j=1,...,k.
\end{align}

As it stands the integrand in \eqref{eq:rhoTilde} depends on the integration variables $\mbf{v}_{j}$ in a very complicated way through the matrices $A^{(j)}$. However, these matrices are projections of $A$ onto random subspaces of codimension $j$ and we expect that for most subspaces we can approximate $A^{(j)}$ by $A$. This will greatly simplify the behaviour of the integrand with respect to $\mbf{v}_{j}$, and in particular allow us to compute the asymptotics of $\wt{K}_{j}$ and $\wt{\Psi}$. Therefore our second step is to show that a certain quadratic form in $\mbf{v}_{j}$ concentrates around an explicit fixed quantity with respect to the measure $\nu_{j}$.

To state the relevant concentration property, we define
\begin{align}
    \mc{E}_{j}(C)&=\left\{\left|\left|\frac{2\eta_{E,t}\mbf{v}_{j}^{*}G^{(j-1)}\mbf{v}_{j}}{1-t\langle G^{2}\rangle}\right|^{2}-1\right|<\frac{C\log N}{\sqrt{Nt}}\right\}.\label{eq:Ej}
\end{align}
\begin{lemma}\label{lem:nuConc}
There are constants $C,C'$ such that
\begin{align}
    \nu_{j}\left(S^{N-j}\setminus\mc{E}_{j}(C)\right)&\leq e^{-C'\log^{2}N},\label{eq:conc}
\end{align}
uniformly for $\mbf{v}_{l}\in\mc{E}_{l}(C),\,l=1,...,j-1$.
\end{lemma}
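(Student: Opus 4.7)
The plan is to extract the Gaussian structure of $\nu_{j}$ by a Hubbard--Stratonovich linearisation of the quadratic terms $(x_{j}-\Re a_{j})^{2}$ and $(y_{j}-\Im a_{j})^{2}$ in the exponent of \eqref{eq:nuj}. After introducing two real auxiliary variables $(r,s)$, the $\mbf{v}_{j}$-dependence of the integrand reduces to a single Gaussian weight of the form $\exp(-(N/t)\mbf{v}_{j}^{*}M(r,s)\mbf{v}_{j})$ with $M(r,s)$ Hermitian and built from $W_{1}^{(j-1)}$ and $\ac{W}_{2}^{(j-1)}$. The $\mbf{v}_{j}$-integral is then evaluated by \Cref{lem:sphericalint}, leaving an integral in $(r,s)$ which we treat by the Laplace method.

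First I would locate the saddle of the $(r,s)$-integral. Using \Cref{lem:minorresolvent} with \eqref{eq:minornorm} to replace resolvents of $W_{1}^{(j-1)}$ by resolvents of $W_{1}$, up to a rank-$(j-1)$, operator-norm-$O(1/\eta_{E,t})$ error that is subleading once we condition on the previous events $\mc{E}_{l}(C)$ for $l<j$, the saddle equation collapses to $\lambda=E+tm_{N}(\lambda)$, whose unique solution in the upper half-plane is $\lambda_{E,t}$ by the first assertion of \Cref{thm1}. Examining the quadratic form $\mbf{v}_{j}^{*}G^{(j-1)}\mbf{v}_{j}$ at this saddle and using the spherical integration identity shows that its modulus is fixed at $|1-t\langle G^{2}\rangle|/(2\eta_{E,t})$, which is precisely the normalisation appearing inside $\mc{E}_{j}(C)$.

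The Gaussian expansion transverse to the saddle has effective Hessian $(1-t\langle G^{2}\rangle)/t$, which is bounded away from zero by \eqref{eq:C1.2}: indeed $|tm_{N}'(\lambda_{E,t})|\leq 1-c_{m'}$ forces $|1-t\langle G^{2}\rangle|\geq c_{m'}$. Fluctuations of $|X|^{2}:=|2\eta_{E,t}\mbf{v}_{j}^{*}G^{(j-1)}\mbf{v}_{j}/(1-t\langle G^{2}\rangle)|^{2}$ about $1$ are therefore Gaussian with standard deviation of order $1/\sqrt{Nt}$, and a sub-Gaussian tail estimate bounds $\nu_{j}(\{||X|^{2}-1|\geq C\log N/\sqrt{Nt}\})$ by $\exp(-cC^{2}\log^{2}N)$, giving \eqref{eq:conc} once $C$ is chosen large enough.

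The main technical obstacle is propagating the concentration through the $j$-recursion. Because $A^{(j-1)}$ depends on $\mbf{v}_{1},\dots,\mbf{v}_{j-1}$, the saddle-point analysis must be uniform in the previous variables, which we enforce by conditioning on $\mbf{v}_{l}\in\mc{E}_{l}(C)$ for $l<j$. Under this conditioning, iterated use of \Cref{lem:minorresolvent} together with \Cref{lem:normEstimates} and \Cref{lem:traceEstimates} allows us to replace all resolvents and traces built from $A^{(j-1)}$ by those of $\ac{A}$ with cumulative errors that are negligible on the scale $1/\sqrt{Nt}$. The anti-Hermitian perturbation $i\sqrt{\tau_{N}}\ac{W}_{2}^{(j-1)}$ is absorbed using $\tau_{N}=O(N^{-1})$ and modifies the saddle only at subleading order through the combination $\beta_{N}(\lambda_{E,t})+N\tau_{N}\eta_{E,t}\Im m_{N}(\lambda_{E,t})$ from \eqref{eq:tauE}, which does not affect the concentration rate.
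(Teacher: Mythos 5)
Your outline stays inside the right family of techniques (Hubbard--Stratonovich linearisation of the quadratic terms in \eqref{eq:nuj}, evaluation of the $\mbf{v}_{j}$-integral by \Cref{lem:sphericalint}, Laplace analysis around $\lambda_{E,t}$, replacement of $A^{(j-1)}$ by $A$ under the conditioning on $\mc{E}_{l}$, $l<j$), and your identification of the limiting value $|1-t\langle G^{2}\rangle|/(2\eta_{E,t})$ and of the lower bound $|1-t\langle G^{2}\rangle|\geq c_{m'}$ from \eqref{eq:C1.2} is correct. But there is a genuine gap at the decisive step: the sentence ``fluctuations are therefore Gaussian with standard deviation of order $1/\sqrt{Nt}$, and a sub-Gaussian tail estimate bounds $\nu_{j}(\cdot)$ by $\exp(-cC^{2}\log^{2}N)$'' asserts exactly what the lemma requires you to prove, and nothing in the preceding construction delivers it. The Laplace/saddle analysis of the $(r,s)$-integral only gives the asymptotics of the normalising constant $K_{j}$ (this is essentially \Cref{lem:KAsymp}); once $\mbf{v}_{j}$ has been integrated out via \Cref{lem:sphericalint}, all information about the $\nu_{j}$-distribution of the random variable $\mbf{v}_{j}^{*}G^{(j-1)}\mbf{v}_{j}$ is gone, so no tail bound for it can be read off from the location or nondegeneracy of the saddle. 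A concentration statement at rate $e^{-C'\log^{2}N}$ needs a quantitative mechanism acting on the quadratic form itself.

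The paper supplies this mechanism by a Chernoff argument: it bounds the tilted moment generating function $m_{I}(r)=e^{-r\tr M^{(j-1)}}\mbb{E}_{j}\big[e^{r\eta\,\mbf{v}_{j}^{*}\Im G^{(j-1)}\mbf{v}_{j}}\big]$ (and its analogue for the real part --- the modulus in $\mc{E}_{j}$ is controlled by concentrating $\Re$ and $\Im$ of $\mbf{v}_{j}^{*}G^{(j-1)}\mbf{v}_{j}$ separately, around $\tfrac{t}{2\eta_{E,t}}\Im\langle G^{2}\rangle$ and $\tfrac{1-t\Re\langle G^{2}\rangle}{2\eta_{E,t}}$). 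The tilt is absorbed into the matrix in the quadratic form of \eqref{eq:Khat}, the same HS plus spherical-integral representation is run on the tilted integrand, the resulting $\det^{-1}(1-rM^{(j-1)}_{u})$ is controlled by trace bounds and the restriction $|u-u_{E,t}|<\sqrt{t/N}\log N$, and --- crucially --- the normaliser $K_{j}$ in \eqref{eq:nuj} must be bounded \emph{below}, which the paper gets from the asymptotics of \Cref{lem:KAsymp}; this is precisely where the conditioning on $\mbf{v}_{l}\in\mc{E}_{l}$, $l<j$, is used, not merely for resolvent replacements. Markov's inequality with $r\sim\sqrt{t/(N\eta_{E,t}^{2})}\log N$ then yields the $e^{-C'\log^{2}N}$ bound. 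Your proposal omits both the exponential tilting and the lower bound on $K_{j}$, so as written it cannot produce \eqref{eq:conc}; to repair it you would need to add this (or an equivalent) exponential-moment step rather than invoking an unproven ``sub-Gaussian tail estimate.''
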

The proof of this lemma is given in \Cref{sec:K}. Henceforth we denote $\mc{E}_{j}:=\mc{E}_{j}(C)$. We note that by \eqref{eq:C1.3} we have $|1-t\langle G^{2}\rangle|>C$ and so in $\mc{E}_{j}$ we have
\begin{align}
    |\mbf{v}_{j}^{*}G^{(j-1)}\mbf{v}_{j}|&\geq\frac{C}{\eta_{E,t}}.\label{eq:minorNorm}
\end{align}
From this we derive the following bounds, which are essentially extensions of the norm bounds in \Cref{lem:normEstimates} to resolvents of $A^{(j)}$.
\begin{lemma}\label{lem:minornorms}
Let $1\leq j\leq k$ and $\mbf{v}_{l}\in\mc{E}_{l},\,l=1,...,j$. We have
\begin{align}
    \left\|(V_{j}^{*}\mc{F}^{(j-1)}V_{j})^{-1}\right\|&\leq C\eta_{E,t},\label{eq:minorNorm1}\\
    \left\|\mc{G}^{(j)}(\sigma_{Y}\otimes\ac{W}^{(j)}_{2})\mc{F}^{(j)}\right\|&\leq\frac{C(N\eta_{E,t})^{1/2n_{\epsilon}}}{\eta_{E,t}^{3/2}},\label{eq:mcGWmcFminorNorm}\\
    \left\|(\mc{G}^{(j)}(\sigma_{Y}\otimes\ac{W}^{(j)}_{2})^{2}\mc{F}^{(j)}\right\|&\leq\frac{C(N\eta_{E,t})^{1/n_{\epsilon}}}{\eta_{E,t}^{3/2}},\label{eq:mcGW2mcFminorNorm}\\
    \left\|\mc{F}^{(j)}(1\otimes\ac{W}^{(j)}_{2})\mc{F}^{(j)}\right\|&\leq\frac{C(N\eta_{E,t})^{1/2n_{\epsilon}}}{\eta_{E,t}^{3/2}},\label{eq:mcFWmcFminorNorm}\\
    \left\|(\mc{F}^{(j)}(1\otimes\ac{W}^{(j)}_{2}))^{2}\mc{F}^{(j)}\right\|&\leq\frac{C(N\eta_{E,t})^{1/n_{\epsilon}}}{\eta_{E,t}^{2}}.\label{eq:mcFW2mcFminorNorm}
\end{align}
For any $L\in\mbf{M}^{sa}_{2k}$ and $\bs\lambda\in\{\lambda_{E,t},\bar{\lambda}_{E,t}\}^{2k}$ we have
\begin{align}
    \left\|(\mbf{V}_{j}^{*}\mbf{F}^{(j-1)}_{\bs\lambda}(L)\mbf{V}_{j})^{-1}\right\|&\leq C\eta_{E,t},\label{eq:minorNorm2}\\
    \left\|\mbf{F}^{(j)}_{\bs\lambda}(L)\right\|&\leq\frac{C}{\eta_{E,t}},\label{eq:mbfFminorNorm}\\
    \left\|\mbf{G}^{(j)}_{\bs\lambda}(L\otimes\ac{W}^{(j)}_{2})\mbf{F}^{(j)}_{\bs\lambda}(L)\right\|&\leq\frac{C(N\eta_{E,t})^{1/2n_{\epsilon}}}{\eta_{E,t}^{3/2}}.\label{eq:mbfGWmbfFminorNorm}
\end{align}
For $|u-u_{E,t}|<\sqrt{\frac{t}{N}}\log N$ we have
\begin{align}
    \left\|\sqrt{H^{(j)}_{u}}\ac{W}^{(j)}_{2}\sqrt{H^{(j)}_{u}}\right\|&\leq\frac{C(Nt)^{1/4}}{t}.\label{eq:HWnorm}
\end{align}
\end{lemma}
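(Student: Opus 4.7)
The plan is to induct on $j$, with the base case $j = 0$ covered by \Cref{lem:normEstimates}. The bridge between levels $j-1$ and $j$ is \Cref{lem:minorresolvent}: on the event $\mc{E}_j$, the lower bound \eqref{eq:minorNorm} ensures that the rank-one correction $G^{(j-1)}_z \mbf{v}_j (\mbf{v}_j^* G^{(j-1)}_z \mbf{v}_j)^{-1} \mbf{v}_j^* G^{(j-1)}_z$ that arises from \eqref{eq:minorresolvent} has operator norm bounded by $C/\eta$, so passing from $G^{(j-1)}$ to $G^{(j)}$ costs only a constant factor; since $k$ is fixed, iterating through $j = 1, \ldots, k$ does not spoil any of the target bounds.

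The main preparatory step is to upgrade \Cref{lem:normBound} to the projected setting by showing $\|(G^{(j)}_z \ac{W}_2^{(j)})^m G^{(j)}_z\| \leq C(N\eta)^{m/2n_\epsilon}/\eta^{m/2+1}$ for $1 \leq j \leq k$. I would expand each $G^{(j)}$ as $G^{(j-1)}$ plus the rank-one correction from \Cref{lem:minorresolvent}, distribute across the product, and bound each resulting term using the inductive hypothesis together with $\|\ac{W}_2^{(j)}\| \leq 2c_0$. Once this minor version of \Cref{lem:normBound} is in hand, the bounds \eqref{eq:mcGWmcFminorNorm}--\eqref{eq:mcFW2mcFminorNorm} and \eqref{eq:mbfFminorNorm}--\eqref{eq:mbfGWmbfFminorNorm} follow by repeating the arguments for \eqref{eq:mcGWmcFnorm}--\eqref{eq:mcFWmcFnorm} and \eqref{eq:mbfNorm}--\eqref{eq:mbfGWmbfFnorm} verbatim with $W_1^{(j)}$ and $\ac{W}_2^{(j)}$ in place of $W_1$ and $\ac{W}_2$.

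For the inverse bounds \eqref{eq:minorNorm1} and \eqref{eq:minorNorm2}, the Pauli decomposition $\mc{G} = \sigma_+ \otimes G - \sigma_- \otimes G^*$ makes $V_j^* \mc{G}^{(j-1)} V_j$ block-diagonal with diagonal entries involving $\mbf{v}_j^* G^{(j-1)} \mbf{v}_j$ and its conjugate, so \eqref{eq:minorNorm} directly yields $\|(V_j^* \mc{G}^{(j-1)} V_j)^{-1}\| \leq C\eta_{E,t}$. The passage from $\mc{G}^{(j-1)}$ to $\mc{F}^{(j-1)}$ is then handled by a Neumann series in $D^{(j-1)} = -\sqrt{\tau_N}\,\mc{G}^{(j-1)}(\sigma_Y \otimes \ac{W}_2^{(j-1)})$, whose powers are controlled by the minor version of \eqref{eq:mcGWmcFnorm} established in the previous step. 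The block case \eqref{eq:minorNorm2} is identical with $\mbf{G}^{(j-1)}_{\bs\lambda}$ and $\mbf{F}^{(j-1)}_{\bs\lambda}(L)$ in place of $\mc{G}^{(j-1)}$ and $\mc{F}^{(j-1)}$.

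The main obstacle is the interleaved nature of the inductions: \eqref{eq:minorNorm1} at level $j$ draws on the minor version of \Cref{lem:normBound} at level $j-1$, which must be proved first as a standalone statement before feeding back into the Neumann series. One must also verify that the accumulating $(N\eta_{E,t})^{1/2n_\epsilon}$ factors do not blow up through $k$ iterations, which is fine because $k$ is bounded and each step costs only a multiplicative constant. The final estimate \eqref{eq:HWnorm} stands apart: it relies on the explicit form of $H^{(j)}_u$ (introduced later in the proof) and would follow from a Taylor expansion around $u_{E,t}$ combined with a minor analogue of \eqref{eq:C3.1} or \eqref{eq:C3.2} exploiting the tracelessness in $\ac{W}_2^{(j)}$.
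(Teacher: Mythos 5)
Your overall scheme (an interleaved induction in $j$, base case \Cref{lem:normEstimates}, bridge via \Cref{lem:minorresolvent} and the lower bound \eqref{eq:minorNorm} on $\mc{E}_{j}$) is reasonable, but it is a genuinely different route from the paper's, and its central step is asserted in a form that does not give the stated bounds. The paper never proves a projected analogue of \Cref{lem:normBound}. Instead it propagates the composite norms themselves: using \Cref{lem:minorresolvent} to write the padded minor resolvents as $(1-\mc{F}^{(l-1)}V_{l}(V_{l}^{*}\mc{F}^{(l-1)}V_{l})^{-1}V_{l}^{*})\mc{F}^{(l-1)}$ (and likewise for $\mc{G}^{(l)}$), it bounds $\|\mc{G}^{(l)}(\sigma_{Y}\otimes\ac{W}^{(l)}_{2})\mc{F}^{(l)}\|$ by a constant times $\|\mc{G}^{(l-1)}(\sigma_{Y}\otimes\ac{W}^{(l-1)}_{2})\mc{F}^{(l-1)}\|$, alternating this with the bound on $(V_{l}^{*}\mc{F}^{(l-1)}V_{l})^{-1}$ obtained from the resolvent identity; the block case runs identically, and \eqref{eq:HWnorm} is reduced to the unprojected trace $\tr(G\ac{W}_{2}G^{*})^{4}$ via $\tr(XY)^{4}\leq\|Y\|^{4}\tr X^{4}$ together with an extension of \eqref{eq:minorNorm1} to $|u-u_{E,t}|<\sqrt{t/N}\log N$ by the resolvent identity, so no projected versions of \eqref{eq:C3.1}--\eqref{eq:C3.2} or of \eqref{eq:productNorm1} are ever needed.

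The soft spot in your plan is precisely the projected version of \eqref{eq:productNorm1}. After expanding each padded $G^{(j)}$ as $G^{(j-1)}-G^{(j-1)}\mbf{v}_{j}(\mbf{v}_{j}^{*}G^{(j-1)}\mbf{v}_{j})^{-1}\mbf{v}_{j}^{*}G^{(j-1)}$, bounding the rank-one correction by its operator norm $C/\eta_{E,t}$ and the adjacent $\ac{W}_{2}$ by $\|\ac{W}_{2}\|\leq C$ (which is what your wording suggests) controls a term with one insertion only by $C(N\eta_{E,t})^{(m-1)/2n_{\epsilon}}\eta_{E,t}^{-m/2-3/2}$, missing the target $\eta_{E,t}^{-m/2-1}$ by at least $\eta_{E,t}^{-1/2}\sim t^{-1/2}$, with a further loss for each additional insertion. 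The expansion does work, but only if you group the terms so that each insertion contributes exactly the scalar $(\mbf{v}_{j}^{*}G^{(j-1)}\mbf{v}_{j})^{-1}$, bounded by $C\eta_{E,t}$ on $\mc{E}_{j}$, against quadratic forms $\mbf{v}_{j}^{*}(G^{(j-1)}\ac{W}_{2})^{a}G^{(j-1)}\mbf{v}_{j}$ that are absorbed into the level-$(j-1)$ bound; then the exponents balance and no standalone $\|\ac{W}_{2}\|$ estimate is needed. Two smaller points: $V_{j}^{*}\mc{G}^{(j-1)}V_{j}$ is not block-diagonal --- it equals $i\Im(\mbf{v}_{j}^{*}G^{(j-1)}\mbf{v}_{j})+\Re(\mbf{v}_{j}^{*}G^{(j-1)}\mbf{v}_{j})\sigma_{X}$ --- though its inverse still has norm $|\mbf{v}_{j}^{*}G^{(j-1)}\mbf{v}_{j}|^{-1}\leq C\eta_{E,t}$, so your conclusion stands; and for \eqref{eq:HWnorm} note that $\mc{E}_{j}$ only controls $\mbf{v}_{j}^{*}G^{(j-1)}\mbf{v}_{j}$ at $\lambda_{E,t}$, so you must extend the inverse bounds to nearby $u$ by the resolvent identity (as the paper does) rather than lean on projected trace bounds that you have not established.
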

\begin{proof}
First we note that
\begin{align}
    \left\|(V_{l}^{*}\mc{G}^{(l-1)}V_{l})^{-1}\right\|&=\frac{1}{|\mbf{v}_{l}^{*}G^{(l-1)}\mbf{v}_{l}|^{2}}\left\|\begin{pmatrix}-i\Im\mbf{v}_{l}^{*}G^{(l-1)}\mbf{v}_{l}&\Re\mbf{v}_{l}^{*}G^{(l-1)}\mbf{v}_{l}\\\Re\mbf{v}_{l}^{*}G^{(l-1)}\mbf{v}_{l}&-i\Im\mbf{v}_{l}^{*}G^{(l-1)}\mbf{v}_{l}\end{pmatrix}\right\|\nonumber\\
    &\leq\frac{1}{|\mbf{v}_{l}^{*}G^{(l-1)}\mbf{v}_{l}|}\nonumber\\
    &\leq C\eta_{E,t},\label{eq:VmcGVnorm}
\end{align}
by \eqref{eq:minorNorm}. We will prove the following implications:
\begin{enumerate}
\item
\begin{align}
    &\left\|\mc{G}^{(l-1)}(\sigma_{Y}\otimes\ac{W}^{(l-1)}_{2})\mc{F}^{(l-1)}\right\|\leq\frac{C(N\eta_{E,t})^{1/2n_{\epsilon}}}{\eta_{E,t}^{3/2}}\nonumber\\&\Rightarrow\left\|(V_{l}^{*}\mc{F}^{(l-1)}V_{l})^{-1}\right\|\leq C\eta_{E,t};\label{eq:i1}
\end{align}
\item
\begin{align}
    &\left\|(V_{l}^{*}\mc{F}^{(l-1)}V_{l})^{-1}\right\|\leq C\eta_{E,t}\nonumber\\&\Rightarrow\left\|\mc{G}^{(l)}(\sigma_{Y}\otimes\ac{W}^{(l)}_{2})\mc{F}^{(l)}\right\|\leq C\left\|\mc{G}^{(l-1)}(\sigma_{Y}\otimes\ac{W}^{(l-1)}_{2})\mc{F}^{(l-1)}\right\|.\label{eq:i2}
\end{align}
\end{enumerate}
Then \eqref{eq:minorNorm1} and \eqref{eq:mcGWmcFminorNorm} follow by induction and the fact that for $l=1$ we have
\begin{align*}
    \left\|\mc{G}(\sigma_{Y}\otimes\ac{W}_{2})\mc{F}\right\|&\leq\frac{C(N\eta_{E,t})^{1/2n_{\epsilon}}}{\eta_{E,t}^{3/2}},
\end{align*}
by \eqref{eq:mcGWmcFnorm}. 

The first implication follows by the resolvent identity and \eqref{eq:VmcGVnorm}:
\begin{align*}
    \left\|(V_{l}^{*}\mc{F}^{(l-1)}V_{l})^{-1}\right\|&\leq\left\|(V_{l}^{*}\mc{G}^{(l-1)}V_{l})^{-1}\right\|\\
    &\times\left\|\left(1+i\sqrt{\tau_{N}}(V_{l}^{*}\mc{G}^{(l-1)}V_{l})^{-1}V_{l}^{*}\mc{G}^{(l-1)}(\sigma_{Y}\otimes\ac{W}^{(l-1)}_{2})\mc{F}^{(l-1)}V_{l}\right)^{-1}\right\|\\
    &\leq \frac{C\eta_{E,t}}{1-C\sqrt{\tau_{N}}\eta_{E,t}\left\|\mc{G}^{(l-1)}(\sigma_{Y}\otimes\ac{W}^{(l-1)}_{2})\mc{F}^{(l-1)}\right\|}\\
    &\leq\frac{C\eta_{E,t}}{1-C/(N\eta_{E,t})^{\frac{1}{2}\left(1-\frac{1}{n_{\epsilon}}\right)}}\\
    &\leq C\eta_{E,t}.
\end{align*}

To prove the second implication, we note that by \Cref{lem:minorresolvent} there is a unitary $U_{l}$ such that
\begin{align}
    U_{l}\begin{pmatrix}0&0\\0&\mc{G}^{(l)}\end{pmatrix}U_{l}^{*}&=\mc{G}^{(l-1)}-\mc{G}^{(l-1)}V_{l}(V_{l}^{*}\mc{G}^{(l-1)}V_{l})^{-1}V_{l}^{*}\mc{G}^{(l-1)},\label{eq:mcGminor}\\
    U_{l}\begin{pmatrix}0&0\\0&\mc{F}^{(l)}\end{pmatrix}U_{l}^{*}&=\mc{F}^{(l-1)}-\mc{F}^{(l-1)}V_{l}(V_{l}^{*}\mc{F}^{(l-1)}V_{l})^{-1}V_{l}^{*}\mc{F}^{(l-1)}.\label{eq:mcFminor}
\end{align}
Then we have
\begin{align*}
    \left\|\mc{G}^{(l)}(\sigma_{Y}\otimes\ac{W}^{(l)}_{2})\mc{F}^{(l)}\right\|&\leq\left\|1-\mc{G}^{(l-1)}V_{l}(V_{l}^{*}\mc{G}^{(l-1)}V_{l})^{-1}V_{l}^{*}\right\|\\
    &\times\left\|1-\mc{F}^{(l-1)}V_{l}(V_{l}^{*}\mc{F}^{(l-1)}V_{l})^{-1}V_{l}^{*}\right\|\\
    &\times\left\|\mc{G}^{(l-1)}(\sigma_{Y}\otimes\ac{W}^{(l-1)}_{2})\mc{F}^{(l-1)}\right\|.
\end{align*}
By \eqref{eq:VmcGVnorm} and the bound $\|\mc{G}^{(l-1)}\|\leq1/\eta_{E,t}$ we have
\begin{align*}
    \left\|1-\mc{G}^{(l-1)}V_{l}(V_{l}^{*}\mc{G}^{(l-1)}V_{l})^{-1}V_{l}^{*}\right\|&\leq C.
\end{align*}
Similarly, by the assumption in \eqref{eq:i2} and the bound $\|\mc{F}^{(l-1)}\|\leq 1/\eta_{E,t}$ we have
\begin{align*}
    \left\|1-\mc{F}^{(l-1)}V_{l}(V_{l}^{*}\mc{F}^{(l-1)}V_{l})^{-1}V_{l}^{*}\right\|&\leq C,
\end{align*}
which proves \eqref{eq:i2}.

The bound in \eqref{eq:mcGW2mcFminorNorm} follows in a similar way. Let $x^{(l)}$ denote the right hand side of \eqref{eq:mcGW2mcFminorNorm}; we get an extra term from the second factor of $\mc{G}^{(l)}$:
\begin{align*}
    x^{(j)}&\leq Cx^{(j-1)}+C\left\|\mc{G}^{(l-1)}(\sigma_{Y}\otimes\ac{W}^{(l-1)}_{2})\mc{G}^{(l-1)}V_{l}(V_{l}^{*}\mc{G}^{(l-1)}V_{l})^{-1}V_{l}^{*}\mc{G}^{(l-1)}(\sigma_{Y}\otimes\ac{W}^{(l-1)}_{2})\mc{F}^{(l-1)}\right\|\\
    &\leq Cx^{(j-1)}+C\eta_{E,t}\left\|\mc{G}^{(l-1)}(\sigma_{Y}\otimes\ac{W}^{(l-1)}_{2})\mc{G}^{(l-1)}\right\|\cdot\left\|\mc{G}^{(l-1)}(\sigma_{Y}\otimes\ac{W}^{(l-1)}_{2})\mc{F}^{(l-1)}\right\|.
\end{align*}
For the first factor in the second term we use the resolvent identity:
\begin{align*}
    \left\|\mc{G}^{(l-1)}(\sigma_{Y}\otimes\ac{W}^{(l-1)}_{2})\mc{G}^{(l-1)}\right\|&\leq\left\|\mc{G}^{(l-1)}(\sigma_{Y}\otimes\ac{W}^{(l-1)}_{2})\mc{F}^{(l-1)}\right\|+\sqrt{\tau_{N}}x^{(j-1)}.
\end{align*}
Bounding $\|\mc{G}^{(l-1)}(\sigma_{Y}\otimes\ac{W}^{(l-1)}_{2})\mc{F}^{(l-1)}\|$ by \eqref{eq:mcGWmcFminorNorm} we arrive at
\begin{align*}
    x^{(j)}&\leq Cx^{(j-1)}+\frac{C(N\eta_{E,t})^{1/n_{\epsilon}}}{\eta_{E,t}^{2}}.
\end{align*}
Since
\begin{align*}
    \left\|(\mc{G}(\sigma_{Y}\otimes\ac{W}_{2}))^{2}\mc{F}\right\|&\leq\frac{C(N\eta_{E,t})^{1/n_{\epsilon}}}{\eta_{E,t}^{2}}
\end{align*}
by \eqref{eq:mcGW2mcFnorm}, \eqref{eq:mcGW2mcFminorNorm} follows by induction.

For \eqref{eq:minorNorm2},\eqref{eq:mbfFminorNorm} and \eqref{eq:mbfGWmbfFminorNorm} the proof follows along similar lines. We note that by \eqref{eq:minorNorm},
\begin{align}
    \left\|(\mbf{V}_{l}^{*}\mbf{G}^{(l-1)}_{\bs\lambda}\mbf{V}_{l})^{-1}\right\|&=\frac{1}{|\mbf{v}_{l}^{*}G^{(l-1)}\mbf{v}_{l}|}\nonumber\\
    &\leq C\eta_{E,t},\label{eq:VmbfGVnorm}
\end{align}
and modify the implications to the following:
\begin{enumerate}
\item
\begin{align}
    &\left\|\mbf{G}^{(l-1)}_{\bs\lambda}(L\otimes\ac{W}^{(l-1)}_{2})\mbf{F}^{(l-1)}_{\bs\lambda}(L)\right\|\leq\frac{C(N\eta_{E,t})^{1/2n_{\epsilon}}}{\eta_{E,t}^{3/2}}\nonumber\\
    &\Rightarrow\left\|(\mbf{V}_{l}^{*}\mbf{F}_{\bs\lambda}(L)\mbf{V}_{l})^{-1}\right\|\leq C\eta_{E,t},\quad\left\|\mbf{F}^{(l-1)}_{\bs\lambda}(L)\right\|\leq\frac{C}{\eta_{E,t}};\label{eq:i21}
\end{align}
\item
\begin{align}
    &\left\|(\mbf{V}_{l}^{*}\mbf{F}^{(l-1)}_{\bs\lambda}(L)\mbf{V}_{l})^{-1}\right\|\leq C\eta_{E,t},\quad\left\|\mbf{F}^{(l-1)}_{\bs\lambda}(L)\right\|\leq\frac{C}{\eta_{E,t}}\nonumber\\
    &\Rightarrow\left\|\mbf{G}^{(l)}_{\bs\lambda}(L\otimes\ac{W}^{(l)}_{2})\mbf{F}^{(l)}_{\bs\lambda}(L)\right\|\leq C\left\|\mbf{G}^{(l-1)}_{\bs\lambda}(L\otimes\ac{W}^{(l-1)}_{2})\mbf{F}^{(l-1)}_{\bs\lambda}(L)\right\|.\label{eq:i22}
\end{align}
\end{enumerate}
For the base case we use \eqref{eq:mbfGWmbfFnorm}.

Now consider \eqref{eq:HWnorm}. Recall that $H^{(j)}_{u}=\left(\eta_{E,t}^{2}+|\ac{A}^{(j)}-u|^{2}\right)^{-1}$ and let $M^{(j)}=\sqrt{H^{(j)}_{u}}\ac{W}^{(j)}_{2}\sqrt{H^{(j)}_{u}}$. Since $M^{(j)}$ is Hermitian we have
\begin{align*}
    \|M^{(j)}\|&\leq\left(\tr\left(M^{(j)}\right)^{4}\right)^{1/4}.
\end{align*}
In terms of $\mc{F}^{(j)}$ we have
\begin{align*}
    x^{(j)}&:=\tr(M^{(j)})^{4}\leq\tr\left(\mc{F}^{(j)}_{u}\left(1\otimes \ac{W}^{(j)}_{2}\right)\mc{F}^{(j)*}_{u}\right)^{4}.
\end{align*}
Let us rewrite \eqref{eq:mcFminor} as 
\begin{align*}
    U_{l}\begin{pmatrix}0&0\\0&\mc{F}^{(l)}_{u}\end{pmatrix}U_{l}^{*}&=(1-D^{(l-1)})\mc{F}^{(l-1)}_{u},
\end{align*}
where $D^{(l-1)}=\mc{F}_{u}^{(l-1)}V_{l}(V_{l}^{*}\mc{F}^{(l-1)}_{u}V_{l})^{-1}V_{l}^{*}$. We replace $\mc{F}^{(j)}_{u}$ by the right hand side above, $1\otimes \ac{W}^{(j)}_{2}$ by $1\otimes \ac{W}^{(j-1)}_{2}$ and use the inequality $\tr(XY)^{4}\leq\|Y\|^{4}\tr X^{4}$ for Hermitian $X=\mc{F}^{(j-1)}_{u}(1\otimes W^{(j-1)}_{2})\mc{F}^{(j-1)*}_{u}$ and positive $Y=(1-D^{(j-1)})(1-D^{(j-1)})^{*}$ to obtain
\begin{align*}
    x^{(j)}&\leq\tr\left((1-D^{(j-1)})\mc{F}^{(j-1)}_{u}(1\otimes W^{(j-1)}_{2})\mc{F}^{(j-1)*}_{u}(1-D^{(j-1)*})\right)^{4}\\
    &\leq(1+\|D^{(j-1)}\|)^{8}x^{(j-1)}.
\end{align*}
We can extend \eqref{eq:minorNorm1} to all $u$ satisfying $|u-u_{E,t}|<\sqrt{\frac{t}{N}}\log N$ by the resolvent identity:
\begin{align*}
    \|(V_{j}^{*}\mc{F}^{(j-1)}_{u}V_{j})^{-1}\|&=\|(1+(u-u_{E,t})(V_{j}^{*}\mc{F}^{(j-1)}V_{j})^{-1}V_{j}^{*}\mc{F}^{(j-1)}\mc{F}^{(j-1)}_{u}V_{j})^{-1}(V_{j}^{*}\mc{F}^{(j-1)}V_{j})^{-1}\|\\
    &\leq\frac{C\eta_{E,t}}{1-\frac{C|u-u_{E,t}|}{\eta_{E,t}}}\\&\leq C\eta_{E,t}.
\end{align*}
Therefore we find $\|D^{(j-1)}\|\leq C$ and hence $x^{(j)}\leq Cx^{(0)}$. Estimating $x^{(0)}$ using \eqref{eq:traceEstimate} we obtain 
\begin{align*}
    x^{(j)}&\leq C\tr(G\ac{W}_{2}G^{*})^{4}\\
    &\leq\frac{CN}{t^{5}}.
\end{align*}
\end{proof}

We need to estimate the integral over the complement of the sets $\mc{E}_{j}$, which we do by bounding the integrand uniformly in $\mbf{v}_{l},\,l=1,...,k$.
\begin{lemma}\label{lem:PsiBound}
Let $0<\delta<1$. Then
\begin{align}
    \wt{\Psi}(\mbf{z};A^{(k)})&\leq \frac{Ce^{C(Nt\delta+(Nt\delta)^{-1})}}{(Nt\delta^{2})^{k^{2}}}\prod_{j=1}^{k}\det\left(\eta_{E,t}^{2}+|\ac{A}^{(j-1)}-u_{E,t}|^{2}\right)e^{\frac{N}{t(1-\tau_{N})}\Re(u_{E,t}-E)^{2}-\frac{N}{t}\eta_{E,t}^{2}}.\label{eq:PsiBound}
\end{align}
\end{lemma}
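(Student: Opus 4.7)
The plan is to perform the Gaussian integration over $M^{(k)}$ explicitly and then factor the resulting determinant using Fischer's inequality (Lemma \ref{lem:fischerineq}), exploiting the nested projection structure $\ac{A}^{(0)}\supset\ac{A}^{(1)}\supset\cdots\supset\ac{A}^{(k)}$ coming from the partial Schur decomposition.

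First I would shift $M^{(k)}\mapsto M^{(k)}+i\sqrt{\tau_{N}}\langle W_{2}\rangle+(u_{E,t}-E)$ so that the Gaussian measure becomes centred on $\ac{A}^{(k)}+(u_{E,t}-E)$ and the spectral parameters $w_{j}$ are converted to $u_{E,t}+i\eta_{E,t}+tz_{j}/(N\eta_{E,t})$. Completing the square in the Gaussian then produces exactly the explicit exponential prefactor $\exp[\frac{N}{t(1+\tau_{N})}\Re(u_{E,t}-E)^{2}-\frac{N}{t}\eta_{E,t}^{2}]$ appearing in the bound, with the $-N\eta_{E,t}^{2}/t$ term arising from shifting in the imaginary direction (whose variance is weighted by $\tau_{N}^{-1}$).

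Next, to replace each non-Hermitian factor $|\det(M^{(k)}-w_{j})|^{2}=\det((M^{(k)}-w_{j})^{*}(M^{(k)}-w_{j}))$ by the Hermitisation $\det(\eta_{E,t}^{2}+|M^{(k)}-u_{E,t}|^{2})$, I would introduce an auxiliary Hubbard--Stratonovich regularisation at scale $\delta$ to control the difference $i\eta_{E,t}(M-M^{*})$ between the two Hermitian matrices. This yields a pointwise inequality of the form $\prod_{j}|\det(M-w_{j})|^{2}\leq C_{\delta}\prod_{j}\det(\eta_{E,t}^{2}+|M-u_{E,t}|^{2})$, in which the $\delta$-balance between Gaussian normalisation of the HS field and pointwise bounding of its integrand produces the prefactor $C_{\delta}=\exp[C(Nt\delta+(Nt\delta)^{-1})]/(Nt\delta^{2})^{k^{2}}$. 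The $M^{(k)}$-integral is now a Gaussian expectation of a product of Hermitisation determinants, which I would evaluate by writing each such determinant as a Grassmann integral and then integrating out $M^{(k)}$ to obtain a finite-dimensional determinant built from $\ac{A}^{(k)}$, $u_{E,t}$, and $\eta_{E,t}$.

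Finally, this finite-dimensional determinant is the determinant of a positive semi-definite block matrix. Using Lemma \ref{lem:minorresolvent} iteratively to peel off the projections of the partial Schur decomposition one at a time, the $j$-th diagonal block can be identified with $\eta_{E,t}^{2}+|\ac{A}^{(j-1)}-u_{E,t}|^{2}$ for $j=1,\dots,k$; Fischer's inequality then yields the claimed product. The main obstacle will be precisely this last step: identifying the correct block-matrix structure so that the diagonal blocks are the matrices $\eta_{E,t}^{2}+|\ac{A}^{(j-1)}-u_{E,t}|^{2}$ of decreasing dimensions, rather than, say, $k$ copies of $\eta_{E,t}^{2}+|\ac{A}^{(k)}-u_{E,t}|^{2}$. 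The asymmetry of the product---matrices of different sizes $N,N-1,\dots,N-k+1$---forces an inductive application of Lemma \ref{lem:minorresolvent} that tracks each successive Schur complement as one projection is stripped away, and this combinatorial bookkeeping (together with fixing the precise exponent $k^{2}$ in the polynomial factor $(Nt\delta^{2})^{-k^{2}}$) is the most delicate part of the argument.
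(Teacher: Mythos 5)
There are two genuine gaps here, and they sit exactly at the points where the real work of the lemma is done. First, your central step---a \emph{pointwise} inequality $\prod_{j}|\det(M-w_{j})|^{2}\leq C_{\delta}\prod_{j}\det\big(\eta_{E,t}^{2}+|M-u_{E,t}|^{2}\big)$ with $C_{\delta}=e^{C(Nt\delta+(Nt\delta)^{-1})}/(Nt\delta^{2})^{k^{2}}$---cannot hold: for a normal $M$ with eigenvalues $\mu_{i}$ the ratio of the two sides contains the factor $\exp\{-2\eta_{E,t}\sum_{i}\Im\mu_{i}\}$, which is unbounded over $M$, so no constant depending only on $N,t,\delta$ can work, and in any case a constant of that specific form is essentially the statement being proved. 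The actual proof cannot avoid exploiting the Gaussian average over $M^{(k)}$: the paper first dualises $\wt{\Psi}$ into an integral over $2k\times2k$ Hermitian matrices $Q$ (Lemma \ref{lem:Fsuper}, via Grassmann variables and Hubbard--Stratonovich), applies Fischer's inequality to the resulting $k\times k$ block determinant $D^{(k)}(Q)$, and then compares each diagonal block to $\det\big(\eta^{2}+|\ac{A}^{(k)}-u_{E,t}|^{2}\big)$ at the \emph{inflated} height $\eta=\sqrt{1+\delta}\,\eta_{E,t}$. The factor $e^{-\frac{N}{t}\eta_{E,t}^{2}}$, with constant exactly one (it must later cancel against $+\frac{N}{t}\eta_{E,t}^{2}$ in the bound on $\wt{K}_{j}$), comes from $e^{-\eta^{2}\tr|G_{\lambda}|^{2}}$ together with the defining relation $t\Tr{|G_{\lambda_{E,t}}|^{2}}=1$, and the $\delta$-gain $1-\frac{t}{2N\eta}\Im\tr\mc{G}_{u_{E,t}}(\eta)\geq c\delta$ is what lets the residual $Q$-Gaussian integral produce $(t/N\delta)^{2k^{2}}e^{C(Nt\delta)^{-1}}$. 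Your proposal has no mechanism for any of this; ``shifting in the imaginary direction'' of the $(N-k)$-dimensional Gaussian produces exponents proportional to $N-k$, not the per-$j$ factors $\frac{N}{t}\Re(u_{E,t}-w_{j})^{2}$ and $\frac{N}{t}\eta_{E,t}^{2}$, which in the paper arise on the $k$-dimensional dual side.

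Second, the plan to read off $\prod_{j=1}^{k}\det\big(\eta_{E,t}^{2}+|\ac{A}^{(j-1)}-u_{E,t}|^{2}\big)$ directly from Fischer's inequality with diagonal blocks of decreasing dimension is structurally misdirected. $\wt{\Psi}$ depends on $A^{(k)}$ only; the matrices $A^{(j-1)}$ for $j\leq k$ do not occur in any block decomposition of the integrand, so no choice of blocks can make them appear as diagonal blocks of a single positive semi-definite matrix. In the paper Fischer's inequality yields $k$ copies of $\det\big(\eta^{2}+|\ac{A}^{(k)}-u_{E,t}|^{2}\big)$ (shifted by entries of $Q$), and the passage to the nested product is done \emph{afterwards} by Cramer's rule and Lemma \ref{lem:minorresolvent}: each ratio $\det\big(\eta^{2}+|\ac{A}^{(k)}-u_{E,t}|^{2}\big)/\det\big(\eta_{E,t}^{2}+|\ac{A}^{(j-1)}-u_{E,t}|^{2}\big)$ equals $\det\big(1+\delta\eta_{E,t}^{2}H^{(j-1)}\big)\prod_{l\geq j}|\det V_{l}^{*}\mc{F}^{(j-1)}_{u_{E,t}}(\eta)V_{l}|$, bounded using $\det(1+X)\leq e^{\tr X}$, the trace estimate $\tr H^{(j-1)}=\frac{N}{t}+O(t^{-2})$, and crude rank-two norm bounds $C/t^{2}$ per factor; these losses in powers of $t$ and the $e^{CNt\delta}$ are then absorbed into the stated prefactor. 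So the bookkeeping you flag as ``delicate'' is not a refinement of your scheme---it requires reorganising the argument along the lines above.
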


\begin{lemma}\label{lem:KBound}
We have
\begin{align}
    \wt{K}_{j}(z;A^{(j-1)})&\leq \frac{C}{\sqrt{t}}\det^{-1}\left(\eta_{E,t}^{2}+|\ac{A}^{(j-1)}-u_{E,t}|^{2}\right)e^{-\frac{N}{t(1-\tau_{N})}\Re(u_{E,t}-E)^{2}+\frac{N}{t}\eta_{E,t}^{2}}.\label{eq:Kbound}
\end{align}
\end{lemma}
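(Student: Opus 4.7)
The strategy is a Gaussian-envelope argument. I would first rewrite $S_j$ purely in terms of the traceless matrix $\ac{A}^{(j-1)}$, using the identity $\|\mbf{c}_j\|^2 = \|\ac{A}^{(j-1)}\mbf{v}_j\|^2 - |\ac{a}_j|^2$ with $\ac{a}_j := \mbf{v}_j^*\ac{A}^{(j-1)}\mbf{v}_j$, and absorbing the shift $w_j \mapsto \tilde{w}_j := w_j - i\sqrt{\tau_N}\langle W_2\rangle$ into the target value. Completing the square around $u_{E,t}$ then decomposes $S_j$ as the sum of a main quadratic form $\frac{N}{t(1+\tau_N)}\|(\ac{A}^{(j-1)}-u_{E,t})\mbf{v}_j\|^2$, a positive residual $\frac{N}{2t\tau_N}(\Im\tilde{w}_j - \Im\ac{a}_j)^2$, and two bounded-magnitude quartic corrections in $(\Re\ac{a}_j - u_{E,t})^2$ and $(\Im\ac{a}_j)^2$ with coefficients of order $N/t$.

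Second, I would linearize the two quartic pieces via the Hubbard--Stratonovich identities $e^{+\alpha q^2}=\sqrt{\alpha/\pi}\int_{\mbb{R}}e^{-\alpha r^2+2\alpha rq}\,dr$ and $e^{-\alpha q^2}=\sqrt{\alpha/\pi}\int_{\mbb{R}}e^{-\alpha r^2+2i\alpha rq}\,dr$, valid for $\alpha>0$, introducing one auxiliary real Gaussian integral per quartic term. After this the $\mbf{v}_j$-dependence of $e^{-S_j}$ is a pure Gaussian $e^{-\mbf{v}_j^* Q(r,p)\mbf{v}_j}$, whose Hermitian part is a linear combination of $\ac{A}^{(j-1)*}\ac{A}^{(j-1)}$, $W_1^{(j-1)}$ and $\ac{W}_2^{(j-1)}$; at the saddle point of the subsequent $(r,p)$-integration these combinations reassemble into a scalar multiple of $\eta_{E,t}^2 + |\ac{A}^{(j-1)}-u_{E,t}|^2$, up to commutator-type corrections of size $O(\sqrt{\tau_N})$ controlled by the norm bounds of \Cref{lem:normEstimates}. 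The spherical Gaussian integral is then handled by the sharp bound
\begin{align*}
\int_{S^{N-j}} e^{-\mbf{v}^* H\mbf{v}}\,dS(\mbf{v}) &\leq \frac{2(N-j)\pi^{N-j}}{\det H},
\end{align*}
which follows from the polar identity $\tfrac12\int_0^\infty s^{N-j-1}\int_{S^{N-j}} e^{-s\mbf{v}^*H\mbf{v}}\,dS\,ds = \pi^{N-j}/\det H$ combined with monotonicity in $s$, and produces the target factor $\det^{-1}(\eta_{E,t}^2 + |\ac{A}^{(j-1)}-u_{E,t}|^2)$.

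Finally, the residual Gaussian integrals in the HS variables and in the $\Im\tilde{w}_j - \Im\ac{a}_j$ direction are evaluated by completing the square. The one-dimensional Gaussian of width $\sqrt{t/N}$ associated to the HS variable for $(\Re\ac{a}_j - u_{E,t})^2$ contributes the $C/\sqrt{t}$ prefactor, while its value at the saddle, combined with the defining equation $\lambda_{E,t}=E+tm_N(\lambda_{E,t})$ from \Cref{thm1}, produces the exponential $e^{-N(u_{E,t}-E)^2/[t(1-\tau_N)] + N\eta_{E,t}^2/t}$. The main obstacle I anticipate is the bookkeeping of the non-commutative corrections in the second step, namely ensuring that all deviations from the clean Hermitised structure $\eta_{E,t}^2 + |\ac{A}^{(j-1)}-u_{E,t}|^2$ contribute only multiplicatively to $C$; this should follow from $\sqrt{\tau_N}=O(N^{-1/2})$ together with the trace and norm control in \Cref{lem:normEstimates} and \Cref{lem:traceEstimates}.
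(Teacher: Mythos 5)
Your first two steps essentially re-derive the representation that the paper establishes separately in \Cref{lem:K}: the two Hubbard--Stratonovich variables you introduce are the paper's auxiliary $u$ and $v$, so up to that point you are on the same track. The genuine gap is in how you dispose of the spherical integral. Your bound $\int_{S^{N-j}}e^{-\mbf{v}^{*}H\mbf{v}}dS\leq 2(N-j)\pi^{N-j}/\det H$ is a correct inequality (after taking absolute values so that only the positive Hermitian part $H\propto\eta_{E,t}^{2}+|\ac{A}^{(j-1)}-u|^{2}$ remains), but it is far too lossy here. By \Cref{lem:sphericalint} the spherical integral equals $\pi^{N-j-1}\int_{\mbb{R}}e^{ip}\det^{-1}(H+ip)\,dp$, and for the matrices at hand this one-dimensional integrand decays on the scale $|p|\sim\sqrt{t^{3}/N}$ (after rescaling) because $\tr\big(H^{(j-1)}_{u}\big)^{2}\geq CN/t^{3}$, a bound obtained by interlacing and \Cref{lem:traceEstimates}; this is exactly the paper's estimate $|h_{j}(u)|\leq C\sqrt{t^{3}/N}$, and it is what makes the final constant $C/\sqrt{t}$ come out. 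Your monotonicity-in-$s$ bound replaces this small factor by a factor of order $N$, i.e.\ it overestimates the spherical integral by roughly $\sqrt{N/t}$. There is no slack to absorb this: the budget producing $1/\sqrt{t}$ is the prefactor $N/t^{3}$ (from $1/(\eta_{E,t}^{2}\sqrt{\tau_{N}})$), a $\sqrt{t}$ from the $v$-integral, the $\sqrt{t^{3}/N}$ from the $p$-decay, and a $\sqrt{t/N}$ from the $u$-integral, so with your estimate one lands at best at $C\sqrt{N}/t$, not the stated $C/\sqrt{t}$.

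A secondary issue is that evaluating "at the saddle" is not an upper-bound argument. The Gaussian in your real HS variable is centred near $E$, which is at distance of order $t\gg\sqrt{t/N}$ from $u_{E,t}$, so the factor $e^{-\frac{N}{t(1-\tau_{N})}(u_{E,t}-E)^{2}}$ in the target is exponentially small and must be recovered from the growth of the determinant ratio as $u$ moves from $E$ towards $u_{E,t}$. The paper controls this globally: it proves $\phi(u)-\phi(u_{E,t})\geq C(u-u_{E,t})^{2}/t$ uniformly in $u\in\mbb{R}$ using \eqref{eq:C1.2}, and it handles the mismatch between $\ac{A}^{(j-1)}$ and $W_{1}$ in the determinants by Cramer's rule, rank-two perturbation bounds and \eqref{eq:det1}, yielding $r_{j}(u)\leq C(1+|u-u_{E,t}|/t)^{2}$; your appeal to "commutator-type corrections of size $O(\sqrt{\tau_{N}})$" does not substitute for these steps. (Incidentally, a weaker bound like $C\sqrt{N}/t$ would still suffice for the way the lemma is used downstream, since only polynomial growth is needed against the $e^{-C\log^{2}N}$ tail of \Cref{lem:nuConc}, but it is not the bound you set out to prove.)
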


Combining these lemmas we obtain for any $0<\delta<1$ the bound
\begin{align*}
    \left(\prod_{j=1}^{k}\wt{K}_{j}(z_{j};A^{(j-1)})\right)\wt{\Psi}(\mbf{z};A^{(k)})&\leq\frac{Ce^{C(Nt\delta+(Nt\delta)^{-1})}}{(Nt\delta^{2})^{k^{2}}\sqrt{t}}.
\end{align*}
Choosing $\delta=1/Nt$ we obtain
\begin{align}
    \left(\prod_{j=1}^{k}\wt{K}_{j}(z_{j};A^{(j-1)})\right)\wt{\Psi}(\mbf{z};A^{(k)})&\leq \frac{C(Nt)^{k^{2}}}{\sqrt{t}}.\label{eq:upper}
\end{align}
Since this is indepedent of $\mbf{v}_{j},\,j=1,...,k$, and grows as $N^{C}$ for some fixed $C$, the integral over $S^{N-j}\setminus\mc{E}_{j}$ is dominated by the $O(e^{-C\log^{2}N})$ decay of the probability. This allows us to restrict the integral to the sets $\mc{E}_{j}$ (we do this step by step, starting from $j=1$):
\begin{align}
    \wt{\rho}^{(k)}_{N}(\mbf{z};A)&=\int_{\mc{E}_{1}}\cdots\int_{\mc{E}_{k}}\left(\prod_{j=1}^{k}\wt{K}_{j}(z_{j};A^{(j-1)})\right)\wt{\Psi}(\mbf{z};A^{(k)})d\nu_{k}(\mbf{v}_{k})\cdots d\nu_{1}(\mbf{v}_{1})+O(e^{-C\log^{2}N}).\label{eq:rhoRestricted}
\end{align}

When $\mbf{v}_{j}\in\mc{E}_{j}$, we can use the properties that we derived in \Cref{lem:minornorms} to calculate the asymptotics of $\wt{\Psi}$ and $\wt{K}_{j}$. For this we define the following functions for $j=1,...,k$:
\begin{align}
    \psi_{j}(A^{(j-1)})&=|1-t\langle G^{2}\rangle|\left|\det(W_{1}-\lambda_{E,t})\right|^{2}\prod_{l=1}^{j-1}|\mbf{v}_{l}^{*}G^{(l-1)}\mbf{v}_{l}|^{2}\nonumber\\&\exp\left\{\alpha_{E,t}+\beta_{E,t}+\frac{N}{t(1-\tau_{N})}\Re(u_{E,t}-w_{j})^{2}-\frac{N\eta_{E,t}^{2}}{t(1+\tau_{N})}\right\}.\label{eq:psi_j}
\end{align}
The asymptotics are given in the following two lemmas.
\begin{lemma}\label{lem:PsiAsymp}
We have
\begin{align}
    \wt{\Psi}\left(\mbf{z};A^{(k)}\right)&=\left[1+O\left(\frac{1}{(Nt)^{\frac{1}{4}}}\right)\right]\det\left[\frac{1}{(2\pi)^{3/2}}\int_{-1}^{1}e^{-2\tau_{E,t}\lambda^{2}-i(z_{j}-\bar{z}_{l})\lambda}d\lambda\right]\nonumber\\&\prod_{j=1}^{k}\left|\frac{2\eta_{E,t}}{1-t\langle G^{2}\rangle}\mbf{v}_{j}^{*}G^{(j-1)}\mbf{v}_{j}\right|^{2j}\psi_{j}(A^{(j-1)}),\label{eq:PsiAsymp}
\end{align}
uniformly in $\mbf{v}_{j}\in\mc{E}_{j},\,j=1,...,k$ and compact subsets of $\mbb{C}^{k}$.
\end{lemma}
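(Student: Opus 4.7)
The plan is to compute the Gaussian integral defining $\Psi$ in \eqref{eq:F} by a Hubbard--Stratonovich (or Grassmann--supersymmetric) representation of $\prod_{j=1}^{k}|\det(M^{(k)}_{t}-w_j)|^{2}$ followed by a saddle-point analysis on a resulting $2k\times 2k$ matrix integration. First I would write $|\det(M^{(k)}_{t}-w_j)|^{2}=\det[(M^{(k)}_{t}-w_j)(M^{(k)}_{t}-w_j)^{*}]$, represent each factor as a Grassmann integral, and carry out the Gaussian integration over $B^{(k)}=(M^{(k)}_{t}-A^{(k)})/\sqrt{t}$ against its density. The resulting quartic Grassmann action is decoupled by introducing a Hermitian $2k\times 2k$ matrix $L$; integrating out the Grassmanns then yields a representation of $\wt{\Psi}(\mbf{z};A^{(k)})$ as an integral over $L\in\mbf{M}^{sa}_{2k}$ whose integrand consists of a Gaussian weight $\exp\{-\tfrac{N}{t}\tr Q(L)\}$ times a determinant naturally built from the block resolvent $\mbf{F}^{(k)}_{\bs\lambda}(L)$ of \eqref{eq:mbF}, where $\bs\lambda\in\{\lambda_{E,t},\bar\lambda_{E,t}\}^{2k}$ encodes the Hermitisation of the arguments $w_j$.

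The second step is a saddle-point analysis in $L$. The saddle equation is precisely the self-consistent relation $\lambda=E+t\langle G(\lambda)\rangle$ defining $\lambda_{E,t}$, with saddle value $L_{\star}$ a block-diagonal $2k\times 2k$ matrix built from $\{\lambda_{E,t},\bar\lambda_{E,t}\}$ paired so that each $w_j$ is matched with $\bar w_j$. Shifting $L=L_{\star}+\sqrt{t/N}\,\tilde L$ and expanding the determinant via \eqref{eq:det1} of \Cref{lem:detEstimates} produces, to leading order: (i) the prefactor $|\det(W_{1}-\lambda_{E,t})|^{2k}$ from the zeroth-order determinant, (ii) the exponential $\exp\{k(\alpha_{E,t}+\beta_{E,t})\}$ from the correction $\tfrac{N\tau_{N}}{2}\sum_{i,j}\langle G_{z_{i}}\ac{W}_{2}G_{z_{j}}\ac{W}_{2}\rangle$ in \eqref{eq:det1}, (iii) the Gaussian weight $\exp\{\tfrac{N}{t(1-\tau_{N})}\Re(u_{E,t}-w_j)^{2}-N\eta^{2}_{E,t}/(t(1+\tau_{N}))\}$ from completing the square at $L_{\star}$, and (iv) a residual Gaussian integral in $\tilde L$ whose covariance combines $\beta_{E,t}$ with $N\tau_{N}\eta_{E,t}^{2}/t$ to form exactly $2\tau_{E,t}$ by \eqref{eq:tauEt}. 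The spectrum of the rank-$k$ projector intertwined with $L$ confines the fluctuation eigenvalues to $[-1,1]$, which is how the compact range in $\int_{-1}^{1}e^{-2\tau_{E,t}\lambda^{2}-i(z_j-\bar z_l)\lambda}d\lambda$ arises.

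Finally, the Vandermonde factor $|\Delta(\mbf{w})|^{2}$ under the rescaling $w_j=E+i\sqrt{\tau_{N}}\langle W_{2}\rangle+tz_{j}/(N\eta_{E,t})$ contributes the prefactor $\bigl(t/(N\eta_{E,t})\bigr)^{k(k-1)}$. The recurrence \eqref{eq:Arecurrence} combined with the Schur-complement identity $\det(W_{1}^{(l-1)}-\lambda_{E,t})=(\mbf{v}_l^{*}G^{(l-1)}\mbf{v}_l)^{-1}\det(W_{1}^{(l)}-\lambda_{E,t})$ iterates to decompose $|\det(W_{1}-\lambda_{E,t})|^{2k}$ into $|\det(W_{1}^{(k)}-\lambda_{E,t})|^{2k}$ times a cumulative product of $|\mbf{v}_l^{*}G^{(l-1)}\mbf{v}_l|^{2}$; these combine with the $j$-dependent factors $|1-t\langle G^{2}\rangle|$ and the extra powers of $|\mbf{v}_j^{*}G^{(j-1)}\mbf{v}_j|$ arising from the $j$-step saddle expansion to reproduce both the $\psi_j(A^{(j-1)})$ factors and the $|2\eta_{E,t}/(1-t\langle G^{2}\rangle)\cdot\mbf{v}_j^{*}G^{(j-1)}\mbf{v}_j|^{2j}$ factors in \eqref{eq:PsiAsymp}. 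The main obstacle is the quantitative control of the fluctuation integral: one must localize $L$ to an $O(\sqrt{t/N}\log N)$ neighborhood of $L_{\star}$ using the uniform bounds of \Cref{lem:minornorms} on $\mbf{F}^{(k)}_{\bs\lambda}(L)$ (made possible by $\mbf{v}_l\in\mc{E}_l$ via \Cref{lem:nuConc}), expand the exponent to third order in $\tilde L$, and show that the cubic corrections yield an $O((Nt)^{-1/4})$ error uniformly on compact subsets of $\mbf{z}$; this is the weakest rate attainable by combining \Cref{lem:detEstimates} with the tail bound of \Cref{lem:PsiBound}.
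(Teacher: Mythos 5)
Your overall route is the paper's: the representation you sketch in your first step is exactly \Cref{lem:Fsuper}, and your peeling of the $|\mbf{v}_{j}^{*}G^{(j-1)}\mbf{v}_{j}|$ factors by Cramer's rule/Schur complements, the use of \Cref{lem:minornorms} (valid on the events of \Cref{lem:nuConc}), and the localisation of the auxiliary matrix to an $O(\sqrt{t/N}\log N)$ neighbourhood all match the actual proof. The genuine gap is in your second step. You treat the $2k\times2k$ integral as a Laplace expansion about a single isolated saddle $L_{\star}$ with Gaussian fluctuations $\tilde L$, and you attribute $\tau_{E,t}$ to the covariance of that fluctuation integral. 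In the weakly non-Hermitian regime this is not the mechanism, and a quadratic-plus-cubic expansion around one $L_{\star}$ cannot produce the limit: the critical point is degenerate along the whole coset manifold $U(2k)/U(k)\times U(k)$ (the choice of which $k$-dimensional subspace carries $+i\eta_{E,t}$ and which $-i\eta_{E,t}$), because the terms that break this symmetry are only $O(1)$ there, namely $\tfrac{\beta_{E,t}}{2}\tr(UL_{s}U^{*}L)^{2}$ coming from $\det\big(1+i\sqrt{\tau_{N}}\mbf{G}_{\bs\lambda}(U^{*}LU\otimes\ac{W}_{2})\big)$ via \eqref{eq:det1}, and $\tfrac{N\tau_{N}\eta_{E,t}^{2}}{2t}\tr(LUL_{s}U^{*})^{2}$ coming from $g(Q)$ in \eqref{eq:g}. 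The paper therefore integrates \emph{exactly} over this manifold: the radial variables give a Mehta-type Gaussian integral (the constant $f_{N,k}$ in \eqref{eq:f_Nk}, whose variance is $1-t\langle G^{2}\rangle$, not $\tau_{E,t}$), while the angular integral $I(\mbf{z})$ in \eqref{eq:Iz} is evaluated by parametrising the coset with angles $\bs\theta$ and applying the Harish-Chandra--Itzykson--Zuber formula twice; the substitution $\lambda=\cos2\theta\in[-1,1]$ is what produces the compact range, the HCIZ determinants $\det[e^{a_{j}b_{l}}]$ are what produce the determinantal structure $\det\big[\int_{-1}^{1}e^{-2\tau_{E,t}\lambda^{2}-i(z_{j}-\bar z_{l})\lambda}d\lambda\big]$, and the combination $\beta_{E,t}+N\tau_{N}\eta_{E,t}^{2}/t=\tau_{E,t}$ of \eqref{eq:tauEt} enters as the coefficient of $\tr(ULU^{*}L)^{2}$ on the manifold, not as a fluctuation covariance. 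Your sentence about the spectrum of the rank-$k$ projector gestures at this, but it conflates the saddle-manifold coordinates with the $O(\sqrt{t/N})$ fluctuations, and without the HCIZ step your scheme never yields the determinant over the indices $j,l$ at all.

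A second omission: the critical points are classified by the signature $s$ of $Q$ (how many of the $2k$ Hermitised components sit at $+i\eta_{E,t}$ rather than $-i\eta_{E,t}$). You fix the pairing ``each $w_{j}$ matched with $\bar w_{j}$'' without justification; the paper decomposes $\wt\Psi=\sum_{s}\binom{2k}{k+s}\wt\Psi_{s}$ and shows $|\wt\Psi_{s}/\wt\Psi_{0}|\leq C(t/N)^{s^{2}}$ using the Jacobian factor $J_{s}(\bs\eta)$, and some argument of this kind is required before restricting to the $s=0$ sector. These two points (the exact coset integral with HCIZ, and the signature decomposition) are the core of the proof, so as written the proposal does not establish \eqref{eq:PsiAsymp}.
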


\begin{lemma}\label{lem:KAsymp}
We have
\begin{align}
    \wt{K}_{j}(z_{j};A^{(j-1)})&=\left[1+O\left(\frac{1}{(Nt)^{\frac{1}{4}}}\right)\right]\frac{1}{\psi_{j}(A^{(j-1)})}\frac{e^{-\frac{1}{2\tau_{E,t}}y_{j}^{2}}}{\sqrt{\tau_{E,t}}},\label{eq:KAsymp}
\end{align}
uniformly in $\mbf{v}_{j}\in\mc{E}_{j},\,j=1,...,k$ and compact subsets of $\mbb{C}$.
\end{lemma}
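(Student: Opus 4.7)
The plan is to evaluate $K_{j}(w_{j};A^{(j-1)})$ by a saddle-point analysis after reducing the sphere integral to a three-dimensional contour integral via Hubbard--Stratonovich and \Cref{lem:sphericalint}. Using $\|\mbf{c}_{j}\|^{2}=\mbf{v}_{j}^{*}|A^{(j-1)}-w_{j}|^{2}\mbf{v}_{j}-|a_{j}-w_{j}|^{2}$, the exponent $-H(\mbf{v}_{j})$ decomposes as $-\tfrac{N}{t(1+\tau_{N})}\mbf{v}_{j}^{*}|A^{(j-1)}-w_{j}|^{2}\mbf{v}_{j}+c_{1}(\Re(a_{j}-w_{j}))^{2}-c_{2}(\Im(a_{j}-w_{j}))^{2}$ with $c_{1}\sim N/t$ and $c_{2}\sim N/(t\tau_{N})$. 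Linearising both squares by real Hubbard--Stratonovich variables $s,s'$ reduces the $\mbf{v}_{j}$-dependence to $\exp\!\bigl(-\mbf{v}_{j}^{*}X(s,s')\mbf{v}_{j}\bigr)$, where $X(s,s')$ is a Hermitian perturbation of $\tfrac{N}{t(1+\tau_{N})}|A^{(j-1)}-w_{j}|^{2}$.

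Next I would apply \Cref{lem:sphericalint} with $k=1$ to replace the sphere integral of this Gaussian by a one-dimensional contour integral with integrand proportional to $e^{ih}/\det(X(s,s')+ih)$. The problem then reduces to a triple Laplace integral in $(s,s',h)\in\mathbb{R}^{3}$. The saddle is at $(0,0,h_{*})$, with $h_{*}$ chosen so that the effective complex spectral parameter implicit in $X(0,0)+ih_{*}$ equals $\lambda_{E,t}$; existence and uniqueness of $h_{*}$ follow from \eqref{eq:C1.1}--\eqref{eq:C1.2} and the same monotonicity argument yielding $\lambda_{E,t}$ in \Cref{thm1}.

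At the saddle the determinant produces a factor $|\det(W_{1}^{(j-1)}-\lambda_{E,t})|^{-2}$ (up to explicit constants), and the Schur-complement identity $\det(W_{1}^{(j-1)}-\lambda)=\det(W_{1}-\lambda)\prod_{l<j}\mbf{v}_{l}^{*}G^{(l-1)}\mbf{v}_{l}$ supplies the $|\det(W_{1}-\lambda_{E,t})|^{2}\prod_{l<j}|\mbf{v}_{l}^{*}G^{(l-1)}\mbf{v}_{l}|^{2}$ factor in $\psi_{j}$. The Hessian in the $h$-direction contributes $|1-t\langle G^{2}\rangle|$ (matching the derivative $\partial_{h}\lambda$ at the saddle through the same implicit-function computation used for $\lambda_{E,t}$); the Hessian in $(s,s')$, expanded in powers of $\ac{W}_{2}$ as in \Cref{lem:detEstimates}, \eqref{eq:det2}, contributes $e^{\alpha_{E,t}+\beta_{E,t}}$; and the explicit dependence on $(x_{j},y_{j})$ yields, after completing the square, both the Gaussian $\tau_{E,t}^{-1/2}e^{-y_{j}^{2}/(2\tau_{E,t})}$ and the factor $\exp(\tfrac{N}{t(1-\tau_{N})}\Re(u_{E,t}-w_{j})^{2}-\tfrac{N\eta_{E,t}^{2}}{t(1+\tau_{N})})$ inside $\psi_{j}$. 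The variance $\tau_{E,t}$ of \eqref{eq:tauEt} is recovered exactly because $\beta_{E,t}$ and $N\tau_{N}\eta_{E,t}^{2}/t$ are the two contributions to the curvature along the $\Im(a_{j}-w_{j})$-direction.

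The hard part will be establishing the error $O(\log^{3}N/(Nt))$. This requires truncating the Hubbard--Stratonovich integrations to windows of size $O(\log N\sqrt{t/N})$ around the saddle (the complement being super-polynomially small by Gaussian tails) and bounding the cubic and higher terms in the saddle expansion of $\log\det(X(s,s')+ih)$. After differentiation, these terms reduce to traces of monomials in $G,G^{*}$ and $\ac{W}_{2}$, which are controlled by \eqref{eq:C3.1'}, \eqref{eq:C3.2'} and \Cref{lem:traceEstimates}, and to norms of resolvent--$\ac{W}_{2}$ products bounded in \Cref{lem:normEstimates}. The restriction $\mbf{v}_{l}\in\mc{E}_{l}$ for $l<j$, combined with \Cref{lem:minornorms}, is what allows these bounds to be transferred from $W_{1}$ to the minor $W_{1}^{(j-1)}$. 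Uniformity in compact subsets of $\mathbb{C}$ in $z_{j}$ then follows from the analytic dependence of the saddle-point formula on the complex parameter $w_{j}$.
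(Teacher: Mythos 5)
Your overall route is the same as the paper's: the Hubbard--Stratonovich linearisation of the $(\Re a_{j},\Im a_{j})$ squares is exactly the representation the paper derives in \Cref{lem:K}, the reduction of the sphere integral to a $\det^{-1}$ via \Cref{lem:sphericalint} is the paper's $p$-integral, the variance $\tau_{E,t}$ indeed arises as the sum of the curvatures $\beta_{E,t}$ and $N\tau_{N}\eta_{E,t}^{2}/t$ in the $\Im a_{j}$-direction, and the error control (window truncation, second-order expansion of $\log\det$, trace/norm estimates via \Cref{lem:traceEstimates}, \Cref{lem:normEstimates} and \Cref{lem:minornorms} on the events $\mc{E}_{l}$) is what the paper does.

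The one point that is genuinely off as stated is the location and bookkeeping of the saddle. The multiplier $h$ from \Cref{lem:sphericalint} adds a multiple of the identity to $X(s,s')$, so $X(0,0)+ih_{*}$ can only realise spectral parameters with real part $\Re w_{j}$; it cannot equal $\lambda_{E,t}$, whose real part $u_{E,t}$ differs from $\Re w_{j}$ by $O(t)$, and this displacement is not negligible: it is carried by the \emph{real} Hubbard--Stratonovich variable (the paper's Laplace analysis of $\phi$ in \eqref{eq:phi} with critical point $u_{E,t}$) and is precisely what produces the factor $e^{\frac{N}{t(1-\tau_{N})}\Re(u_{E,t}-w_{j})^{2}}$ of size $e^{\Theta(Nt)}$ in $\psi_{j}$, see \eqref{eq:psi_j}. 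Relatedly, the $h$-saddle is purely imaginary, so your scheme needs a contour deformation in $h$ that the paper avoids by inserting $\eta_{E,t}^{2}$ into the quadratic form using $\|\mbf{v}_{j}\|=1$ before applying \Cref{lem:sphericalint}, after which $p$ is just a Gaussian fluctuation around $0$; and the factor $|1-t\langle G^{2}\rangle|$ does not come from the $h$-Hessian alone but from the joint $(u,p)$ Hessian including the cross term proportional to $\Im\langle G^{2}\rangle$, while $e^{\alpha_{E,t}+\beta_{E,t}}$ is the saddle-point \emph{value} of the $\ac{W}_{2}$-determinant correction (via \eqref{eq:det2}), not a Hessian contribution ($\beta_{E,t}$ enters the $v$-curvature separately). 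These are fixable, but as written the claimed saddle $(0,0,h_{*})$ and the per-direction attribution of the prefactors would not reproduce $\psi_{j}$; also note the truncation windows have different scales in the two HS directions ($\sqrt{t/N}\log N$ for the real one, but $t\log N$ for the imaginary one), and the restriction to these windows requires polynomial-in-$u$ control of the determinant ratios in the tails (the paper's bounds on $r_{j}$, $h_{j}$), not just Gaussian tails.
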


Inserting these asymptotics into \eqref{eq:rhoRestricted} we find
\begin{align*}
    \wt{\rho}^{(k)}_{N}(\mbf{z};A)&=\left[1+O\left(\frac{1}{(Nt)^{\frac{1}{4}}}\right)\right]\rho^{(k)}_{\tau_{E,t}}(\mbf{z})\\
    &\times\int_{\mc{E}_{1}}\left|\frac{2\eta_{E,t}\mbf{v}_{1}^{*}G^{(0)}\mbf{v}_{1}}{1-t\langle G^{2}\rangle}\right|^{2}\cdots\int_{\mc{E}_{k}}\left|\frac{2\eta_{E,t}\mbf{v}_{k}^{*}G^{(k-1)}\mbf{v}_{k}}{1-t\langle G^{2}\rangle}\right|^{2k}d\nu_{k}(\mbf{v}_{k})\cdots d\nu_{1}(\mbf{v}_{1}),
\end{align*}
where $\rho^{(k)}_{\tau}$ was defined in \eqref{eq:rhoTau}. By \Cref{lem:nuConc} we have
\begin{align*}
    \int_{\mc{E}_{j}}\left|\frac{2\eta_{E,t}\mbf{v}_{j}^{*}G^{(j-1)}\mbf{v}_{j}}{1-t\langle G^{2}\rangle}\right|^{2j}d\nu_{j}(\mbf{v}_{j})&=1+O\left(\frac{\log N}{\sqrt{Nt}}\right),
\end{align*}
and so
\begin{align*}
    \wt{\rho}^{(k)}_{N}(\mbf{z};A)&=\left[1+O\left(\frac{1}{(Nt)^{\frac{1}{4}}}\right)\right]\rho^{(k)}_{\tau_{E,t}}(\mbf{z}).
\end{align*}
Since we have assumed that $\tau_{E,t}\to\tau_{E}$, it is clear from the expression for $\rho^{(k)}_{\tau_{E,t}}$ in \eqref{eq:rhoTau} that $\rho^{(k)}_{\tau_{E,t}}\to\rho^{(k)}_{\tau_{E}}$ uniformly in compact subsets of $\mbb{C}^{k}$. In \Cref{sec:Psi} we prove \Cref{lem:PsiBound,lem:PsiAsymp} and in \Cref{sec:K} we prove \Cref{lem:nuConc,lem:KBound,lem:KAsymp}.

\section{Analysis of $\wt{\Psi}(\mbf{z};A^{(k)})$}\label{sec:Psi}
The proofs of \Cref{lem:PsiBound} and \Cref{lem:PsiAsymp} will be based on the following representation of $F$ as an integral over $k\times k$ matrices, which is obtained by the supersymmetry method. This falls into the class of so-called ``duality formulas" which have been proved for expectation values of products of characteristic polynomials for various ensembles (see e.g. \cite{nishigaki_replica_2002,grela_diffusion_2016,liu_phase_2022} for the Ginibre ensemble). For this we introduce the matrices
\begin{align}
    Z&=\diag(\mbf{z},\bar{\mbf{z}}),\label{eq:Z}\\
    L&=\diag(1_{k},-1_{k}).\label{eq:L}
\end{align}
\begin{lemma}\label{lem:Fsuper}
We have the identity
\begin{align}
    \wt{\Psi}\left(\mbf{z};A^{(k)}\right)&=d_{N,k}|\Delta(\mbf{z})|^{2}e^{\frac{N}{t(1-\tau_{N})}\sum_{j=1}^{k}\Re(u_{E,t}-w_{j})^{2}}\int_{\mbf{M}^{sa}_{2k}}e^{-\frac{N}{2t}\tr Q^{2}}D^{(k)}(Q)g(Q)dQ,\label{eq:Fsuper}
\end{align}
where
\begin{align}
    D^{(k)}(Q)&=\det\left[1_{2k}\otimes W_{1}^{(k)}+i\sqrt{\tau_{N}}L\otimes \ac{W}^{(k)}_{2}-(u_{E,t}+iQ)\otimes1_{N-k}\right],\label{eq:Dk}\\
    g(Q)&=\exp\left\{-\frac{iN}{t(1-\tau_{N})}\tr\left(E-u_{E,t}+\frac{t}{N\eta_{E,t}}Z\right)Q\right.\nonumber\\&\left.-\frac{N\tau^{2}_{N}}{2t(1-\tau^{2}_{N})}\tr Q^{2}-\frac{N\tau_{N}}{2t(1-\tau_{N}^{2})}\tr(LQ)^{2}\right\},\label{eq:g}
\end{align}
and
\begin{align}
    d_{N,k}&=\frac{N^{k^{2}}}{(2\pi)^{3k/2}\pi^{2k^{2}}t^{k^{2}}\eta_{E,t}^{k(k-1)}(1-\tau_{N}^{2})^{k^{2}}}.\label{eq:dNk}
\end{align}
\end{lemma}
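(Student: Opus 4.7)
\textbf{Proof plan for Lemma~\ref{lem:Fsuper}.} The natural route is a supersymmetric Grassmann\,$+$\,Hubbard--Stratonovich calculation adapted to the weakly non-Hermitian Gaussian weight on $M^{(k)}$: the goal is to trade the Gaussian integral over the $(N-k)\times(N-k)$ matrix $M^{(k)}$ appearing in \eqref{eq:F} for a Gaussian-type integral over the $2k\times2k$ Hermitian auxiliary matrix $Q$ in \eqref{eq:Fsuper}.

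First I would linearize the product of absolute-value determinants by introducing, for each $j=1,\dots,k$, Grassmann vectors $\psi_{j},\chi_{j}\in\mbb{C}^{N-k}$ so that
\begin{equation*}
|\det(M^{(k)}-w_{j})|^{2}=\int e^{-\bar\psi_{j}(M^{(k)}-w_{j})\psi_{j}-\bar\chi_{j}(M^{(k)*}-\bar w_{j})\chi_{j}}\,d\bar\psi_{j}d\psi_{j}d\bar\chi_{j}d\chi_{j}.
\end{equation*}
Substituting this into \eqref{eq:F} turns the $M^{(k)}$ integral into a Gaussian with mean $A^{(k)}$ and asymmetric variances ($t/N$ for $\Re M^{(k)}$ and $t\tau_{N}/N$ for $\Im M^{(k)}$), coupled linearly to the Grassmann bilinears $\psi_{j}\bar\psi_{j},\chi_{j}\bar\chi_{j}$. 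Completing the square integrates $M^{(k)}$ out exactly and yields an effective action that is linear in $A^{(k)}$ plus quartic in the Grassmann variables; the asymmetry between the two variances splits this quartic into combinations of $(\psi\bar\psi+\chi\bar\chi)$- and $(\psi\bar\psi-\chi\bar\chi)$-type bilinears, which is precisely what will generate the two distinct quadratic-in-$Q$ terms in $g(Q)$.

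Next I would apply the Hubbard--Stratonovich transformation, decoupling each quartic piece by a Gaussian integral over a Hermitian $2k\times 2k$ matrix $Q$ (with the contour chosen so that the resulting $e^{-\frac{N}{2t}\tr Q^{2}}$ is convergent). The Grassmann action then becomes bilinear, with kernel
\begin{equation*}
1_{2k}\otimes(W_{1}^{(k)}-u_{E,t})+i\sqrt{\tau_{N}}L\otimes\ac{W}_{2}^{(k)}-iQ\otimes 1_{N-k},
\end{equation*}
in which $L=\diag(1_{k},-1_{k})$ encodes the opposite sign of $\Im A^{(k)}$ in the $\psi$- and $\chi$-sectors, and the shift of the spectral parameter by $i\sqrt{\tau_{N}}\langle W_{2}\rangle$ built into $w_{j}$ is what converts $W_{2}^{(k)}$ into $\ac{W}_{2}^{(k)}$ inside the kernel. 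Expanding the Gaussian weight about $u_{E,t}$ rather than $E$ absorbs the bulk of the linear-in-$M^{(k)}$ shift into the explicit prefactor $\exp\{\frac{N}{t(1-\tau_{N})}\sum_{j}\Re(u_{E,t}-w_{j})^{2}\}$ in \eqref{eq:Fsuper} and leaves behind exactly the linear-in-$Q$ coupling to $E-u_{E,t}+tZ/(N\eta_{E,t})$. Integrating out the Grassmann fields finally produces the determinant $D^{(k)}(Q)$.

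The remaining task is bookkeeping: the Gaussian normalization in front of \eqref{eq:F}, the Grassmann measure Jacobians, the rescaling Jacobian from $w_{j}$ to the bulk variable $z_{j}$ (which accounts for $|\Delta(\mbf{z})|^{2}$ and the $\eta_{E,t}^{k(k-1)}$ in $d_{N,k}$), and the $\tau_{N}$-dependent Gaussian prefactors generated by the $Q$-integral must combine into the stated $d_{N,k}$ and the $e^{-\frac{N}{2t}\tr Q^{2}}$ weight. The main obstacle --- conceptually routine but computationally delicate --- is verifying that the asymmetric Gaussian over $M^{(k)}$ produces \emph{exactly} the two quadratic-in-$Q$ terms $-\frac{N\tau_{N}^{2}}{2t(1-\tau_{N}^{2})}\tr Q^{2}$ and $-\frac{N\tau_{N}}{2t(1-\tau_{N}^{2})}\tr(LQ)^{2}$ appearing in $g(Q)$, and that all prefactors assemble into $d_{N,k}$ without residual error.
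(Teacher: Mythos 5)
Your plan follows essentially the same route as the paper's proof: Grassmann linearization of the characteristic polynomials, Gaussian integration over the elliptic ensemble $B^{(k)}$ (equivalently integrating out $M^{(k)}$ about $A^{(k)}$), a Hubbard--Stratonovich decoupling assembled into a $2k\times 2k$ Hermitian $Q$, Grassmann integration producing $D^{(k)}(Q)$, and a shift of the diagonal blocks $Q_{11},Q_{22}$ (a contour deformation justified by analyticity and Gaussian decay) that centres the spectral parameter at $u_{E,t}$ and produces the prefactor and the linear-in-$Q$ coupling in $g(Q)$. The bookkeeping you defer --- the block-dependent Gaussian weights whose deviation from $e^{-\frac{N}{2t}\tr Q^{2}}$ yields the two $\tau_{N}$-dependent quadratic terms, and the constants assembling into $d_{N,k}$ --- is exactly what the paper's proof carries out, so your proposal is correct in approach.
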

\begin{proof}
Recall the representation of $\Psi$ in terms of an expectation:
\begin{align*}
    \Psi(\mbf{z};A^{(k)})&=|\Delta(\mbf{z})|^{2}\mbb{E}_{B^{(k)}}\left[\prod_{j=1}^{k}\left|\det(A^{(k)}+B^{(k)}-z_{j})\right|^{2}\right],
\end{align*}
where $B^{(k)}$ has density proportional to
\begin{align*}
    e^{-\frac{N}{2t}\tr(\Re B^{(k)})^{2}-\frac{N}{2t\tau_{N}}\tr(\Im B^{(k)})^{2}}.
\end{align*}
We write each determinant as an integral over anti-commuting variables:
\begin{align*}
    \Psi(\mbf{z};A^{(k)})&=|\Delta(\mbf{z})|^{2}\int e^{-\sum_{j=1}^{k}\left[\zeta_{j}^{*}(A^{(k)}-z_{j})\zeta_{j}+\chi_{j}^{*}(A^{(k)*}-\bar{z}_{j})\chi_{j}\right]}\\
    &\times\mbb{E}_{B^{(k)}}\left[e^{\tr B^{(k)}\sum_{j=1}^{k}\zeta_{j}\zeta_{j}^{*}+\tr B^{(k)*}\sum_{j=1}^{k}\chi_{j}\chi_{j}^{*}}\right]d\zeta d\chi.
\end{align*}
The expectation over $B^{(k)}$ can be calculated using the identity
\begin{align*}
    \mbb{E}_{B^{(k)}}\left[e^{\tr B^{(k)} X+\tr B^{(k)*}Y}\right]&=e^{\frac{t}{2N}\tr\left[(1-\tau_{N})(X^{2}+Y^{2})+2(1+\tau_{N})XY\right]}.
\end{align*}
Setting $X=\sum_{j=1}^{k}\zeta_{j}\zeta_{j}^{*}$ and $Y=\sum_{j=1}^{k}\chi_{j}\chi_{j}^{*}$ we find
\begin{align*}
    \mbb{E}_{B^{(k)}}\left[e^{\tr B^{(k)}X+\tr B^{(k)*}Y}\right]&=e^{-\frac{t}{2N}\tr\left[(1-\tau_{N})(\sigma_{\zeta}^{2}+\sigma_{\chi}^{2})+2(1+\tau_{N})\sigma_{\zeta,\chi}^{*}\sigma_{\zeta,\chi}\right]},
\end{align*}
where $\sigma_{\zeta,ij}=\zeta_{i}^{*}\zeta_{j},\,\sigma_{\chi,ij}=\chi_{i}^{*}\chi_{j}$ and $\sigma_{\zeta,\chi,ij}=\zeta_{i}^{*}\chi_{j}$. Introducing auxilliary integration variables $Q_{11},\,Q_{22}\in\mbf{M}^{sa}_{k}$ and $Q_{12}\in\mbf{M}_{k}$ we have
\begin{align*}
    &e^{-\frac{t}{2N}\tr\left[(1+\tau_{N})(\sigma_{\zeta}^{2}+\sigma_{\chi}^{2})+(1-\tau_{N})\sigma_{\zeta,\chi}^{*}\sigma_{\zeta,\chi}\right]}=\frac{1}{2^{k}}\left(\frac{N}{\pi t\sqrt{1-\tau^{2}_{N}}}\right)^{2k^{2}}\\&\int_{\mbf{M}^{sa}_{k}}\int_{\mbf{M}^{sa}_{k}}\int_{\mbf{M}_{k}}\exp\left\{-\frac{N}{2t(1-\tau_{N})}\tr(Q_{11}^{2}+Q_{22}^{2})-\frac{N}{t(1+\tau_{N})}\tr|Q_{12}|^{2}\right.\\&\left.+i\tr(Q_{11}\sigma_{\zeta}+Q_{22}\sigma_{\chi}+Q_{12}\sigma_{\zeta,\chi}^{*}+Q_{12}^{*}\sigma_{\zeta,\chi})\right\}dQ_{12}dQ_{11}dQ_{22}.
\end{align*}
Collecting the anti-commuting variables into a length $2k(N-k)$ vector $\bs\xi=(\zeta_{1},...,\zeta_{k},\chi_{1},...,\chi_{k})^{T}$, the exponent becomes
\begin{align*}
    \bs\xi^{*}\begin{pmatrix} 1_{k}\otimes A^{(k)}-(\mbf{z}+iQ_{11})\otimes 1_{N-k}&-iQ_{12}\otimes 1_{N-k}\\-iQ_{12}^{*}\otimes1_{N-k}&1_{k}\otimes A^{(k)*}-(\bar{\mbf{z}}+iQ_{22})\otimes1_{N-k}\end{pmatrix}\bs\xi.
\end{align*}
Integrating over $\bs\xi$ gives the determinant of the matrix above. Now we change variables $Q_{11}\mapsto Q_{11}-i(u_{E,t}-\mbf{z})$ and $Q_{22}\mapsto Q_{22}-i(u_{E,t}-\bar{\mbf{z}})$ and shift the diagonal elements $Q_{11}$ and $Q_{22}$ back to the real axis using the analyticity of the integrand and the Gaussian decay at infinity. We can then write the integral in terms of the $2k\times 2k$ Hermitian matrix $Q=\begin{pmatrix}Q_{11}&Q_{12}\\Q_{12}^{*}&Q_{22}\end{pmatrix}$. We obtain \eqref{eq:Fsuper} after replacing $\mbf{z}$ with $\mbf{w}$ and rescaling by the constant factor in \eqref{eq:PsiTilde} that defines $\wt{\Psi}$ in terms of $\Psi$.
\end{proof}

\begin{proof}[Proof of \Cref{lem:PsiBound}]
Let
\begin{align}
    \wt{\Psi}_{upper}&=\prod_{j=1}^{k}\det\left(\eta_{E,t}^{2}+|\ac{A}^{(j-1)}-u_{E,t}|^{2}\right)e^{\frac{N}{t(1-\tau_{N})}\Re(u_{E,t}-w_{j})^{2}-\frac{N}{t}\eta_{E,t}^{2}}.\label{eq:PsiUpper}
\end{align}
We need to show that
\begin{align}
    \wt{\Psi}(\mbf{z};A^{(k)})&\leq\frac{e^{C(Nt\delta+(Nt\delta)^{-1})}}{(Nt\delta^{2})^{k^{2}}}\wt{\Psi}_{upper}.\label{eq:PsiBound1}
\end{align}
Partition $Q$ into $k\times k$ blocks $Q_{11},Q_{22}$ and $Q_{12}$ with elements $b_{jl},d_{jl}$ and $c_{jl}$ respectively. We rewrite the determinant in \eqref{eq:Dk} by exchanging row/column $j$ with $j+N-k$ to obtain
\begin{align*}
    D^{(k)}(Q)&=\det\mbf{M}^{(k)}(Q),
\end{align*}
where $\mbf{M}^{(k)}$ is the $k\times k$ block matrix with diagonal blocks
\begin{align*}
    \mbf{M}^{(k)}_{jj}&=\begin{pmatrix}-ic_{jj}&\ac{A}^{(k)}-u_{E,t}-ib_{jj}\\\ac{A}^{(k)*}-u_{E,t}-id_{jj}&-i\bar{c}_{jj}\end{pmatrix},
\end{align*}
and off-diagonal blocks
\begin{align*}
    \mbf{M}^{(k)}_{jl}&=-i\begin{pmatrix}c_{jl}&b_{jl}\\d_{jl}&\bar{c}_{lj}\end{pmatrix}.
\end{align*}

Using Fischer's inequality we can bound the determinant as follows:
\begin{align*}
    \left|\det\mbf{M}^{(k)}\right|^{2}&\leq\prod_{j=1}^{k}\det\left[(\mbf{M}^{(k)}\mbf{M}^{(k)*})_{jj}\right]\\
    &\leq\prod_{j=1}^{k}\det\left(\theta_{j}^{2}+|\ac{A}^{(k)}-u_{E,t}-ib_{jj}|^{2}\right)\det\left(\zeta^{2}_{j}+|\ac{A}^{(k)}-u_{E,t}+id_{jj}|^{2}\right),
\end{align*}
where we have defined
\begin{align}
    \theta_{j}&=\left|c_{jj}\right|^{2}+\sum_{l\neq j}\left(\left|c_{jl}\right|^{2}+\left|b_{jl}\right|^{2}\right),\\
    \zeta_{j}&=\left|c_{jj}\right|^{2}+\sum_{l\neq j}\left(\left|c_{lj}\right|^{2}+\left|d_{jl}\right|^{2}\right).
\end{align}
Consider one factor in the product; using $\det(1+M)\leq\exp\tr M$ for $M>-1$ we have
\begin{align*}
    \frac{\det\left(\theta^{2}_{j}+|\ac{A}^{(k)}-u_{E,t}-ib_{jj}|^{2}\right)}{\det(\eta^{2}+\left|\ac{A}^{(k)}-u_{E,t}\right|^{2})}&=\det\left[1-\begin{pmatrix}i(\theta_{j}-\eta_{j})&ib_{jj}\\-ib_{jj}&i(\theta_{j}-\eta_{j})\end{pmatrix}\mc{F}_{u_{E,t}}^{(k)}(\eta)\right]\\
    &\leq\exp\left\{b_{jj}\Re\tr\mc{F}^{(k)}_{u_{E,t}}(\eta)(\sigma_{Y}\otimes1_{N-k})+\frac{\theta^{2}_{j}-\eta^{2}+b_{jj}^{2}}{2\eta}\Im\tr\mc{F}^{(k)}_{u_{E,t}}(\eta)\right\}.
\end{align*}
For the first term we use \cref{lem:minorresolvent} to replace $A^{(k)}$ with $A$ and then estimate the trace by \cref{eq:traceEstimate}:
\begin{align*}
    \Re\tr\mc{F}^{(k)}_{u_{E,t}}(\eta)(\sigma_{Y}\otimes 1_{N-k})&=\Re\tr\mc{F}_{u_{E,t}}(\eta)(\sigma_{Y}\otimes1_{N})+O(\eta^{-1})\\
    &=O(\eta^{-1}).
\end{align*}

Up to now $\eta$ has been arbitrary. Let $0<\delta<1$ and set $\eta=\sqrt{1+\delta}\eta_{E,t}$, $\lambda=u_{E,t}+i\eta$. Then since $\frac{t}{N}\tr\left|G\right|^{2}=1$ by definition (recall that $G=G_{\lambda_{E,t}}$) we have
\begin{align*}
    1-\frac{t}{2N\eta}\Im\tr\mc{G}_{u_{E,t}}(\eta)&=\frac{\delta\eta_{E,t}^{2}t}{N}\tr\big(1+\delta\eta_{E,t}^{2}\left|G\right|^{2}\big)^{-1}\left|G\right|^{4}\\
    &\geq\frac{\delta t}{N(1+\delta)}\tr(\Im G)^{2}\\
    &=\frac{\delta}{2(1+\delta)}+O(\delta t).
\end{align*}
By interlacing and \cref{eq:traceEstimate} we then have
\begin{align*}
    \frac{t}{2N\eta}\Im\tr\mc{F}^{(k)}_{u_{E,t}}(\eta)&=\frac{t}{2N\eta}\Im\tr\mc{F}_{u_{E,t}}(\eta)+O\left(\frac{t}{N\eta^{2}}\right)\\
    &=\frac{t}{2N\eta}\Im\tr\mc{G}_{u_{E,t}}(\eta)+O\left(\frac{t}{N\eta^{2}}\right)\\
    &\leq\frac{1-\delta/8}{Nt},
\end{align*}
for sufficiently large $N$. We obtain the bound
\begin{align*}
    \frac{\det(\theta^{2}_{j}+\left|\ac{A}^{(k)}-u_{E,t}-ib_{jj}\right|^{2})}{\det(\eta^{2}+\left|\ac{A}^{(k)}-u_{E,t}\right|^{2})}&\leq C\exp\left\{-\eta\tr|G_{\lambda}|^{2}+\frac{N(1-\delta/8)}{t}(\theta^{2}_{j}+b_{jj}^{2})+\frac{C}{t}\left|b_{jj}\right|\right\}.
\end{align*}
Repeating this for the other factors we obtain the following bound on the determinant:
\begin{align*}
    \left|\det\mbf{M}^{(k)}\right|&\leq \det^{k}\left(\eta^{2}+|\ac{A}^{(k)}-u_{E,t}|^{2}\right)e^{-k\eta^{2}\tr\left|G_{\lambda}\right|^{2}}\\
    &\times\exp\left\{\frac{N(1-\delta/8)}{2t}\tr Q^{2}+\frac{C}{t}\sum_{j=1}^{k}(\left|b_{jj}\right|+\left|d_{jj}\right|)\right\}.
\end{align*}
Since $\tau_{N}=O(N^{-1})$ and $\left\|Z\right\|\leq C$, we have
\begin{align*}
    \log|g(Q)|&=\frac{\tr Q\Im Z}{\eta_{E,t}(1-\tau_{N})}-\frac{N\tau^{2}_{N}}{2t(1-\tau^{2}_{N})}\tr Q^{2}-\frac{N\tau_{N}}{2t(1-\tau^{2}_{N})}\tr(LQ)^{2}\\
    &\leq\frac{C}{t}\sum_{j=1}^{k}\left(|b_{jj}|+|d_{jj}|\right)+\frac{C}{t}\tr Q^{2}.
\end{align*}
Inserting this into the expression for $\wt{\Psi}$ we find
\begin{align*}
    \wt{\Psi}(\mbf{z};A^{(k)})&\leq C\det^{k}\left(\eta^{2}+\left|\ac{A}^{(k)}-u_{E,t}\right|^{2}\right)e^{\frac{Nk}{t(1-\tau_{N})}(u_{E,t}-E)^{2}-k\eta^{2}\tr\left|G_{\lambda}\right|^{2}}\\&\times d_{N,k}\int_{\mbf{M}^{sa}_{2k}}\exp\left\{-\frac{CN\delta}{t}\tr Q^{2}+\frac{C}{t}\sum_{j=1}^{2k}(\left|b_{jj}\right|+\left|d_{jj}\right|)\right\}dQ.
\end{align*}
The second line is bounded by
\begin{align*}
    d_{N,k}\left(\frac{t}{N\delta}\right)^{2k^{2}}e^{C(Nt\delta)^{-1}}&\leq\frac{Ct^{k}}{N^{k^{2}}\delta^{2k^{2}}}e^{C(Nt\delta)^{-1}}.
\end{align*}
In terms of $\wt{\Psi}_{upper}$ we have
\begin{align*}
    \wt{\Psi}(\mbf{z};A^{(k)})&\leq\frac{Ct^{k}e^{C(Nt\delta)^{-1}}}{N^{k^{2}}\delta^{2k^{2}}}\left(\prod_{j=1}^{k}r_{j}\right)\wt{\Psi}_{upper},
\end{align*}
where
\begin{align*}
    r_{j}&:=\frac{\det\left(\eta^{2}+|\ac{A}^{(k)}-u_{E,t}|^{2}\right)}{\det\left(\eta_{E,t}^{2}+|\ac{A}^{(j-1)}-u_{E,t}|^{2}\right)}e^{-\eta^{2}\tr|G_{\lambda}|^{2}+\frac{N}{t}\eta_{E,t}^{2}}.
\end{align*}
We have used the fact that $|\Delta(\mbf{z})|^{2}<C$ on compact sets. Using Cramer's rule we replace $A^{(k)}$ by $A^{(j-1)}$ in the numerator of $r_{j}$:
\begin{align*}
    \frac{\det\left(\eta^{2}+|\ac{A}^{(k)}-u_{E,t}|^{2}\right)}{\det\left(\eta_{E,t}^{2}+|\ac{A}^{(j-1)}-u_{E,t}|^{2}\right)}&=\det\left(1+\delta\eta_{E,t}^{2}H^{(j-1)}\right)\prod_{l=j}^{k}|\det V_{j}^{*}\mc{F}^{(j-1)}_{u_{E,t}}(\eta) V_{j}|.
\end{align*}
Since $\|\mc{F}^{(j-1)}_{u_{E,t}}(\eta)\|\leq C/\eta\leq C/t$ and $V_{j}$ has rank 2 we can bound each factor in the product over $l$ by its norm to find
\begin{align*}
    r_{j}&\leq\frac{C}{t^{2(k-j+1)}}\exp\left\{\delta\eta_{E,t}^{2}\tr H^{(j-1)}-\eta^{2}\tr|G_{\lambda}|^{2}+\frac{N}{t}\eta_{E,t}^{2}\right\},
\end{align*}
where we have used $\det(1+X)\leq e^{\tr X}$ for $X>0$. By interlacing and \eqref{eq:traceEstimate} we find
\begin{align*}
    \tr H^{(j-1)}&=\frac{1}{2\eta_{E,t}}\Im\tr\mc{F}^{(j-1)}\\
    &=\frac{\Im\tr G}{\eta_{E,t}}+O\left(\frac{1}{t^{2}}\right)\\
    &=\frac{N}{t}+O\left(\frac{1}{t^{2}}\right).
\end{align*}
Thus we have
\begin{align*}
    r_{j}&\leq\frac{C}{t^{2(k-j+1)}}\exp\left\{\frac{N\eta^{2}}{t}\left(1-t\langle|G_{\lambda}|^{2}\rangle\right)+O(\delta)\right\}\\
    &=\frac{C}{t^{2(k-j+1)}}\exp\left\{\delta\eta^{2}\eta_{E,t}^{2}\tr\left(1+\delta\eta_{E,t}^{2}|G|^{2}\right)^{-1}|G|^{4}+O(\delta)\right\}\\
    &\leq\frac{C e^{CNt\delta}}{t^{2(k-j+1)}}.
\end{align*}
Taking the product over $j=1,...,k$ we obtain \eqref{eq:PsiBound1}.
\end{proof}

\begin{proof}[Proof of \Cref{lem:PsiAsymp}]
We need to prove that
\begin{align*}
    \wt{\Psi}(\mbf{z};A^{(k)})&=\left[1+O\left(\frac{\log N}{(Nt)^{\frac{1}{2}\left(1-\frac{1}{n_{\epsilon}}\right)}}\right)\right]\wt{\Psi}_{asymp}(\mbf{z};A^{(k)}),
\end{align*}
where
\begin{align}
    \wt{\Psi}_{asymp}(\mbf{z};A^{(k)})&=\det\left[\frac{1}{(2\pi)^{3/2}}\int_{-1}^{1}e^{-2\tau_{E,t}\lambda^{2}-i(z_{j}-\bar{z}_{l})\lambda}d\lambda\right]\nonumber\\&\times\prod_{j=1}^{k}\left|\frac{2\eta_{E,t}\mbf{v}_{j}^{*}G^{(j-1)}\mbf{v}_{j}}{1-t\langle G^{2}\rangle}\right|^{2j}\psi_{j}(A^{(j-1)}),\label{eq:PsiAsymp1}
\end{align}
and $\psi_{j}$ was defined in \eqref{eq:psi_j}.

Let $Q=U\bs\eta U^{*}$ be the eigenvalue decomposition of $Q$. Without loss we assume that $\eta_{j}\geq0$ for $1\leq j\leq k+s$ and $\eta_{j}<0$ for $k+s<j\leq 2k$, i.e. $s$ is the signature of $Q$. Let $\bs\eta_{s}$ be the diagonal matrix whose first $s$ elements are $\eta_{E,t}$ and remaining elements are $-\eta_{E,t}$, and let $\bs\lambda_{s}=u_{E,t}+i\bs\eta_{s}$. Let $R=\bs\eta-\bs\eta_{s}$ and $\mbf{R}^{(j)}=(\bs\eta-\bs\eta_{s})\otimes1_{N-j}$. Let $\mbf{F}^{(j)}_{s}=\mbf{F}^{(j)}_{\bs\lambda_{s}}(U^{*}LU)$ and $\mbf{G}^{(j)}_{s}=\mbf{G}^{(j)}_{\bs\lambda_{s}}$, where $\mbf{G}$ and $\mbf{F}$ are the block resolvents defined in \eqref{eq:mbG} and \eqref{eq:mbF} respectively. Adding and subtracting $\bs\eta_{s}\otimes1_{N-k}$ inside the determinant $D^{(k)}(Q)$ in \eqref{eq:Dk} we find
\begin{align*}
    D^{(k)}(Q)&=\det\left(1_{2k}\otimes W^{(k)}_{1}+i\sqrt{\tau_{N}}U^{*}LU\otimes \ac{W}^{(k)}_{2}-\bs\lambda_{s}\otimes 1_{N-k}\right)\det\left(1-i\mbf{R}^{(k)}\mbf{F}_{s}^{(k)}\right)\\
    &=:D^{(k)}_{1}D^{(k)}_{2}.
\end{align*}

Consider first the second factor $D^{(k)}_{2}$. Taking the square modulus and using the inequality $\log(1+x)\leq x-\frac{3x^{2}}{6+4x}$ for $x>-1$ we find
\begin{align*}
    |D^{(k)}_{2}|^{2}&\leq\exp\left\{\tr M^{(k)}-\frac{3\tr (M^{(k)})^{2}}{6+4\|M^{(k)}\|}\right\},
\end{align*}
where
\begin{align*}
    M^{(k)}&=2\Im(\mbf{R}^{(k)}\mbf{F}_{s}^{(k)})+\mbf{R}^{(k)}\mbf{F}_{s}^{(k)}\mbf{F}_{s}^{(k)*}\mbf{R}^{(k)}.
\end{align*}

To estimate the norm and traces of $M^{(k)}$ we will use \Cref{lem:minorresolvent} to replace $F^{(j)}$ by $F^{(j-1)}$ and \Cref{lem:minornorms} to estimate the resulting errors, recalling that we have assumed $\mbf{v}_{l}\in\mc{E}_{l}$ for $l=1,...,k$. For the norm we find
\begin{align*}
    \|M^{(k)}\|&\leq2\|\bs\eta-\bs\eta_{s}\|\cdot\|\mbf{F}^{(k)}_{s}\|+\|\bs\eta-\bs\eta_{s}\|^{2}\cdot\|\mbf{F}^{(k)}_{s}\|^{2}\\
    &\leq\frac{C\|\bs\eta-\bs\eta_{s}\|}{\eta_{E,t}}+\frac{C\|\bs\eta-\bs\eta_{s}\|^{2}}{\eta_{E,t}^{2}},
\end{align*}
where we have used the bound for $\mbf{F}^{(k)}_{s}$ in \eqref{eq:mbfFminorNorm}. Since the signs of the elements of $\bs\eta$ are the same as those of $\bs\eta_{s}$, we have $|\bs\eta+\bs\eta_{s}|\geq|\bs\eta-\bs\eta_{s}|$ and hence
\begin{align*}
    \|M^{(k)}\|&\leq C\max_{1\leq j\leq 2k}\gamma_{j},
\end{align*}
where $\gamma_{j}=|\eta_{j}^{2}-\eta_{E,t}^{2}|/\eta_{E,t}^{2}$.

Consider
\begin{align*}
    x^{(j)}&=\tr\mbf{R}^{(j)}\mbf{F}^{(j)}\mbf{F}^{(j)*}\mbf{R}^{(j)}.
\end{align*}
Recall that by \Cref{lem:minorresolvent}, there is a unitary $\mbf{U}_{j}$ such that
\begin{align}
    \mbf{U}_{j}\begin{pmatrix}0&0\\0&\mbf{F}_{s}^{(j)}\end{pmatrix}\mbf{U}_{j}^{*}&=\mbf{F}_{s}^{(j-1)}-\mbf{F}_{s}^{(j-1)}\mbf{V}_{j}(\mbf{V}_{j}^{*}\mbf{F}_{s}^{(j-1)}\mbf{V}_{j})^{-1}\mbf{V}_{j}^{*}\mbf{F}_{s}^{(j-1)}.\label{eq:FPert}
\end{align}
Using \eqref{eq:FPert}, \eqref{eq:minorNorm2} and \eqref{eq:mbfFminorNorm} we find
\begin{align*}
    |x^{(j)}-x^{(j-1)}|&\leq\left(2+\|(\mbf{V}_{j}^{*}\mbf{F}_{s}^{(j-1)}\mbf{V}_{j})^{-1}\|\cdot\|\mbf{F}_{s}^{(j-1)}\|\right)\|(\mbf{V}_{j}^{*}\mbf{F}_{s}^{(j-1)}\mbf{V}_{j})^{-1}\|\cdot\|\mbf{F}_{s}^{(j-1)}\|\\&\cdot\|\mbf{F}_{s}^{(j-1)*}(\mbf{R}^{(j-1)})^{2}\mbf{F}_{s}^{(j-1)}\|\\
    &\leq\frac{C}{\eta_{E,t}^{2}}\|\mbf{R}^{(j-1)}\|^{2}\\
    &\leq C\sum_{l=1}^{2k}\gamma_{l}.
\end{align*}
To estimate $x$ we apply \eqref{eq:traceEstimate2}:
\begin{align*}
    x&=\tr\mbf{R}|\mbf{F}_{\mbf{z}}(U^{*}LU)|^{2}\mbf{R}\\
    &=\sum_{j,l=1}^{2k}|R_{jl}|^{2}\tr GG^{*}+O\left(\frac{\|R\|^{2}}{\eta_{E,t}^{2}}\right)\\
    &=\frac{N}{t}\tr R^{2}+O\left(\sum_{j=1}^{2k}\gamma_{j}\right).
\end{align*}

We carry out the same steps to estimate the other traces that appear in $\tr M^{(k)}$ and $\tr(M^{(k)})^{2}$. In general, the error after replacing $\mbf{F}_{s}^{(k)}$ with $\mbf{G}_{s}$ will be $O(\|R\|^{m}/\eta_{E,t}^{m})$ with $m=1,...,4$. This can be bounded in terms of $\gamma_{j}$ using the inequalities
\begin{align*}
    \left(\frac{|\eta_{j}\pm\eta_{E,t}|}{\eta_{E,t}}\right)^{m}&\leq\gamma^{\lceil\frac{m}{2}\rceil}_{j},\quad m=1,...,4\\
    \frac{|\eta_{j}\pm\eta_{E,t}|^{2}}{\eta_{E,t}^{2}}&\leq\gamma^{2}_{j},
\end{align*}
where the $\pm$ sign is taken opposite to the sign of $\eta_{j}$. Therefore, if we let $M_{0}$ denote the matrix obtained from $M$ by replacing $\mbf{F}_{s}$ with $\mbf{G}_{s}$, we have
\begin{align*}
    |\tr M^{(k)}-\tr M_{0}|&\leq C\sum_{j=1}^{2k}\gamma_{j},\\
    |\tr(M^{(k)})^{2}-\tr M^{2}_{0}|&\leq C\sum_{j=1}^{2k}\gamma_{j}^{2},
\end{align*}
We can calculate traces of $M_{0}$ using the definition of $\lambda_{E,t}$. We have
\begin{align*}
    \tr M_{0}&=\tr(Q^{2}-\eta_{E,t}^{2})\tr|G|^{2}=\frac{N}{t}\sum_{j=1}^{2k}(\eta_{j}^{2}-\eta_{E,t}^{2}),
\end{align*}
and
\begin{align*}
    \tr M_{0}^{2}&=\tr(Q^{2}-\eta_{E,t}^{2})^{2}\tr|G|^{4}\geq CNt\sum_{j=1}^{2k}\gamma_{j}^{2}.
\end{align*}
Thus we have obtained the bound
\begin{align*}
    |D^{(k)}_{2}|&\leq C\exp\left\{\frac{N}{2t}\sum_{j=1}^{2k}(\eta_{j}^{2}-\eta_{E,t}^{2})-CNt\sum_{j=1}^{2k}\frac{\gamma_{j}^{2}}{1+\gamma_{j}}\right\}.
\end{align*}
We have used the fact that $x\mapsto\frac{x^{2}}{1+x}$ is increasing on $[0,\infty]$, so that
\begin{align*}
    \frac{1}{1+\max_{j}\gamma_{j}}\sum_{j=1}^{2k}\gamma_{j}^{2}\geq\frac{\max_{j}\gamma_{j}^{2}}{1+\max_{j}\gamma_{j}}=\max_{j}\frac{\gamma^{2}_{j}}{1+\gamma_{j}}\geq\frac{1}{2k}\sum_{j=1}^{2k}\frac{\gamma_{j}^{2}}{1+\gamma_{j}},
\end{align*}
and the fact that
\begin{align*}
    \frac{Nt\gamma^{2}}{1+\gamma}-\gamma\geq C\left(1+\frac{Nt\gamma^{2}}{1+\gamma}\right).
\end{align*}

Now we bound $D^{(j)}_{1}$, whose definition we repeat here:
\begin{align*}
    D^{(j)}_{1}&=\det\left(1_{2k}\otimes W^{(j)}_{1}+i\sqrt{\tau_{N}}U^{*}LU\otimes\ac{W}^{(j)}_{2}-\bs\lambda_{s}\otimes1_{N-j}\right).
\end{align*}
We use Cramer's rule to replace $A^{(j)}$ with $A^{(j-1)}$:
\begin{align*}
    \left|\frac{D^{(j)}_{1}}{D^{(j-1)}_{1}}\right|&=|\det\mbf{V}_{j}^{*}\mbf{F}_{s}^{(j-1)}\mbf{V}_{j}|.
\end{align*}
Using the resolvent identity we find
\begin{align*}
    \frac{\det\mbf{V}_{j}^{*}\mbf{F}^{(j-1)}_{s}\mbf{V}_{j}}{\det\mbf{V}_{j}^{*}\mbf{G}^{(j-1)}_{s}\mbf{V}_{j}}&=\det\left[1+i\sqrt{\tau_{N}}(\mbf{V}_{j}^{*}\mbf{G}^{(j-1)}_{s}\mbf{V}_{j})^{-1}\mbf{V}_{j}^{*}\mbf{G}^{(j-1)}_{s}(U^{*}LU\otimes W^{(j-1)}_{2})\mbf{F}^{(j-1)}_{s}\mbf{V}_{j}\right].
\end{align*}
Bounding the norm of the matrix in the determinant on the right hand side using \eqref{eq:VmbfGVnorm} and \eqref{eq:mbfGWmbfFminorNorm} we find
\begin{align}
    \frac{\det\mbf{V}_{j}^{*}\mbf{F}^{(j-1)}_{s}\mbf{V}_{j}}{\det\mbf{V}_{j}^{*}\mbf{G}^{(j-1)}_{s}\mbf{V}_{j}}&=1+O\left(\frac{1}{(Nt)^{\frac{1}{2}\left(1-\frac{1}{n_{\epsilon}}\right)}}\right).\label{eq:PertRatio}
\end{align}
From this we obtain by induction 
\begin{align*}
    |D^{(k)}_{1}|&\leq CD^{(0)}_{1}\prod_{j=1}^{k}\left|\mbf{V}_{j}^{*}\mbf{G}^{(j-1)}_{s}\mbf{V}_{j}\right|\\
    &= CD^{(0)}_{1}\prod_{j=1}^{k}|\mbf{v}_{j}^{*}G^{(j-1)}\mbf{v}_{j}|^{2k}.
\end{align*}
We factorise $D^{(0)}_{1}$ and use \eqref{eq:det1}:
\begin{align*}
    \left|\frac{D^{(0)}_{1}}{\det^{2k}(W_{1}-w_{E})}\right|&=\left|\det(1+i\sqrt{\tau_{N}}\mbf{G}_{s}(U^{*}LU\otimes \ac{W}_{2}))\right|\leq C.
\end{align*}
Combining the bounds for $D^{(k)}_{1}$ and $D^{(k)}_{2}$ we have
\begin{align*}
    \frac{|D^{(k)}(Q)|}{|\det(W_{1}-w_{E})|^{2k}}&\leq C\exp\left\{\frac{N}{2t}\sum_{j=1}^{2k}(\eta_{j}^{2}-\eta_{E,t}^{2})-CNt\sum_{j=1}^{2k}\frac{\gamma_{j}^{2}}{1+\gamma_{j}}\right\}\prod_{j=1}^{k}|\mbf{v}_{j}^{*}G^{(j-1)}\mbf{v}_{j}|^{2k}.
\end{align*}

Since $|z_{j}|<C$ and $\|\bs\eta_{s}\|<Ct$ we have
\begin{align*}
    |g(Q)|&\leq C\exp\left\{\frac{\|Z\|}{t(1-\tau_{N})}\tr|\bs\eta-\bs\eta_{s}|+\frac{N\tau_{N}}{2t(1-\tau_{N})}\tr|\bs\eta^{2}-\bs\eta_{s}^{2}|\right\}\\
    &\leq C\exp\left\{C\sum_{j=1}^{k}\gamma_{j}\right\}.
\end{align*}
Inserting these bounds into \eqref{eq:Fsuper} we find
\begin{align}
    \frac{\wt{\Psi}(\mbf{z};A^{(k)})}{\wt{\Psi}_{asymp}(\mbf{z};A^{(k)})}&\leq Cd_{N,k}r(\mbf{z})\int_{\mbf{M}^{sa}_{2k}}\exp\left\{-\frac{CNt\gamma_{j}^{2}}{1+\gamma_{j}}\right\}dQ,\label{eq:PsiBound2}
\end{align}
where
\begin{align*}
    r(\mbf{z})&=|\Delta(\mbf{z})|^{2}\det^{-1}\left[\frac{1}{(2\pi)^{3/2}}\int_{-1}^{1}e^{-2\tau_{E,t}\lambda^{2}-i(z_{j}-\bar{z}_{l})\lambda}d\lambda\right].
\end{align*}
Since
\begin{align*}
    r(\mbf{z})&=k!(2\pi)^{3k/2}\left(\int_{-1}^{1}\left|\frac{\det(e^{-\tau_{E,t}\lambda_{l}^{2}-iz_{j}\lambda_{l}})}{\Delta(\mbf{z})}\right|^{2}d\bs\lambda\right)^{-1}
\end{align*}
and the integrand is positive, we have $r(\mbf{z})\leq C$. If $||\eta_{j}|-\eta_{E,t}|>\sqrt{\frac{t}{N}}\log N$, then $\gamma^{2}_{j}>\frac{C\log^{2}N}{Nt}$ and hence the integral in \eqref{eq:PsiBound2} is $O(e^{-\log^{2}N})$. This proves that we can restrict the integral to the region 
\begin{align*}
    \bigcap_{j=1}^{2k}\left\{||\eta_{j}|-\eta_{E,t}|<\sqrt{\frac{t}{N}}\log N\right\}.
\end{align*}

We can split this region into disjoint subsets according to signature of $Q$. For each signature of $Q$ we can choose a reference subset $\Omega_{s}$ in which the first $k+s$ eigenvalues of $Q$ are positive. Since the integrand is symmetric in the eigenvalues, all matrices with a given signature $s$ can be transformed into an element of $\Omega_{s}$. Therefore, defining $\wt{\Psi}_{s}$ to be the restriction of the integral to $\Omega_{s}$, we have
\begin{align}
    \wt{\Psi}&=\sum_{s=-k}^{k}\begin{pmatrix}2k\\k+s\end{pmatrix}\wt{\Psi}_{s}+O\left(e^{-C\log^{2}N}\right).
\end{align}

Let $L_{s}$ be the diagonal matrix whose first $k+s$ and last $k-s$ elements are $+1$ and $-1$ respectively, so that $L_{0}=L$. In $\Omega_{s}$ we make the change of variables
\begin{align*}
    Q&=U\left(\eta_{E,t}L_{s}+\sqrt{\frac{t}{N}}\bs\eta\right)U^{*},
\end{align*}
where $\bs\eta\in[-\log N,\log N]^{2k}$ and $U\in U(2k)/U(1)^{2k}$. We lift the integral over the coset $U(2k)/U(1)^{2k}$ to the full group $U(2k)$ with normalised Haar measure $\mu_{2k}$. The Lebesgue measure $dQ$ transforms as
\begin{align*}
    dQ&=\frac{v_{2k}}{(2\pi)^{2k}(2k)!}\left(\frac{4\eta^{2}_{E,t}t}{N}\right)^{k^{2}}J_{s}(\bs\eta)d\bs\eta d\mu_{2k}(U),
\end{align*}
where $v_{2k}=\text{Vol}(U(2k))=2^{k}\pi^{k(k+1)/2}(\prod_{j=1}^{k-1}j!)^{-1}$ and
\begin{align*}
    J_{s}(\bs\eta)&=\left(\frac{t}{N}\right)^{s^{2}}\Delta^{2}(\eta_{1},...,\eta_{k+s})\Delta^{2}(\eta_{k+s+1},...,\eta_{2k})\prod_{j=1}^{k+s}\prod_{l=1}^{k-s}\left(1+\sqrt{\frac{t}{N}}\frac{\eta_{j}-\eta_{l}}{\eta_{E,t}}\right).
\end{align*}

Since $|\eta_{j}|>Ct$ in $\Omega_{s}$, we can apply Cramer's rule directly to $D^{(k)}(Q)$ without factorising it first as we did before. Defining $\bs\lambda=u_{E,t}+i\left(\eta_{E,t}L_{s}+\sqrt{\frac{t}{N}}\bs\eta\right)$ and $\mbf{F}^{(j)}_{\bs\lambda}=\mbf{F}^{(j)}_{\bs\lambda}(U^{*}LU)$ we have
\begin{align*}
    \frac{D^{(j)}(Q)}{D^{(j-1)}(Q)}&=\det\mbf{V}_{j}^{*}\mbf{F}^{(j-1)}_{\bs\lambda}\mbf{V}_{j}.
\end{align*}
From the resolvent identity we have
\begin{align*}
    \frac{\det\mbf{V}_{j}^{*}\mbf{F}^{(j-1)}_{\bs\lambda}\mbf{V}_{j}}{\det\mbf{V}_{j}^{*}\mbf{F}^{(j-1)}_{s}\mbf{V}_{j}}&=\det\left[1+\sqrt{\frac{t}{N}}(\mbf{V}_{j}^{*}\mbf{F}^{(j-1)}_{s}\mbf{V}_{j})^{-1}\mbf{V}_{j}^{*}\mbf{F}^{(j-1)}_{\bs\lambda}(\bs\eta\otimes1)\mbf{F}^{(j-1)}_{s}\mbf{V}_{j}\right].
\end{align*}
To bound the norm of $\mbf{F}^{(k-1)}_{\bs\lambda}$ we use the resolvent identity again:
\begin{align*}
    \|\mbf{F}^{(j-1)}_{\bs\lambda}\|&\leq\|\mbf{F}^{(j-1)}_{s}\|+\sqrt{\frac{t}{N}}\|\bs\eta\|\cdot\|\mbf{F}^{(j-1)}_{s}\|\cdot\|\mbf{F}^{(j-1)}_{\bs\lambda}\|\\
    &\leq\frac{1}{1-\frac{C\log N}{\sqrt{Nt}}}\|\mbf{F}^{(j-1)}_{s}\|\\
    &\leq\frac{C}{\eta_{E,t}},
\end{align*}
where the last inequality is \eqref{eq:mbfFminorNorm}. Therefore
\begin{align*}
    \frac{\det\mbf{V}_{j}^{*}\mbf{F}^{(j-1)}_{\bs\lambda}\mbf{V}_{j}}{\det\mbf{V}_{j}^{*}\mbf{F}^{(j-1)}_{s}\mbf{V}_{j}}&=1+O\left(\frac{\log N}{\sqrt{Nt}}\right).
\end{align*}
Replacing $\mbf{F}^{(j-1)}_{s}$ with $\mbf{G}^{(j-1)}_{s}$ by \eqref{eq:PertRatio} and repeating these steps $k$ times we find
\begin{align*}
    \frac{D^{(k)}(Q)}{D(Q)}&=\left[1+O\left(\frac{1}{(Nt)^{\frac{1}{2}\left(1-\frac{1}{n_{\epsilon}}\right)}}\right)\right]\prod_{j=1}^{k}\det\mbf{V}_{j}^{*}\mbf{G}^{(j-1)}_{s}\mbf{V}_{j}\\
    &=\left[1+O\left(\frac{1}{(Nt)^{\frac{1}{2}\left(1-\frac{1}{n_{\epsilon}}\right)}}\right)\right]\prod_{j=1}^{k}(\mbf{v}_{j}^{*}G^{(j-1)}\mbf{v}_{j})^{k+s}(\mbf{v}_{j}^{*}G^{(j-1)*}\mbf{v}_{j})^{k-s}.
\end{align*}

Now we factorise $D(Q)$:
\begin{align}
    D(Q)&=\det\left(1+i\sqrt{\tau_{N}}\mbf{G}_{\bs\lambda}(U^{*}LU\otimes \ac{W}_{2})\right)\prod_{j=1}^{2k}\det(W_{1}-\lambda_{j}).\label{eq:Dfactorised}
\end{align}
The first factor can be directly estimated using \eqref{eq:det1}:
\begin{align*}
    \det\left(1+i\sqrt{\tau_{N}}\mbf{G}_{\bs\lambda}(U^{*}LU\otimes\ac{W}_{2})\right)&=\left[1+O\left(\frac{N^{\epsilon/2}}{\sqrt{Nt}}\right)\right]\exp\left\{\frac{\tau_{N}}{2}\tr(\mbf{G}_{\bs\lambda}(U^{*}LU\otimes\ac{W}_{2}))^{2}\right\}.
\end{align*}
Using the resolvent identity we can replace $\lambda_{j}$ with $\lambda_{E,t}$ if $j\leq k+s$ and $-\lambda_{E,t}$ if $j>k+s$:
\begin{align*}
    \mbf{G}_{\bs\lambda}&=\mbf{G}_{s}+i\mbf{G}_{s}((\bs\eta-\eta_{E,t}L_{s})\otimes1)\mbf{G}_{\bs\lambda}.
\end{align*}
The resulting terms can be estimated by \eqref{eq:C3.1'} and \eqref{eq:C3.2'}. We find
\begin{align*}
    &\det\left(1+i\sqrt{\tau_{N}}\mbf{G}_{\bs\lambda}(U^{*}LU\otimes\ac{W}_{2})\right)\\&=\left[1+O\left(\frac{N^{\epsilon/2}}{\sqrt{Nt}}\right)\right]\exp\left\{k\alpha_{E,t}+isN\tau_{N}\langle\Re(G)\ac{W}_{2}\Im(G)\ac{W}_{2}\rangle-\frac{\beta_{E,t}}{2}\tr(UL_{s}U^{*}L)^{2}\right\}.
\end{align*}

Now consider one factor in the product over $j$ on the right hand side of \eqref{eq:Dfactorised}. Assume without loss that $\Im\lambda_{j}>0$ and divide by $\det(W_{1}-\lambda_{E,t})$ (otherwise divide by $\overline{\det}(W_{1}-\lambda_{E,t})$). Using the identity
\begin{align*}
    \log(1+x)&=x-\int_{0}^{1}x^{2}u(1+ux)^{-1}du,\quad x\in\mbb{C}\setminus[-\infty,-1],
\end{align*}
we obtain
\begin{align*}
    \frac{\det(W_{1}-\lambda_{j})}{\det(W_{1}-\lambda_{E,t})}&=\det\left(1-i\sqrt{\frac{t}{N}}\eta_{j}G\right)\\
    &=\exp\left\{-i\sqrt{\frac{t}{N}}\eta_{j}\tr G+\frac{t\eta_{j}^{2}}{2N}\tr G^{2}+\frac{t^{3/2}\eta_{j}^{3}}{N^{3/2}}\int_{0}^{1}u\Re\tr G^{3}\left(1-i\sqrt{\frac{t}{N}}u\eta_{j}G\right)^{-1}du\right\}.
\end{align*}
From the definition of $\lambda_{E,t}$ we have $t\langle G\rangle=\lambda_{E,t}-E$. Moreover, we have
\begin{align*}
    \frac{t^{3/2}|\eta_{j}|^{3}}{N^{3/2}}\left|\Re\tr G^{3}\left(1-i\sqrt{\frac{t}{N}}u\eta_{j}G\right)^{-1}\right|&\leq\frac{Ct^{3/2}|\eta_{j}|^{3}}{N^{3/2}\eta_{E,t}}\tr|G|^{2}\\
    &\leq\frac{C\log^{3}N}{\sqrt{Nt}}.
\end{align*}
Therefore
\begin{align*}
    &e^{-\frac{N}{2t}\left[\left(\eta_{E,t}+\sqrt{\frac{t}{N}}\eta_{j}\right)^{2}-\eta_{E,t}^{2}\right]}\frac{\det(W_{1}-\lambda_{j})}{\det(W_{1}-\lambda_{E,t})}\\&=\left[1+O\left(\frac{\log^{3}N}{\sqrt{Nt}}\right)\right]\exp\left\{-\frac{1}{2}\left(1-t\langle G^{2}\rangle\right)\eta_{j}^{2}-i\sqrt{\frac{N}{t}}(u_{E,t}-E)\eta_{j}\right\},
\end{align*}
for sufficiently large $N$.

For $g(Q)$ we find by similar arguments
\begin{align*}
    g(Q)&=\left[1+O\left(\frac{\log N}{\sqrt{Nt}}\right)\right]\exp\left\{-\frac{2isN\eta_{E,t}(E-u_{E,t})}{t(1-\tau_{N})}+i\sqrt{\frac{N}{t}}(u_{E,t}-E)\sum_{j=1}^{2k}\eta_{j}\right.\\&\left.-i\tr UL_{s}U^{*}Z-\frac{N\tau_{N}\eta_{E,t}^{2}}{2t}\tr(LUL_{s}U^{*})^{2}\right\}.
\end{align*}
Since
\begin{align*}
    \left|\frac{J_{s}(\bs\eta)}{J_{0}(\bs\eta)}\right|&\leq C\left(\frac{t}{N}\right)^{s^{2}}
\end{align*}
and the remaining $s$-dependent terms are $O(1)$ uniformly in $\bs\eta\in\Omega_{s}$, we conclude that the dominant contribution to $\wt{\Psi}$ comes from the term corresponding to $s=0$, i.e.
\begin{align*}
    \left|\frac{\wt{\Psi}_{s}}{\wt{\Psi}_{0}}\right|&\leq C\left(\frac{t}{N}\right)^{s^{2}}.
\end{align*}
For the leading term we have
\begin{align*}
    \wt{\Psi}_{0}&=\left[1+O\left(\frac{1}{(Nt)^{\frac{1}{4}}}\right)\right]f_{N,k}I(\mbf{z})\prod_{j=1}^{k}\psi_{j}(A^{(j-1)})|\mbf{v}_{j}^{*}G^{(j-1)}\mbf{v}_{j}|^{2j},
\end{align*}
where
\begin{align}
    f_{N,k}&=\frac{d_{N,k}v_{2k}}{(2\pi)^{2k}(k!)^{2}}\left(\frac{4\eta_{E,t}^{2}t}{N}\right)^{k^{2}}\left|\int_{\mbb{R}^{k}}e^{-\frac{1-t\langle G^{2}\rangle}{2}\sum_{j=1}^{k}\eta^{2}_{j}}\Delta^{2}(\bs\eta)d\bs\eta\right|^{2},\label{eq:f_Nk}
\end{align}
and
\begin{align}
    I(\mbf{z})&=|\Delta(\mbf{z})|^{2}e^{-k\tau_{E,t}}\int_{U(2k)}e^{\tr ULU^{*}Z-\frac{1}{2}\tau_{E,t}\tr(ULU^{*}L)^{2}}dU.\label{eq:Iz}
\end{align}

By taking the average over the unitary group, we can calculate the integral in \eqref{eq:f_Nk} as follows:
\begin{align*}
    \int_{\mbb{R}^{k}}e^{-\frac{1-t\langle G^{2}\rangle}{2}\sum_{j=1}^{k}\eta_{j}^{2}}\Delta^{2}(\bs\eta)d\bs\eta&=\int_{U(2k)}\int_{\mbb{R}^{k}}e^{-\frac{1-t\langle G^{2}\rangle}{2}\tr(U\bs\eta U^{*})^{2}}\Delta^{2}(\bs\eta)d\bs\eta d\mu_{2k}(U)\\
    &=\frac{k!(2\pi)^{k}}{v_{k}}\int_{\mbf{M}^{sa}_{k}}e^{-\frac{1-t\langle G^{2}\rangle}{2}\tr Q^{2}}dQ\\
    &=\frac{k!(2\pi)^{3k/2}\pi^{k(k-1)/2}}{v_{k}(1-t\langle G^{2}\rangle)^{k^{2}/2}}.
\end{align*}

For $I(\mbf{z})$ we note that since $U$ appears only in the combination $ULU^{*}$, the integral reduces to an integral over the coset space $U(2k)/U(k)\times U(k)$. A similar integral has been computed in \cite{fyodorov_systematic_1999}, where the authors used the following parametrisation of the coset space:
\begin{align*}
    U&=\begin{pmatrix}U_{1}&0\\0&U_{2}\end{pmatrix}\begin{pmatrix}\cos\bs\theta&e^{i\bs\phi}\sin\bs\theta\\-e^{-i\bs\phi}\sin\bs\theta&\cos\bs\theta\end{pmatrix}\begin{pmatrix}U_{1}^{*}&0\\0&U_{2}^{*}\end{pmatrix}.
\end{align*}
Here we have $\bs\theta\in[0,\pi/2]^{k},\,\bs\phi\in[0,2\pi]^{k}$ and $U_{1},U_{2}\in U(k)$. The Haar measure transforms as
\begin{align*}
    d\mu_{2k}(U)&=\frac{\prod_{j=1}^{2k-1}j!}{2^{k(k+1)}(2\pi)^{k}\prod_{j=1}^{k-1}(j!)^{4}}\Delta^{2}(\cos2\bs\theta)d(\cos2\bs\theta)d\bs\phi d\mu_{k}(U_{1})d\mu_{k}(U_{2}).
\end{align*}
The integrand becomes
\begin{align*}
    e^{\tr ULU^{*}Z-\frac{1}{2}\tau_{E,t}\tr(ULU^{*}L)^{2}}&=e^{\tr U_{1}\cos2\bs\theta U_{1}^{*}\mbf{z}-\tr U_{2}\cos2\bs\theta U_{2}^{*}\overline{\mbf{z}}-\tau_{E,t}\tr(2\cos^{2}2\bs\theta-1)}.
\end{align*}
We can integrate over $U_{1}$ and $U_{2}$ using the Harish-Chandra--Itzykson--Zuber integral:
\begin{align*}
    \int_{U(k)}e^{\tr UAU^{*}B}&=\left(\prod_{j=1}^{k-1}j!\right)\frac{\det[e^{a_{j}b_{l}}]}{\Delta(A)\Delta(B)}.
\end{align*}
The Vandermonde in $\cos2\bs\theta$ and $\mbf{z}$ cancel, and after changing variables $\bs\lambda=\cos2\bs\theta$ we arrive at 
\begin{align*}
    I(\mbf{z})&=\frac{\prod_{j=1}^{2k-1}j!}{2^{k(k+1)}\prod_{j=1}^{k-1}(j!)^{2}}\det\left[\int_{-1}^{1}e^{-2\tau_{E,t}\lambda^{2}-i(z_{j}-\bar{z}_{l})\lambda}d\lambda\right].
\end{align*}

From this we obtain
\begin{align*}
    f_{N,k}I(\mbf{z})&=\frac{|1-t\langle G^{2}\rangle|^{k}}{(2\pi)^{3k/2}}\left(\frac{2\eta_{E,t}}{|1-t\langle G^{2}\rangle|}\right)^{k(k+1)}\det\left[\int_{-1}^{1}e^{-2\tau_{E,t}\lambda^{2}-i(z_{j}-\bar{z}_{l})\lambda}d\lambda\right],
\end{align*}
and \cref{lem:PsiAsymp} follows.
\end{proof}

\section{Analysis of $\wt{K}_{j}$}\label{sec:K}
We begin with an alternative representation for $\wt{K}_{j}$.
\begin{lemma}\label{lem:K}
We have
\begin{align}
    \wt{K}_{j}(z_{j};A^{(j-1)})&=\frac{N}{(2\pi)^{3/2}t(1-\tau^{2}_{N})\eta_{E,t}^{2}}e^{\frac{N}{t(1+\tau_{N})}\eta^{2}_{E,t}+\frac{t}{(1-\tau_{N})N\eta_{E,t}}y^{2}}\nonumber\\&\int_{-\infty}^{\infty}\int_{-\infty}^{\infty}e^{-\frac{N}{t(1-\tau_{N})}(u-x)^{2}-\frac{N\tau_{N}}{2t(1-\tau^{2}_{N})}v^{2}-\frac{iyv}{(1-\tau_{N})\eta_{E,t}}}\widehat{K}_{j}(u,v;z_{j})dudv,\label{eq:K}
\end{align}
where
\begin{align}
    \widehat{K}_{j}(u,v;z_{j})&=\left(\frac{N}{\pi t(1+\tau_{N})}\right)^{N-j}\int_{S^{N-j}}e^{-\frac{N}{t(1+\tau_{N})}\mbf{v}^{*}\left(\eta^{2}_{E,t}+\left|\ac{A}^{(j-1)}-u\right|^{2}+iv\Im \ac{A}^{(j-1)}\right)\mbf{v}}dS_{N-j}(\mbf{v}).\label{eq:Khat}
\end{align}
\end{lemma}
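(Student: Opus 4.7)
The strategy is a two-parameter Hubbard--Stratonovich decoupling of the quartic-in-$\mbf{v}$ terms in the defining integral \eqref{eq:Kj} for $K_{j}$, which after affine changes of variables produces exactly the claimed integral representation for $\wt{K}_{j}$. The first step is to expand $\|\mbf{c}_{j}\|^{2}=\mbf{v}^{*}|A^{(j-1)}|^{2}\mbf{v}-(\Re a_{j})^{2}-(\Im a_{j})^{2}$ and substitute $A^{(j-1)}=\ac{A}^{(j-1)}+i\sqrt{\tau_{N}}\langle W_{2}\rangle$, verifying that the scalar $\sqrt{\tau_{N}}\langle W_{2}\rangle$ cancels between $\|\mbf{c}_{j}\|^{2}$ and $(\Im a_{j})^{2}$ and is absorbed by the imaginary shift in $w:=E+i\sqrt{\tau_{N}}\langle W_{2}\rangle+tz_{j}/(N\eta_{E,t})$. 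The exponent of the integrand of $K_{j}$ then becomes a polynomial of degree at most two in $a_{j}':=\mbf{v}^{*}\ac{A}^{(j-1)}\mbf{v}$ and in $\mbf{v}^{*}|\ac{A}^{(j-1)}|^{2}\mbf{v}$.

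Collecting coefficients, the net coefficient of $(\Re a_{j}')^{2}$ is the positive constant $\frac{N(1-\tau_{N})}{2t(1+\tau_{N})}$, while that of $(\Im a_{j}')^{2}$ is the negative constant $-\frac{N(1-\tau_{N})}{2t\tau_{N}(1+\tau_{N})}$. I would then apply Hubbard--Stratonovich in its two variants: $e^{\beta\xi^{2}/2}=\sqrt{\beta/(2\pi)}\int e^{-\beta s^{2}/2+\beta s\xi}\,ds$ (for $\beta>0$) is applied to the $(\Re a_{j}')^{2}$ term, introducing a real auxiliary variable linearly coupled to $\Re a_{j}'$, and the Fourier variant $e^{-\beta\xi^{2}/2}=(2\pi\beta)^{-1/2}\int e^{-s'^{2}/(2\beta)-is'\xi}\,ds'$ is applied to the $(\Im a_{j}')^{2}$ term, introducing a second auxiliary variable coupled imaginarily to $\Im a_{j}'$. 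After these two transforms the $\mbf{v}$-integrand is Gaussian in $\mbf{v}$, and the outer integrations are over two real auxiliary variables.

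An affine change of variables $s\mapsto u$ on the real line and $s'\mapsto v$ combined with a horizontal contour shift (permissible because the integrand is entire in $s'$ with Gaussian decay along every horizontal line) is chosen so that the linear-in-$\Re a_{j}'$ and linear-in-$\Im a_{j}'$ coefficients in the $\mbf{v}$-exponent become $\frac{2Nu}{t(1+\tau_{N})}$ and $-\frac{iNv}{t(1+\tau_{N})}$, respectively. Adding and subtracting $\frac{N\eta_{E,t}^{2}}{t(1+\tau_{N})}\|\mbf{v}\|^{2}$ then collapses the $\mbf{v}$-exponent to $-\frac{N}{t(1+\tau_{N})}\mbf{v}^{*}\bigl[\eta_{E,t}^{2}+|\ac{A}^{(j-1)}-u|^{2}+iv\Im\ac{A}^{(j-1)}\bigr]\mbf{v}$, which is exactly the integrand defining $\widehat{K}_{j}(u,v;z_{j})$ in \eqref{eq:Khat}. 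The $(u,v)$-independent remainder produced by completing the square in the two auxiliary variables, together with the normalisation factor $(2\eta_{E,t}^{2}\sqrt{2\pi\tau_{N}})^{-1}$ from the definition \eqref{eq:KTilde} of $\wt{K}_{j}$, assembles into the stated overall prefactor $\frac{N}{(2\pi)^{3/2}t(1-\tau_{N}^{2})\eta_{E,t}^{2}}$ and into the exponential factor depending on $\eta_{E,t}^{2}$ and $y^{2}$.

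I do not expect a conceptual obstacle; once the two transforms are set up the proof is a routine Gaussian calculation. The principal point of care is choosing the correct variant of Hubbard--Stratonovich (real versus imaginary auxiliary coupling) according to the sign of the net coefficient of each quartic term, together with the concomitant horizontal contour shift in the $s'$-variable, and accurately bookkeeping the prefactors and the pure-$y^{2}$ and pure-$\eta_{E,t}^{2}$ constants produced by the two completions of squares.
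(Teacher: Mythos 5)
Your plan is correct and follows essentially the same route as the paper's proof: the paper also expands $\left\|\mbf{c}_{j}\right\|^{2}=\mbf{v}_{j}^{*}|A^{(j-1)}|^{2}\mbf{v}_{j}-|a_{j}|^{2}$, linearises the resulting quartic terms with exactly the coefficients you compute, $+\frac{N(1-\tau_{N})}{2t(1+\tau_{N})}(\Re a_{j})^{2}-\frac{N(1-\tau_{N})}{2t\tau_{N}(1+\tau_{N})}(\Im a_{j})^{2}$, via a two-variable Gaussian (Hubbard--Stratonovich) identity with real coupling to $\Re a_{j}$ and imaginary coupling to $\Im a_{j}$, then completes the square, performs the affine change of variables with the contour shift in $v$, and inserts $\eta_{E,t}^{2}$ to reach \eqref{eq:K}--\eqref{eq:Khat}. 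The only cosmetic differences are that the paper uses one joint two-dimensional Gaussian identity rather than two separate transforms and absorbs $i\sqrt{\tau_{N}}\langle W_{2}\rangle$ through the $v$-shift rather than by substituting $\ac{A}^{(j-1)}$ at the outset.
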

\begin{proof}
Recall the expression for $\wt{K}_{j}$:
\begin{align*}
    \wt{K}_{j}(z_{j};A^{(j-1)})&=\left(\frac{N}{\pi t(1+\tau_{N})}\right)^{N-j}\int_{S^{N-j}}e^{-\frac{N}{t(1+\tau_{N})}\left\|\mbf{c}_{j}\right\|^{2}-\frac{N}{2t}\left[\left(\Re w_{j}-\Re a_{j}\right)+\frac{1}{\tau_{N}}\left(\Im w_{j}-\Im a_{j}\right)^{2}\right]}dS_{N-j}(\mbf{v}_{j}),
\end{align*}
where $w_{j}=E+i\sqrt{\tau_{N}}\langle W_{2}\rangle+tz_{j}/N\eta_{E,t}$ and
\begin{align*}
    a_{j}&=\mbf{v}_{j}^{*}A^{(j-1)}\mbf{v}_{j},\\
    \mbf{c}_{j}&=(1-\mbf{v}_{j}\mbf{v}_{j}^{*})A^{(j-1)}\mbf{v}_{j}.
\end{align*}
The norm of $\left\|\mbf{c}_{j}\right\|$ is given by
\begin{align*}
    \left\|\mbf{c}_{j}\right\|^{2}&=\mbf{v}_{j}^{*}|A^{(j-1)}|^{2}\mbf{v}_{j}-\left|a_{j}\right|^{2}.
\end{align*}
The quadratic term in $a_{j}$ can be linearised by introducing two auxiliary Gaussian integrals:
\begin{align*}
    e^{\frac{N(1-\tau_{N})}{2t(1+\tau_{N})}\left((\Re a_{j})^{2}-\frac{1}{\tau_{N}}(\Im a_{j})^{2}\right)}&=\frac{N(1-\tau_{N})\sqrt{\tau_{N}}}{2\pi t(1+\tau_{N})}\int_{-\infty}^{\infty}\int_{-\infty}^{\infty}e^{-\frac{N(1-\tau_{N})}{2t(1+\tau_{N})}(u^{2}+\tau_{N}v^{2}-2u\Re a_{j}-2iv\Im a_{j})}dudv.
\end{align*}
The $\mbf{v}_{j}$-dependent term $NQ(\mbf{v}_{j})/t$ in the exponent is now a quadratic form with
\begin{align*}
    Q(\mbf{v}_{j})&=\frac{1}{1+\tau_{N}}\mbf{v}_{j}^{*}\Big(|A^{(j-1)}|^{2}-\big((1+\tau_{N})\Re w_{j}+(1-\tau_{N})u\big)\Re A^{(j-1)}\\&-\frac{1}{\tau_{N}}\left((1+\tau_{N})\Im w_{j}-i\tau_{N}(1-\tau_{N})v\right)\Im A^{(j-1)}\Big)\mbf{v}_{j}.
\end{align*}
Completing the square we find
\begin{align*}
    Q(\mbf{v}_{j})&=\wt{Q}(\mbf{v}_{j})-\frac{1}{4(1+\tau_{N})}\Big(\big((1+\tau_{N})\Re w_{j}+(1-\tau_{N})u\big)^{2}-\big(i(1+\tau_{N})\Im w_{j}/\tau_{N}+(1-\tau_{N})v\big)^{2}\Big),
\end{align*}
where
\begin{align*}
    \wt{Q}(\mbf{v}_{j})&=\frac{1}{1+\tau_{N}}\mbf{v}_{j}^{*}\Big(A^{(j-1)*}-\frac{(1+\tau_{N})(\Re w_{j}-i\Im w_{j}/\tau_{N})+(1-\tau_{N})(u-v)}{2}\Big)\\&\Big(A^{(j-1)}-\frac{(1+\tau_{N})(\Re w_{j}+i\Im w_{j}/\tau_{N})+(1-\tau_{N})(u+v)}{2}\Big)\mbf{v}_{j}.
\end{align*}
Consider now the term in the exponent depending on $\Re w_{j},\Im w_{j},u,v$, which we denote by $Nq/2t$. We have
\begin{align*}
    q&=\Re w^{2}_{j}+\frac{1}{\tau_{N}}\Im w_{j}^{2}+\frac{1-\tau_{N}}{1+\tau_{N}}(u^{2}+\tau_{N}v^{2})\\&-\frac{1}{2(1+\tau_{N})}\Big(\big((1+\tau_{N})\Re w_{j}+(1-\tau_{N})u\big)^{2}-\big(i(1+\tau_{N})\Im w_{j}/\tau_{N}+(1-\tau_{N})v\big)^{2}\Big)\\
    &=\frac{1-\tau_{N}}{2}\left((u-\Re w_{j})^{2}+(v+i\Im w_{j}/\tau_{N})^{2}\right).
\end{align*}
We now make the change of variables
\begin{align*}
    u&\mapsto\frac{2u-(1+\tau_{N})\Re w_{j}}{1-\tau_{N}},\\
    v&\mapsto\frac{2(v+i\sqrt{\tau_{N}}\langle W_{2}\rangle)-i(1+\tau_{N})\Im w_{j}/\tau_{N}}{1-\tau_{N}},
\end{align*}
and shift $v$ back to the real axis. This does not change the value of the integral since the exponent behaves as $-v^{2}$ for large $v$. After this change of variables we find
\begin{align*}
    \wt{Q}(\mbf{v}_{j})&=\frac{1}{1+\tau_{N}}\mbf{v}_{j}^{*}\Big(\ac{A}^{(j-1)*}-u+v\Big)\Big(\ac{A}^{(j-1)}-u-v\Big)\mbf{v}_{j},\\
    q&=\frac{2}{1-\tau_{N}}\left(\left(u-E-\frac{tx_{j}}{N\eta_{E,t}}\right)^{2}+\left(v-\frac{ity_{j}}{N\eta_{E,t}}\right)^{2}\right),
\end{align*}
and hence, after interchanging the $\mbf{v}_{j}$ integral with the $u$ and $v$ integrals,
\begin{align*}
    \wt{K}_{j}(z_{j};A^{(j-1)})&=\frac{N}{2^{1/2}\pi^{3/2}t(1-\tau^{2}_{N})\eta_{E,t}^{2}}\int_{-\infty}^{\infty}\int_{-\infty}^{\infty}e^{-\frac{N}{t(1-\tau_{N})}\left(\left(u-E-\frac{tx_{j}}{N\eta_{E,t}}\right)^{2}+\left(v-\frac{ity_{j}}{N\eta_{E,t}}\right)^{2}\right)}\\&\left(\frac{N}{\pi t(1+\tau_{N})}\right)^{N-j}\int_{S^{N-j}}e^{-\frac{N}{t(1+\tau_{N})}\mbf{v}_{j}^{*}\big(\ac{A}^{(j-1)*}-u+v\big)\big(\ac{A}^{(j-1)}-u-v\big)\mbf{v}_{j}}dS_{N-j}(\mbf{v}_{j})dudv.
\end{align*}
Since
\begin{align*}
    1&=\exp\left\{\frac{N\eta^{2}}{t(1+\tau_{N})}-\frac{N\eta^{2}}{t(1+\tau_{N})}\right\}=\exp\left\{\frac{N\eta^{2}}{t(1+\tau_{N})}-\frac{N}{t(1+\tau_{N})}\mbf{v}_{j}^{*}\eta^{2}\mbf{v}_{j}\right\},
\end{align*}
we can insert $\eta_{E,t}^{2}$ into the quadratic form and arrive at \eqref{eq:K}.
\end{proof}

\begin{proof}[Proof of \Cref{lem:KBound}]
Let
\begin{align}
    \wt{K}_{j,upper}&=\det^{-1}\left(\eta_{E,t}^{2}+|\ac{A}^{(j-1)}-u_{E,t}|^{2}\right)e^{-\frac{N}{t(1-\tau_{N})}\Re(u_{E,t}-w_{j})^{2}+\frac{N}{t}\eta_{E,t}^{2}}.\label{eq:Kupper}
\end{align}
Our aim is to prove that
\begin{align}
    \wt{K}_{j}(z_{j};A^{(j-1)})&\leq\frac{C}{\sqrt{t}}\wt{K}_{j,upper}.\label{eq:Kbound1}
\end{align}
Dividing $\wt{K}_{j}$ by $\wt{K}_{j,upper}$ we can write
\begin{align}
    \frac{\wt{K}_{j}}{\wt{K}_{j,upper}}&\leq\frac{CN}{t^{5/2}}\int_{-\infty}^{\infty}e^{-N(\phi(u)-\phi(u_{E,t}))}h_{j}(u)r_{j}(u)du,
\end{align}
where
\begin{align}
    \phi(u)&=\frac{(u-E)^{2}}{t}+\frac{1}{N}\log\det\left(\eta_{E,t}^{2}+(W_{1}-u)^{2}\right),\label{eq:phi}\\
    h_{j}(u)&=e^{\frac{C|u-u_{E,t}|}{t}}\int_{-\infty}^{\infty}e^{\frac{iNp}{t(1+\tau_{N})}}\det^{-1}\left(1+ipH^{(j-1)}_{u}\right)dp,\label{eq:h}\\
    r_{j}(u)&=\frac{\det\left(\eta_{E,t}^{2}+(W_{1}-u)^{2}\right)\det\left(\eta_{E,t}^{2}+|\ac{A}^{(j-1)}-u_{E,t}|^{2}\right)}{\det\left(\eta_{E,t}^{2}+(W_{1}-u_{E,t})^{2}\right)\det\left(\eta_{E,t}^{2}+|\ac{A}^{(j-1)}-u|^{2}\right)}.
\end{align}
To arrive at this expression we have inserted absolute values into the integrand in \eqref{eq:K} and \eqref{eq:Khat} and used \cref{lem:sphericalint} to rewrite the latter as the integral over $p$ in \eqref{eq:h}. Inserting absolute values leaves $e^{-\frac{N\tau_{N}}{2t(1-\tau_{N}^{2})}v^{2}}$ as the only $v$-dependent, which after integrating gives a factor $\sqrt{t}$.

For $\phi$, we note that $\phi'(u_{E,t})=0$ by definition and by \eqref{eq:C1.3}
\begin{align*}
    \phi''(u)&=\frac{2}{t}\left(1-t\Re\Tr{G^{2}(u+i\eta_{E,t})}\right)\\
    &=\frac{2+O(t)}{t},
\end{align*}
when $|u|<2$. Since $|E|<2$, we have by \eqref{eq:C1.1}
\begin{align*}
    \phi'(u)&=\frac{2(u-E)}{t}-2\Re\langle G(u+i\eta_{E,t})\rangle\\
    &\geq\frac{C}{t},
\end{align*}
when $u>2$, and likewise
\begin{align*}
    \phi'(u)&\leq-\frac{C}{t},
\end{align*}
when $u<2$. Therefore we obtain
\begin{align}
    \phi(u)-\phi(u_{E,t})&\geq\frac{C}{t}(u-u_{E,t})^{2}.\label{eq:phiBound}
\end{align}

Since the integrand in \eqref{eq:h} decays as $e^{-CNp^{2}\|H^{(j-1)}_{u}\|^{2}}$, we can truncate the integration domain to $|p|<C\|\ac{A}^{(j-1)}-u\|^{2}<C\|\ac{A}-u\|^{2}$, after which we simply bound the integrand by 1 to obtain
\begin{align*}
    h_{j}(u)&\leq C(1+|u-u_{E,t}|^{2})e^{\frac{C|u-u_{E,t}|}{t}}.
\end{align*}
To bound $r_{j}(u)$, we use Cramer's rule to replace $A^{(j-1)}$ with $A$ in the numerator and denominator, arriving at:
\begin{align*}
    r_{j}(u)&=\frac{\det\left(1+i\sqrt{\tau_{N}}G\ac{W}_{2}\right)}{\det\left(1+i\sqrt{\tau_{N}}G(u+i\eta_{E,t})\ac{W}_{2}\right)}\prod_{l=1}^{j-1}\frac{\det V_{l}^{*}\mc{F}^{(l-1)}V_{l}}{\det V_{l}^{*}\mc{F}^{(l-1)}_{u}V_{l}}.
\end{align*}
By \eqref{eq:det1}, both the numerator and denominator in the first factor are $O(1)$ uniformly in $u\in\mbb{R}$. For each term in the product over $l$ we use the resolvent identity:
\begin{align*}
    \frac{\det V_{l}^{*}\mc{F}^{(l-1)}V_{l}}{\det V_{l}^{*}\mc{F}^{(l-1)}_{u}V_{l}}&=\det\left[1-(u-u_{E,t})(V_{l}^{*}\mc{F}^{(l-1)}_{u}V_{l})^{-1}V_{l}^{*}\mc{F}^{(l-1)}(\sigma_{X}\otimes1)\mc{F}^{(l-1)}_{u}V_{l}\right].
\end{align*}
By \Cref{lem:minorresolvent}, the matrix on the right hand side is bounded by $C|u-u_{E,t}|/t$; since it is rank 2 we find
\begin{align}
    r_{j}(u)&\leq C\left(1+\frac{|u-u_{E,t}|}{t}\right)^{2}.\label{eq:rBound}
\end{align}

The upshot of these bounds is that we can restrict $u$ to the region $|u-u_{E,t}|<\sqrt{\frac{t}{N}}\log N$. In this region we revisit the $p$-integral in \eqref{eq:h}. Truncating the integral to $|p|<C$ and using $\log(1+x)\leq x/(1+x)$ we find
\begin{align*}
    |h_{j}(u)|&\leq C\int_{-C}^{C}\exp\left\{-\frac{1}{2}\left(1+\frac{p^{2}}{\eta_{E,t}^{2}}\right)^{-1}p^{2}\tr(H^{(j-1)}_{u})^{2}\right\}dp+O(e^{-CN}).
\end{align*}
By interlacing and \eqref{eq:traceEstimate} we have
\begin{align*}
    \tr(H^{(j-1)}_{u})^{2}&=\tr|G(u+i\eta_{E,t})|^{4}+O\left(\frac{N^{1/2}}{t^{7/2}}\right)\\
    &\geq\frac{CN}{t^{3}},
\end{align*}
from which we obtain
\begin{align}
    |h_{j}(u)|\leq C\sqrt{\frac{t^{3}}{N}}.\label{eq:hBound}
\end{align}
Since we also have $r_{j}(u)\leq C$ in this region, \eqref{eq:Kbound1} follows after bounding $\phi(u)-\phi(u_{E,t})$ below by $C(u-u_{E,t})^{2}/t$ and performing the Gaussian integral over $u$.
\end{proof}

\begin{proof}[Proof of \Cref{lem:KAsymp}]
Assume that $\mbf{v}_{l}\in\mc{E}_{l}$ for $l=1,...,j-1$. We multiply $\wt{K}_{j}$ by $\psi_{j}(A^{(j-1)})$ defined in \eqref{eq:psi_j} and rewrite it in the following way:
\begin{align*}
    \psi_{j}(A^{(j-1)})\wt{K}_{j}(z_{j};A^{(j-1)})&=p_{N}|1-t\langle G^{2}\rangle|\int_{-\infty}^{\infty}e^{-N\left(\phi(u)-\phi(u_{E,t})\right)}f_{j}(u)I_{j}(u)du,
\end{align*}
where $\phi$ was defined in \eqref{eq:phi} and
\begin{align}
    f_{j}(u)&=\exp\left\{\frac{2x_{j}(u-u_{E,t})}{(1-\tau_{N})\eta_{E,t}}-\frac{N\tau_{N}}{t(1-\tau_{N})}((u-E)^{2}-(u_{E,t}-E)^{2})\right\}\nonumber\\&\times e^{\alpha_{E,t}+\beta_{E,t}}\frac{\det\left(\eta_{E,t}^{2}+(W_{1}-u)^{2}\right)}{\det\left(\eta_{E,t}^{2}+|\ac{A}^{(j-1)}-u|^{2}\right)}\prod_{l=1}^{j-1}|\mbf{v}_{l}^{*}G^{(l-1)}\mbf{v}_{l}|^{2},\label{eq:f}\\
    I_{j}(u)&=\det\left(\eta_{E,t}^{2}+|\ac{A}^{(j-1)}-u|^{2}\right)\int_{-\infty}^{\infty}e^{-\frac{N\tau_{N}}{2t(1-\tau^{2}_{N})}v^{2}-\frac{iy_{j}v}{(1-\tau_{N})\eta_{E,t}}}\widehat{K}_{j}(u,v;z_{j})dv,\label{eq:Ij}\\
    p_{N}&=\frac{N}{(2\pi)^{3/2}t(1-\tau_{N}^{2})\eta_{E,t}^{2}}.\label{eq:pN}
\end{align}
First we bound $f_{j}$ and $I_{j}$ uniformly in $u$ so that we can restrict $u$ to a neighbourhood of $u_{E,t}$. Using \eqref{eq:det1} we have
\begin{align*}
    e^{\alpha_{E,t}+\beta_{E,t}}&=\left[1+O\left(\frac{1}{Nt}\right)\right]\det\left(1+i\sqrt{\tau_{N}}\mc{G}(\sigma_{Y}\otimes\ac{W}_{2})\right),
\end{align*}
from which we obtain
\begin{align*}
    f_{j}(u)&=r_{j}(u)\prod_{l=1}^{j-1}\frac{\det V_{l}^{*}\mc{G}^{(l-1)}V_{l}}{\det V_{l}^{*}\mc{F}^{(l-1)} V_{l}}.
\end{align*}
By \Cref{lem:minornorms} we have
\begin{align*}
    \|(V_{l}^{*}\mc{G}^{(l-1)}V_{l})^{-1}\|&\leq C\eta_{E,t},\\
    \|\mc{G}^{(l-1)}(\sigma_{Y}\otimes\ac{W}^{(l-1)}_{2})\mc{F}^{(l-1)}\|&\leq\frac{C(N\eta_{E,t})^{1/2n_{\epsilon}}}{\eta_{E,t}^{3/2}}.
\end{align*}
Then we can bound each term in the product over $l$:
\begin{align*}
    \frac{\det V_{l}^{*}\mc{G}^{(l-1)}V_{l}}{\det V_{l}^{*}\mc{F}^{(l-1)}V_{l}}&=\det^{-1}\left[1+i\sqrt{\tau_{N}}(V_{l}^{*}\mc{G}^{(l-1)}V_{l})^{-1}V_{l}^{*}\mc{G}^{(l-1)}(\sigma_{Y}\otimes\ac{W}_{2})\mc{F}^{(l-1)}V_{l}\right]\\
    &=1+O\left(\frac{1}{(Nt)^{\frac{1}{2}\left(1-\frac{1}{n_{\epsilon}}\right)}}\right).
\end{align*}
Hence by the bound on $r_{j}$ in \eqref{eq:rBound} we have
\begin{align*}
    f_{j}(u)&\leq C\left(1+\frac{|u-u_{E,t}|^{2}}{t}\right)^{2}e^{\frac{C|u-u_{E,t}|}{t}}.
\end{align*}
For $I_{j}$ we insert absolute values inside the integral to obtain
\begin{align*}
    |I_{j}(u)|&\leq C\sqrt{t}h_{j}(u)\leq C\sqrt{t}(1+|u-u_{E,t}|^{2}),
\end{align*}
where $h_{j}$ was defined in \eqref{eq:h} and bounded in \eqref{eq:hBound}. It then follows from \eqref{eq:phiBound} that we can restrict $u$ to the region $|u-u_{E,t}|<\sqrt{\frac{t}{N}}\log N$.

Now we fix a $u$ in this region. Using \Cref{lem:sphericalint} we can rewrite $I_{j}(u)$ as follows
\begin{align}
    I_{j}(u)&=\int_{-\infty}^{\infty}\int_{-\infty}^{\infty}e^{-\frac{N\tau_{N}}{2t(1-\tau^{2}_{N})}v^{2}-\frac{iy_{j}v}{(1-\tau_{N})\eta_{E,t}}+\frac{iNp}{t(1+\tau_{N})}}\det^{-1}\left(1+iM^{(j-1)}_{u}\right)dpdv,\label{eq:I2}
\end{align}
where
\begin{align}
    M^{(j-1)}_{u}&=\sqrt{H^{(j-1)}_{u}}\left(p+v\Im \ac{A}^{(j-1)}\right)\sqrt{H^{(j-1)}_{u}}.\label{eq:Mj}
\end{align}
First we note that the $p$ integral converges uniformly in $v$ while the exponent decays as $e^{-|v|/t}$. Hence we can truncate the $v$-integral to the region $|v|<C$. Then we note that, for $|v|<C$, the determinant in the integrand decays as $e^{-N|p|}$ and so we can truncate the $p$-integral to the region $|p|<C$. 

By \eqref{eq:HWnorm} we can bound the norm of the matrix inside the determinant in the integrand of $I_{j}(u)$ by
\begin{align*}
    \left\|M^{(j-1)}_{u}\right\|&\leq\frac{|p|}{\eta_{E,t}^{2}}+\frac{|v|}{(Nt)^{1/4}t}.
\end{align*}
Using $\log(1+x)\geq x/(1+x)$ we have
\begin{align}
    \left|\det^{-1}\left(1+iM^{(j-1)}_{u}\right)\right|&\leq\exp\left\{-\frac{a^{(j-1)}p^{2}-2b^{(j-1)}|pv|+c^{(j-1)}v^{2}}{1+\alpha p^{2}+2\beta|pv|+\gamma v^{2}}\right\},\label{eq:qFormRatio}
\end{align}
where
\begin{align*}
    a^{(j-1)}&=\tr{(H^{(j-1)}_{u})^{2}},\\
    b^{(j-1)}&=\sqrt{\tau_{N}}\left|\tr{H^{(j-1)}_{u}\ac{W}^{(j-1)}_{2}H^{(j-1)}_{u}}\right|,\\
    c^{(j-1)}&=\tau_{N}\tr{\left(H^{(j-1)}_{u}\ac{W}^{(j-1)}_{2}\right)^{2}}.
\end{align*}
Since we have assumed $\mbf{v}_{l}\in\mc{E}_{l}$ for $l=1,...,j-1$, we can replace $A^{(j-1)}$ by $A$ in each of these quantities and then use the estimate in \eqref{eq:traceEstimate}. Consider $a^{(j-1)}$; rewriting the trace in terms of $\mc{F}^{(j-1)}_{u}$ we find
\begin{align*}
    a^{(j-1)}&=\frac{1}{4\eta_{E,t}^{3}}\Im\tr\mc{F}^{(j-1)}_{u}-\frac{1}{4\eta_{E,t}^{2}}\Re\tr(\mc{F}^{(j-1)}_{u})^{2}.
\end{align*}
By interlacing $\Tr{(\mc{F}^{(j-1)}_{u})^{m}}=\Tr{\mc{F}^{m}_{u}}+O(1/Nt^{m})$ and hence
\begin{align*}
    a^{(j-1)}&=\frac{1}{4\eta_{E,t}^{3}}\Im\tr\mc{F}_{u}-\frac{1}{4\eta_{E,t}^{2}}\Re\tr\mc{F}^{2}_{u}+O\left(\frac{1}{t^{4}}\right)\\
    &=\frac{1}{2\eta_{E,t}^{2}}\tr|\mc{F}_{u}|^{4}+O\left(\frac{1}{t^{4}}\right).
\end{align*}
By \cref{eq:traceEstimate} we can replace $\mc{F}$ by $\mc{G}$:
\begin{align*}
    a^{(j-1)}&=\frac{1}{4\eta_{E,t}^{3}}\Im\tr\mc{G}_{u}-\frac{1}{4\eta_{E,t}^{2}}\Re\tr\mc{G}^{2}_{u}+O\left(\frac{N^{1/2}}{t^{7/2}}\right).
\end{align*}
By the resolvent identity and \eqref{eq:C1.3} we can replace $u$ by $u_{E,t}$:
\begin{align*}
    a^{(j-1)}&=\frac{1}{4\eta_{E,t}^{3}}\Im\tr\mc{G}-\frac{1}{4\eta_{E,t}^{2}}\Re\tr\mc{G}^{2}+O\left(\frac{N^{1/2}}{t^{7/2}}\right)\\
    &=\left[1+O\left(\frac{1}{\sqrt{Nt}}\right)\right]\frac{N(1-t\Re\langle G^{2}\rangle)}{2\eta_{E,t}^{2}t}.
\end{align*}

For $b^{(j-1)}$ we perform similar steps, except that in place of interlacing we use \Cref{lem:minorresolvent} and the bounds from \Cref{lem:minornorms}. Rewriting $b^{(j-1)}$ in terms of $\mc{F}^{(j-1)}_{u}$ we have
\begin{align*}
    b^{(j-1)}&=\frac{\sqrt{\tau_{N}}}{4\eta_{E,t}^{3}}\Im\tr\mc{F}^{(j-1)}_{u}(1\otimes\ac{W}^{(j-1)}_{2})-\frac{\sqrt{\tau_{N}}}{4\eta_{E,t}^{2}}\Re\tr(\mc{F}^{(j-1)}_{u})^{2}(1\otimes\ac{W}^{(j-1)}_{2}).
\end{align*}
Consider the first term; using \eqref{eq:mcFminor} we have
\begin{align*}
    \tr\mc{F}^{(l)}_{u}(1\otimes\ac{W}^{(l)}_{2})&=\tr\mc{F}^{(l-1)}_{u}(1\otimes\ac{W}^{(l-1)}_{2})-\tr(V_{l}^{*}\mc{F}^{(l-1)}_{u}V_{l})^{-1}V_{l}^{*}\mc{F}^{(l-1)}_{u}(1\otimes\ac{W}^{(l-1)}_{2})\mc{F}^{(l-1)}_{u}V_{l}.
\end{align*}
From the proof of \Cref{lem:minornorms} we have
\begin{align*}
    \|(V_{l}^{*}\mc{F}^{(l-1)}_{u}V_{l})^{-1}\|&\leq C\eta_{E,t}\leq Ct.
\end{align*}
By the resolvent identity and \eqref{eq:mcFWmcFminorNorm} we have
\begin{align*}
    \|\mc{F}^{(l-1)}_{u}(1\otimes\ac{W}^{(l-1)}_{2})\mc{F}^{(l-1)}_{u}\|&\leq\left(1+|u-u_{E,t}|\|\mc{F}^{(l-1)}_{u}\|\right)^{2}\|\mc{F}^{(l-1)}(1\otimes\ac{W}^{(l-1)}_{2})\mc{F}^{(l-1)}\|\\
    &\leq\frac{C(Nt)^{1/2n_{\epsilon}}}{t^{3/2}}.
\end{align*}
Hence we find
\begin{align*}
    \tr\mc{F}^{(l)}_{u}(1\otimes\ac{W}^{(l)}_{2})&=\tr\mc{F}_{u}(1\otimes\ac{W}_{2})+O\left(\frac{(Nt)^{1/2n_{\epsilon}}}{\sqrt{t}}\right).
\end{align*}
By \eqref{eq:traceEstimate} and \eqref{eq:C3.1} we have
\begin{align*}
    \tr\mc{F}_{u}(1\otimes\ac{W}_{2})&=\tr\mc{G}_{u}(1\otimes\ac{W}_{2})+O\left(\sqrt{N}\right)\\
    &=2i\Im\tr G(u+i\eta_{E,t})\ac{W}_{2}+O\left(\sqrt{N}\right)\\
    &=O\left(\sqrt{N}\right).
\end{align*}
Estimating $\tr(\mc{F}^{(j-1)}_{u})^{2}(1\otimes\ac{W}^{(j-1)}_{2})$ in the same way we find
\begin{align*}
    b^{(j-1)}&=O\left(\frac{1}{t^{3}}\right).
\end{align*}

For $c^{(j-1)}$ we have
\begin{align*}
    c^{(j-1)}&=\frac{\tau_{N}}{2\eta_{E,t}^{2}}\tr\left(\Im(\mc{F}^{(j-1)}_{u})(1\otimes\ac{W}_{2}^{(j-1)})\right)^{2}.
\end{align*}
Write $\Im\mc{F}=\frac{1}{2i}(\mc{F}-\mc{F}^{*})$ and consider the first term in the resulting sum:
\begin{align*}
    \tr\left(\mc{F}^{(l)}_{u}(1\otimes\ac{W}^{(l)}_{2})\right)^{2}&=\tr\left(\mc{F}^{(l-1)}_{u}(1\otimes\ac{W}^{(l-1)}_{2})\right)^{2}\\
    &-2\tr\left(V_{l}^{*}\mc{F}^{(l-1)}_{u}V_{l}\right)^{-1}V_{l}^{*}\left(\mc{F}^{(l-1)}_{u}(1\otimes\ac{W}^{(l-1)}_{2})\right)^{2}\mc{F}^{(l-1)}_{u}V_{l}\\
    &+\tr\left(\left(V_{l}^{*}\mc{F}^{(l-1)}_{u}V_{l}\right)^{-1}V_{l}^{*}\mc{F}^{(l-1)}_{u}(1\otimes\ac{W}^{(l-1)}_{2})\mc{F}^{(l-1)}_{u}V_{l}\right)^{2}.
\end{align*}
Using \eqref{eq:minorNorm1}, \eqref{eq:mcFWmcFminorNorm} and \eqref{eq:mcFW2mcFminorNorm} to bound the error terms we find
\begin{align*}
    \tr\left(\mc{F}^{(l)}_{u}(1\otimes\ac{W}^{(l-1)}_{2})\right)^{2}&=\tr\left(\mc{F}_{u}(1\otimes\ac{W}_{2})\right)^{2}+O\left(\frac{(Nt)^{1/n_{\epsilon}}}{t}\right).
\end{align*}
Doing the same for the remaining terms we find
\begin{align*}
    c^{(j-1)}&=\frac{\tau_{N}}{2\eta_{E,t}^{2}}\tr\left(\Im(\mc{F}_{u})(1\otimes\ac{W}_{2})\right)^{2}+O\left(\frac{1}{t^{2}(Nt)^{1-1/n_{\epsilon}}}\right)\\
    &=\frac{\tau_{N}}{2\eta_{E,t}^{2}}\tr\left(\Im(\mc{G}_{u}(1\otimes\ac{W}_{2})\right)^{2}+O\left(\frac{1}{t^{2}\sqrt{Nt}}\right)\\
    &=\frac{\tau_{N}}{\eta_{E,t}^{2}}\tr(\Im(G)\ac{W}_{2})^{2}+O\left(\frac{\log N}{t^{2}\sqrt{Nt}}\right)\\
    &=\left[1+O\left(\frac{\log N}{\sqrt{Nt}}\right)\right]\frac{\beta_{E,t}}{\eta_{E,t}^{2}}.
\end{align*}
For the second equality we used \eqref{eq:traceEstimate}; for the third we used the resolvent identity and \eqref{eq:C3.2'} to replace $u$ with $u_{E,t}$; the fourth is the definition of $\beta_{E,t}$.

In summary, we have the bounds
\begin{align*}
    a^{(j-1)}&\geq\frac{CN}{t^{3}},\quad b^{(j-1)}\leq\frac{C}{t^{3}},\quad c^{(j-1)}\geq\frac{C}{t^{2}},\\
    \alpha&\leq\frac{C}{t^{4}},\quad\beta\leq\frac{C}{(Nt)^{1/4}t^{3}},\quad\gamma\leq\frac{C}{(Nt)^{1/2}t^{2}}.
\end{align*}
By elementary manipulations of the exponent on the right hand side of \eqref{eq:qFormRatio}, we deduce that in the region in which $t\log N\leq|v|<C$ and $\sqrt{\frac{t^{3}}{N}}\log N\leq |p|<C$, we have
\begin{align*}
    \left|\det^{-1}\left(1+iM^{(j-1)}_{u}\right)\right|&\leq e^{-C\log^{2}N}.
\end{align*}
Inside the region, we can truncate the Taylor series to second order:
\begin{align*}
    \det^{-1}\left(1+iM^{(j-1)}_{u}\right)&=\exp\left\{-i\tr M^{(j-1)}_{u}-\frac{1}{2}\tr(M^{(j-1)}_{u})^{2}+O\left(\frac{\log^{3}N}{Nt}\right)\right\}.
\end{align*}
Note that due to the relations in \eqref{eq:relations} and the fact that $\tr\rho=0$, we have
\begin{align*}
    \langle(\mc{G}_{u}(\sigma_{Y}\otimes\ac{W}_{2}))^{2n+1}\mc{G}_{u}(1\otimes\ac{W}_{2})\rangle&=0
\end{align*}
for all $n\in\mbb{N}$. From this, the resolvent identity and the bounds in \eqref{eq:Dnepsilon}, \eqref{eq:C3.1} and \eqref{eq:C3.2} we obtain
\begin{align*}
    \langle \mc{F}_{u}(1\otimes\ac{W}_{2}\rangle&=\sum_{n=0}^{n_{\epsilon}-1}\tau^{n}_{N}\langle(\mc{G}_{u}(\sigma_{Y}\otimes\ac{W}_{2}))^{2n}\mc{G}_{u}(1\otimes\ac{W}_{2})+O\left(\frac{1}{N^{23}}\right)\\
    &=O\left(\frac{N^{\epsilon/2}}{N\sqrt{t}}\right).
\end{align*}
Using this, interlacing and \eqref{eq:traceEstimate} we are now able to estimate $\tr M^{(j-1)}$:
\begin{align*}
    \tr M^{(j-1)}_{u}&=p\tr H^{(j-1)}_{u}+\sqrt{\tau_{N}}v\tr H^{(j-1)}_{u}\ac{W}^{(j-1)}_{2}\\
    &=\frac{p}{2\eta_{E,t}}\Im\tr\mc{F}^{(j-1)}_{u}+\frac{\sqrt{\tau_{N}}v}{2}\Im\tr\mc{F}^{(j-1)}_{u}(1\otimes\ac{W}^{(j-1)}_{2})\\
    &=\frac{Np}{t}+\frac{(u-u_{E,t})p}{\eta_{E,t}}\Im\tr G^{2}+O\left(\frac{1}{(Nt)^{1/4}}\right).
\end{align*}
For $\tr(M^{(j-1)}_{u})^{2}$ we have already seen that
\begin{align*}
    \tr(M^{(j-1)}_{u})^{2}&=a^{(j-1)}p^{2}+2b^{(j-1)}pv+c^{(j-1)}v^{2}\\
    &=\frac{N(1-t\Re\langle G^{2}\rangle)}{2\eta^{2}_{E,t}t}p^{2}+\frac{\beta_{E,t}v^{2}}{\eta^{2}_{E,t}}+O\left(\frac{\log^{2}N}{\sqrt{Nt}}\right).
\end{align*}
Thus we find
\begin{align}
    I_{j}(u)&=\int_{|p|<\sqrt{\frac{t^{3}}{N}}\log N}\int_{|v|<t\log N}\left[1+O\left(\frac{1}{(Nt)^{1/4}}\right)\right]\nonumber\\&\times\exp\left\{-\frac{N(1-t\Re\langle G^{2}\rangle)}{4\eta_{E,t}^{2}t}p^{2}-\frac{i(u-u_{E,t})p}{\eta_{E,t}}\Im\tr G^{2}-\frac{\tau_{E,t}}{2\eta_{E,t}^{2}}v^{2}-\frac{iy_{j}v}{\eta_{E,t}}\right\}dpdv,\label{eq:I3}
\end{align}
uniformly in $|u-u_{E,t}|<\sqrt{\frac{t}{N}}\log N$.

For $f_{j}$, we first note that when $|u-u_{E,t}|<\sqrt{\frac{t}{N}}\log N$ the exponent in the first line of \eqref{eq:f} is $O\left(\frac{\log N}{\sqrt{Nt}}\right)$. Thus we have
\begin{align*}
    f_{j}(u)&=\left[1+O\left(\frac{1}{(Nt)^{\frac{1}{2}\left(1-\frac{1}{n_{\epsilon}}\right)}}\right)\right]\frac{e^{\alpha_{E,t}+\beta_{E,t}}}{\det\left(1+i\sqrt{\tau_{N}}\mc{G}_{u}(\sigma_{Y}\otimes\ac{W}_{2})\right)}\prod_{l=1}^{j-1}\frac{\det V_{l}^{*}\mc{G}^{(l-1)}V_{l}}{\det V_{l}^{*}\mc{F}^{(l-1)}_{u}V_{l}}.
\end{align*}
By \eqref{eq:det1} we have
\begin{align*}
    \det\left(1+\sqrt{\tau_{N}}\mc{G}_{u}(\sigma_{Y}\otimes\ac{W}_{2})\right)&=\left[1+O\left(\frac{1}{Nt}\right)\right]\exp\left\{N\tau_{N}\langle G_{u+i\eta_{E,t}}\ac{W}_{2}G^{*}_{u+i\eta_{E,t}}\ac{W}_{2}\rangle\right\}.
\end{align*}
By the resolvent identity and \eqref{eq:C3.2'} we can replace $u$ by $u_{E,t}$:
\begin{align*}
    N\tau_{N}\langle G_{u+i\eta_{E,t}}\ac{W}_{2}G^{*}_{u+i\eta_{E,t}}\ac{W}_{2}\rangle&=N\tau_{N}\langle G\ac{W}_{2}G^{*}\ac{W}_{2}\rangle+O\left(\frac{\log N}{\sqrt{Nt}}\right)\\
    &=\alpha_{E,t}+\beta_{E,t}+O\left(\frac{\log N}{\sqrt{Nt}}\right).
\end{align*}
For each term in the product over $l$, we argue as before using the resolvent identity, \Cref{lem:minorresolvent} and \Cref{lem:minornorms}:
\begin{align*}
    \frac{\det V_{l}^{*}\mc{G}^{(l-1)}V_{l}}{\det V_{l}^{*}\mc{F}^{(l-1)}_{u}V_{l}}&=\frac{\det\left[1-(u-u_{E,t})(V_{l}^{*}\mc{F}^{(l-1)}_{u}V_{l})^{-1}V_{l}^{*}\mc{F}^{(l-1)}(\sigma_{X}\otimes1)\mc{F}^{(l-1)}_{u}V_{l}\right]}{\det\left[1+i\sqrt{\tau_{N}}(V_{l}^{*}\mc{G}^{(l-1)}V_{l})^{-1}V_{l}^{*}\mc{G}^{(l-1)}(\sigma_{Y}\otimes\ac{W}^{(l-1)}_{2})\mc{F}^{(l-1)}V_{l}\right]}\\
    &=1+O\left(\frac{1}{(Nt)^{\frac{1}{2}\left(1-\frac{1}{n_{\epsilon}}\right)}}\right).
\end{align*}
Hence
\begin{align}
    f_{j}(u)&=1+O\left(\frac{1}{(Nt)^{\frac{1}{2}\left(1-\frac{1}{n_{\epsilon}}\right)}}\right).\label{eq:f2}
\end{align}

Inserting \eqref{eq:I3} and \eqref{eq:f2} into $\wt{K}_{j}$ and using the fact that
\begin{align*}
    \phi(u)-\phi(u_{E,t})&=\frac{2(1-t\Re\langle G^{2}\rangle)}{t}(u-u_{E,t})^{2}+O\left(\frac{\log^{3}N}{\sqrt{Nt}}\right),
\end{align*}
we obtain
\begin{align*}
    &\psi_{j}(A^{(j-1)})\wt{K}_{j}(z_{j};A^{(j-1)})=\\&p_{N}|1-t\langle G^{2}\rangle|\int_{|u-u_{E,t}|<\sqrt{\frac{t}{N}}\log N}\int_{|p|<\sqrt{\frac{t^{3}}{N}}\log N}\int_{|v|<t\log N}\left[1+O\left(\frac{1}{(Nt)^{\frac{1}{4}}}\right)\right]\\
    &\times\exp\left\{-\frac{N(1-t\Re\langle G^{2}\rangle|^{2})}{t}(u-u_{E,t})^{2}-\frac{N(1-t\Re\langle G^{2}\rangle)}{4\eta_{E,t}^{2}t}p^{2}\right.\\
    &\left.-\frac{i(u-u_{E,t})p}{\eta_{E,t}}\Im\tr G^{2}-\frac{\tau_{E,t}}{2\eta_{E,t}^{2}}v^{2}-\frac{iy_{j}v}{\eta_{E,t}}\right\}dpdvdu.
\end{align*}
Taking the error outside the integral, extending to the whole real line and calculating the resulting Gaussian integrals we obtain \eqref{eq:KAsymp}.
\end{proof}

\begin{proof}[Proof of \Cref{lem:nuConc}]
We recall the statement of \Cref{lem:nuConc}: with 
\begin{align*}
    \mc{E}_{j}&=\left\{\left|\left|\frac{2\eta_{E,t}\mbf{v}_{j}^{*}G^{(j-1)}\mbf{v}_{j}}{1-t\langle G^{2}\rangle}\right|^{2}-1\right|<\frac{C\log N}{\sqrt{Nt}}\right\},
\end{align*}
we have
\begin{align*}
    \nu_{j}(S^{N-j}\setminus\mc{E}_{j})\leq e^{-C'\log^{2}N}.
\end{align*}
This will follow if we can show that
\begin{align}
    \mbf{v}_{j}^{*}\Re(G^{(j-1)})\mbf{v}_{j}&=\left[1+O\left(\frac{\log N}{\sqrt{Nt}}\right)\right]\frac{t}{2\eta_{E,t}}\Im\langle G^{2}\rangle,\label{eq:realPart}\\
    \mbf{v}_{j}^{*}\Im(G^{(j-1)})\mbf{v}_{j}&=\left[1+O\left(\frac{\log N}{\sqrt{Nt}}\right)\right]\frac{1-t\Re\langle G^{2}\rangle}{2\eta_{E,t}},\label{eq:imagPart}
\end{align}
with probability $1-e^{-C\log^{2}N}$. We will prove \eqref{eq:imagPart}; the proof of \eqref{eq:realPart} is exactly the same.

Let $0<r<N\eta_{E,t}^{2}/2t$. Define the matrix $M^{(j-1)}_{u}$ by
\begin{align*}
    M^{(j-1)}_{u}&=\frac{\eta_{E,t}t}{N}\sqrt{H^{(j-1)}_{u}}\Im G^{(j-1)}\sqrt{H^{(j-1)}_{u}}
\end{align*}
and let $M^{(j-1)}=M^{(j-1)}_{u_{E,t}}$. Note that by our restriction on $r$ we have
\begin{align}
    |r|\|M^{(j-1)}_{u}\|&<\frac{1}{2}.\label{eq:MNorm}
\end{align}
We will bound the moment generating function
\begin{align*}
    m_{I}(r)&:=e^{-r\tr M^{(j-1)}}\mbb{E}_{j}\left[e^{r\eta\mbf{v}_{j}^{*}\Im G^{(j-1)}\mbf{v}_{j}}\right].
\end{align*}
Following the same steps as in the proof of \eqref{eq:K}, we have the equality
\begin{align*}
    m_{I}(r)&=\frac{1}{K_{j}(z_{j};A^{(j-1)})}e^{-r\tr M^{(j-1)}+\frac{N}{t(1+\tau_{N})}\eta_{E,t}^{2}+\frac{t}{(1-\tau_{N})N\eta^{2}_{E,t}}y_{j}^{2}}\\
    &\int_{-\infty}^{\infty}\int_{-\infty}^{\infty}e^{-\frac{N}{t(1-\tau_{N})}(u-\Re w_{j})^{2}-\frac{N\tau_{N}}{2t(1-\tau_{N}^{2})}v^{2}-\frac{iy_{j}v}{(1-\tau_{N})\eta_{E,t}}}\hat{m}_{I}(u,v;r)dvdu,
\end{align*}
where
\begin{align*}
    &\hat{m}_{I}(u,v;r)=\\&\left(\frac{N}{\pi t(1+\tau_{N})}\right)^{N-j}\int_{S^{N-j}}e^{-\frac{N}{t(1+\tau_{N})}\mbf{v}_{j}^{*}\left(\eta_{E,t}^{2}+|\ac{A}^{(j-1)}-u|^{2}+iv\Im \ac{A}^{(j-1)}-\frac{rt\eta_{E,t}}{N}\Im G^{(j-1)}\right)\mbf{v}_{j}}dS_{N-j}(\mbf{v}_{j}).
\end{align*}
Let
\begin{align}
    \xi_{j}(u)&=e^{\frac{C|u-u_{E,t}|}{t}}\int_{-\infty}^{\infty}e^{\frac{iNp}{t(1+\tau_{N})}}\det^{-1}\left(1+ip\sqrt{H^{(j-1)}_{u}}(1-rM^{(j-1)}_{u})^{-1}\sqrt{H^{(j-1)}_{u}}\right)dp.\label{eq:xi}
\end{align}
Then by inserting absolute values and rewriting the integral over $\mbf{v}_{j}$ in terms of an integral over $p\in\mbb{R}$ as we did in the proof of \Cref{lem:KBound} we have
\begin{align*}
    m_{I}(r)&\leq\frac{CN}{t^{5/2}K_{j}(z_{j};A^{(j-1)})}e^{\frac{N}{t}\eta_{E,t}^{2}+\frac{1+\tau_{N}}{Nt(1-\tau_{N})}y^{2}}\int_{-\infty}^{\infty}e^{-N\phi(u)}\xi_{j}(u)\det^{-1}(1-rM^{(j-1)}_{u})du,
\end{align*}
where $\phi$ was defined in \eqref{eq:phi}.

The bound
\begin{align*}
    |\xi_{j}(u)|&\leq C\sqrt{\frac{t^{3}}{N}}
\end{align*}
follows in the same way as the equivalent bound \eqref{eq:hBound} on $h_{j}$. To see this we note that since $\|rM^{(j-1)}_{u}\|<1/2$ we have
\begin{align*}
    \tr\left(\sqrt{H^{(j-1)}_{u}}(1-rM^{(j-1)}_{u})^{-1}\sqrt{H^{(j-1)}_{u}}\right)^{2}&\geq\frac{1}{(1+\|rM^{(j-1)}_{u}\|)^{2}}\tr(H^{(j-1)}_{u})^{2}\\
    &\geq\frac{CN}{t^{3}},
\end{align*}
so that the integral in \eqref{eq:xi} can be restricted to the region $|p|<\sqrt{\frac{t^{3}}{N}}\log N$.

For the term $\det^{-1}(1-rM^{(j-1)}_{u})$ we use $-\log(1-x)\leq x+\frac{x^{2}}{1-x}$ to obtain
\begin{align*}
    e^{-r\tr M^{(j-1)}}\det^{-1}\left(1-rM^{(j-1)}_{u}\right)&\leq\exp\left\{r\tr\left(M^{(j-1)}_{u}-M^{(j-1)}\right)+2r^{2}\tr\left(M^{(j-1)}_{u}\right)^{2}\right\}.
\end{align*}
We have used the fact that $\left\|rM^{(j-1)}_{u}\right\|<1/2$. For the first term we have by Taylor's theorem
\begin{align*}
    \left|\tr\big(M^{(j-1)}_{u}-M^{(j-1)}_{u_{E,t}}\big)\right|&\leq\left|u-u_{E,t}\right|\max_{0\leq s\leq 1}\left|\frac{\partial}{\partial u(s)}\tr M^{(j-1)}_{u(s)}\right|,
\end{align*}
where $u(s)=(1-s)u_{E,t}+su$. Since 
\begin{align*}
    \partial_{u}H^{(j-1)}_{u}&=2H^{(j-1)}_{u}\left(W^{(j-1)}_{1}-u\right)H^{(j-1)}_{u},
\end{align*}
and
\begin{align*}
    \left\|H^{(j-1)}_{u}\left(W^{(j-1)}_{1}-u\right)\right\|&\leq\frac{1}{2\eta_{E,t}},
\end{align*}
we obtain
\begin{align*}
    \left|\tr\big(M^{(j-1)}_{u}-M^{(j-1)}_{u_{E,t}}\big)\right|&\leq\frac{r\left|u-u_{E,t}\right|t}{N\eta_{E,t}}\max_{0\leq s\leq 1}\tr H^{(j-1)}_{u(s)}\\
    &\leq\frac{C\left|u-u_{E,t}\right|}{t}.
\end{align*}
The last inequality follows from \eqref{eq:traceEstimate} and \eqref{eq:C1.1}. Since we have $\phi(u)-\phi(u_{E,t})\geq C(u-u_{E,t})^{2}/t$, we deduce that we can again restrict $u$ to the region $|u-u_{E,t}|<\sqrt{\frac{t}{N}}\log N$. In this region we have the explicit bound
\begin{align*}
    \left|\tr\big(M^{(j-1)}_{u}-M^{(j-1)}_{u_{E,t}}\big)\right|&\leq2\sqrt{\frac{t}{N}}\frac{\log N}{\eta_{E,t}},
\end{align*}
which follows in the usual way by replacing $H^{(j-1)}$ with $H$ by interlacing, $H$ with $|G|^{2}$ by \eqref{eq:traceEstimate}, $u$ with $u_{E,t}$ by the resolvent identity, and finally using $\tr|G|^{2}=N/t$. For the trace of $(M^{(j-1)}_{u})^{2}$ we have likewise
\begin{align*}
    \tr\big(M^{(j-1)}_{u}\big)^{2}&=\frac{\eta^{2}_{E,t}t^{2}}{N^{2}}\tr\big(H^{(j-1)}_{u}\Im G^{(j-1)}\big)^{2}\\
    &\leq\frac{t^{2}}{N^{2}\eta_{E,t}^{2}}\tr H^{(j-1)}_{u}\\
    &\leq\frac{2t}{N\eta_{E,t}^{2}}.
\end{align*}

Inserting this into the expression for $m_{I}(r)$ we find
\begin{align*}
    m_{I}(r)&\leq\frac{Ct^{5/2}}{\sqrt{N}K_{j}(z_{j};A^{(j-1)})}\exp\left\{2\sqrt{\frac{t}{N}}\frac{\log N}{\eta_{E,t}}r+\frac{2t}{N\eta_{E,t}^{2}}r^{2}\right\}.
\end{align*}
Since we have assumed that $\mbf{v}_{l}\in\mc{E}_{l}$ for $l=1,...,j-1$, we can use the asymptotics of $\wt{K}_{j}$ in \Cref{lem:KAsymp} to deduce the lower bound
\begin{align*}
    K_{j}(z_{j};A^{(j-1)})&\geq\frac{C\sqrt{N}}{t^{3}},
\end{align*}
and hence
\begin{align*}
    m_{I}(r)&\leq\frac{C}{\sqrt{t}}\exp\left\{2\sqrt{\frac{t}{N}}\frac{\log N}{\eta_{E,t}}r+\frac{2t}{N\eta_{E,t}^{2}}r^{2}\right\}.
\end{align*}
Translating this into a probability bound via Markov's inequality we find
\begin{align*}
    \nu_{j}\left(\left\{\left|\eta_{E,t}\mbf{v}_{l}^{*}\Im G^{(j-1)}\mbf{v}_{l}-\eta_{E,t}t\Tr{H^{(j-1)}\Im G^{(j-1)}}\right|>r+2\sqrt{\frac{t}{N}}\frac{\log N}{\eta_{E,t}}\right\}\right)&\leq \frac{C}{\sqrt{t}}e^{-\frac{N\eta_{E,t}^{2}}{8t}r^{2}}.
\end{align*}
Setting $r=2\sqrt{\frac{t}{N\eta_{E,t}^{2}}}\log N$ we obtain
\begin{align*}
    \mbf{v}_{j}^{*}G^{(j-1)}\mbf{v}_{j}&=t\langle H^{(j-1)}\Im G^{(j-1)}\rangle+O\left(\frac{\log N}{\sqrt{Nt^{3}}}\right).
\end{align*}
To estimate the leading term we rewrite it in terms of $\mc{F}^{(j-1)}$ and $\mc{G}^{(j-1)}$ and then carry out the usual steps of replacing $A^{(j-1)}$ by $A$ using \cref{lem:minorresolvent}, replacing $\mc{F}$ by $\mc{G}$ using the series expansion of $\mc{F}=(1+i\sqrt{\tau_{N}}\mc{G}(\sigma_{Y}\otimes\ac{W}_{2}))^{-1}\mc{G}$ and using \eqref{eq:C3.1'} and \eqref{eq:C3.2'} to estimate each resulting term:
\begin{align*}
    t\langle H^{(j-1)}\Im G^{(j-1)}\rangle&=\frac{t}{2i\eta_{E,t}}\Tr{\mc{F}^{j-1)}\Im(\mc{G}^{(j-1)})\begin{pmatrix}1&0\\0&0\end{pmatrix}}\\
    &=\frac{t}{2i\eta_{E,t}}\Tr{\mc{F}\Im(\mc{G})\begin{pmatrix}1&0\\0&0\end{pmatrix}}+O\left(\frac{1}{Nt^{2}}\right)\\
    &=\frac{t}{2i\eta_{E,t}}\Tr{\mc{G}\Im(\mc{G})\begin{pmatrix}1&0\\0&0\end{pmatrix}}+O\left(\frac{1}{Nt^{2}}\right)\\
    &=\frac{1-t\Re\Tr{G^{2}}}{2\eta_{E,t}}+O\left(\frac{1}{Nt^{2}}\right)\\
    &=\left[1+O\left(\frac{1}{Nt}\right)\right]\frac{1-t\Re\Tr{G^{2}}}{2\eta_{E,t}}.
\end{align*}
Since this is greater than $C/t$, \eqref{eq:imagPart} follows.
\end{proof}

\section{Proof of \Cref{thm2}}
Fix $k>0$, $0<\epsilon<1/2$, $t=N^{-1+2\epsilon}$ and $E\in(-2,2)$. Let $W_{1},W_{2}$ be independent Wigner matrices with atom distribution $\nu_{L}$ satisfying \eqref{eq:nu1} and \eqref{eq:nu2}, $A=W_{1}+i\sqrt{\tau_{N}}W_{2}$, and $M_{t}=A+\sqrt{t}B$, where $B$ is a Gaussian elliptic matrix. For $f\in C_{c}(\mbb{C}^{k})$, let
\begin{align*}
    f_{N,E}(\mbf{z})&=f\left(N\pi\rho_{sc}(E)(z_{1}-E),...,N\pi\rho_{sc}(E)(z_{k}-E)\right),
\end{align*}
and define the statistic
\begin{align*}
    S^{(k)}_{f,E}(M_{t})&=\sum_{i_{1}\neq\cdots\neq i_{k}}f_{N,E}(z_{i_{1}}(M_{t}),...,z_{i_{k}}(M_{t})).
\end{align*}
Recall that if $(W_{1},W_{2})\in\mc{W}_{N,\epsilon}$, then by \Cref{thm1} we can define $\lambda_{E,t}=u_{E,t}+i\eta_{E,t}$ by $\lambda_{E,t}=E+t\langle G_{\lambda_{E,t}}\rangle$. Then we can define
\begin{align}
    \tau_{E,t}&=\beta_{N}(\lambda_{E,t})+\frac{N\tau_{N}\eta_{E,t}^{2}}{t},
\end{align}
and the sequence of events
\begin{align}
    \mc{A}_{N}&=\left\{(W_{1},W_{2})\in\mc{W}_{N,\epsilon}\right\}\bigcap\left\{\tau_{E,t}=\tau_{E}+o(1)\right\}\nonumber\\&\bigcap\left\{\frac{\pi t\rho_{sc}(E)}{\eta_{E,t}}=1+o(1)\right\}\bigcap\left\{N^{1/2}|\langle W_{2}\rangle|=o(1)\right\},\label{eq:A}
\end{align}
where $\tau_{E}=\pi\rho_{sc}(E)\lim_{N\to\infty}N\tau_{N}$. If for any $D>0$ $\mc{A}_{N}$ holds with probability $1-N^{-D}$ for sufficiently large $N$, we will have
\begin{align*}
    \mbb{E}\left[S^{(k)}_{f,E}(M_{t})\right]&=\mbb{E}\left[1_{\mc{A}_{N}}S^{(k)}_{f,E}(M_{t})\right]+O(N^{k-D}).
\end{align*}
We can then choose $D>k$ so that the error can be neglected. Now we write
\begin{align}
    \mbb{E}\left[S^{(k)}_{f,E}(M_{t})\right]&=\mbb{E}\left[1_{\mc{A}_{N}}\mbb{E}\left[S^{(k)}_{f,E}(M_{t})\Big|A\right]\right]+o(1)\nonumber\\
    &=\mbb{E}\left[1_{\mc{A}_{N}}\left(\frac{\pi t\rho_{sc}(E)}{\eta_{E,t}}\right)^{2k}\int_{\mbb{C}^{k}}\wt{f}(\mbf{z})\wt{\rho}^{(k)}_{N}\left(\mbf{z};A\right)d\mbf{z}\right]+o(1),\label{eq:ES}
\end{align}
where 
\begin{align*}
    \wt{f}(\mbf{z})&=f\left(\frac{\pi t\rho_{sc}(E)}{\eta_{E,t}}\mbf{z}+iN\sqrt{\tau_{N}}\pi\rho_{sc}(E)\langle W_{2}\rangle\right)
\end{align*}
and $\wt{\rho}^{(k)}_{N}$ was defined in \eqref{eq:rhoTilde}. Then from the proof of \Cref{thm1} we have
\begin{align*}
    \wt{\rho}^{(k)}_{N}(\mbf{z};A)&=\left[1+O\left(\frac{1}{(Nt)^{1/4}}\right)\right]\rho^{(k)}_{\tau_{E,t}}(\mbf{z}),
\end{align*}
uniformly in compact subsets of $\mbb{C}^{k}$. On the event $\mc{A}_{N}$ we have by definition $\tau_{E,t}=\tau_{E}+o(1)$, $\pi t\rho_{sc}(E)/\eta_{E,t}=1+o(1)$ and $N\sqrt{\tau_{N}}\langle W_{2}\rangle=o(1)$. Since $f\in C_{c}(\mbb{C}^{k})$, we obtain
\begin{align*}
    \wt{f}(\mbf{z})&=f(\mbf{z})+o(1),
\end{align*}
uniformly in $\mbb{C}^{k}$. Hence we can take the limit on the right hand side of \eqref{eq:ES} and obtain
\begin{align*}
    \lim_{N\to\infty}\mbb{E}\left[S^{(k)}_{f,E}(M)\right]&=\int_{\mbb{C}^{k}}f(\mbf{z})\rho^{(k)}_{\tau_{E}}(\mbf{z})d\mbf{z}.
\end{align*}
Thus we need to prove the following lemma.
\begin{lemma}\label{lem:prob}
Let $\epsilon>0$ and $t\geq N^{-1+2\epsilon}$. Let $(W_{1},W_{2})$ be a pair of independent Wigner matrices and define the event $\mc{A}_{N}$ as in \eqref{eq:A}. Then for any $D>0$, we have
\begin{align}
    P(\mc{A}_{N})&\geq 1-N^{-D},
\end{align}
for sufficiently large $N$.
\end{lemma}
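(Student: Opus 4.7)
The plan is to verify the three conjunct events defining $\mc{A}_N$ in \eqref{eq:A} each hold with probability at least $1-N^{-D}$, for any fixed $D>0$, using the standard Wigner machinery. The strategy is to separate the randomness: first condition on a high-probability event for $W_2$, then use the (multi-)resolvent local laws for the independent Wigner matrix $W_1$ on the resulting conditioned measure, on which $\ac{W}_2$ is a deterministic Hermitian matrix that is traceless by construction and has operator norm bounded by a universal constant.

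Standard Wigner norm and trace concentration give $\|W_i\|\leq 2+N^{-\delta}$ and $|\langle W_2\rangle|\leq N^{-1/2+\epsilon/4}$ with probability $1-N^{-D}$, which verifies \eqref{eq:C0} and will also give the bound \eqref{eq:traceConc} used downstream in the proof of \Cref{thm2}. On this $W_2$-event, $\ac{W}_2$ is a fixed matrix with $\|\ac{W}_2\|\leq c_0$ and zero normalised trace. The single-resolvent local semicircle law for $W_1$ then yields $|m_N(z)-m_{sc}(z)|=o(1)$ uniformly in $z\in S_\epsilon$ with overwhelming $W_1$-probability; since $m_{sc}$ satisfies \eqref{eq:C1.1} and \eqref{eq:C1.2} with universal constants on $S_\epsilon$ (the latter is the standard bulk-stability estimate, see \cite{erdos_bulk_2010,johansson_universality_2001}), \eqref{eq:C1.1} and \eqref{eq:C1.2} transfer to $m_N$.

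The conditions \eqref{eq:C3.1}, \eqref{eq:C3.2} and \eqref{eq:C2} all concern traces of alternating products of $G_{z_j}$ with the deterministic, traceless matrix $\ac{W}_2$; this is exactly the setting of the optimal multi-resolvent local law of Cipolloni, Erd\H{o}s and Schr\"{o}der \cite{cipolloni_optimal_2022}. That result gives, with overwhelming probability, the bound $|\Tr{G_z\ac{W}_2}|\leq N^{\epsilon/2}/(N\sqrt\eta)$ (since the deterministic main term of $\Tr{G_z\ac{W}_2}$ vanishes for traceless insertion), yielding \eqref{eq:C3.1}, and for $m\geq2$ the estimate $|\Tr{\prod_{j=1}^m G_{z_j}\ac{W}_2}|\leq c_3/\eta^{m/2-1}$, yielding \eqref{eq:C3.2}. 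For \eqref{eq:C2} I would apply the same law to $\Tr{(\Im G_z\ac{W}_2)^2}$: the multi-resolvent law shows this trace concentrates on a deterministic quantity whose leading order can be computed explicitly from the second-order free-cumulant structure of two independent Wigner matrices and is strictly bounded between two positive constants on the relevant spectral region; combined with $N\tau_N=O(1)$ this gives \eqref{eq:C2}.

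Finally, the two quantitative limits in \eqref{eq:A} follow from the single-resolvent law at $z=\lambda_{E,t}$: the defining equation $\lambda_{E,t}=E+tm_N(\lambda_{E,t})$ together with $m_N\to m_{sc}$ forces $\lambda_{E,t}\to E$ and $\eta_{E,t}/t\to\Im m_{sc}(E)=\pi\rho_{sc}(E)$, which is the third event; plugging into the limit of $\beta_N(\lambda_{E,t})$ obtained above and invoking the normalization $\pi\rho_{sc}(E)\lim_N N\tau_N=\tau_E$ gives the second event. The main obstacle is obtaining the sharp $\eta^{-m/2+1}$ scaling in \eqref{eq:C3.2}: a naive norm-based estimate would lose an entire power of $\eta$ per resolvent and so fall catastrophically short, especially given that $m$ ranges up to $4n_\epsilon$; this is exactly the cancellation captured by the optimal law of \cite{cipolloni_optimal_2022}, which is the essential non-trivial input. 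Once this ingredient is in place, every remaining step reduces to standard concentration for Wigner matrices and direct asymptotics of $m_{sc}$.
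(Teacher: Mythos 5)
Your proposal is correct and follows essentially the same route as the paper: condition on a high-probability event making $\ac{W}_{2}$ a bounded, traceless deterministic matrix, use the single-resolvent local law for \eqref{eq:C0}, \eqref{eq:C1.1}, \eqref{eq:C1.2}, invoke the multi-resolvent local law of \cite{cipolloni_optimal_2022} (with all insertions traceless, so the deterministic term vanishes for $m=1$ and equals $m_{sc}(z_{1})m_{sc}(z_{2})\ac{W}_{2}$ for $m=2$) for \eqref{eq:C3.1}, \eqref{eq:C3.2} and \eqref{eq:C2}, and then read off the two limiting events from the fixed-point equation for $\lambda_{E,t}$ and $\Im m_{sc}$. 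The only minor point is that for the lower bound in \eqref{eq:C2} you should note $N\tau_{N}$ is bounded away from zero (since $\tau_{E}>0$), not merely $O(1)$, which is exactly how the paper uses the hypothesis.
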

\begin{proof}
This follows from local laws for Wigner matrices. Let $m_{sc}$ denote the Stieltjes transform of the semicircle law. The following is the well-known single resolvent averaged local law (see e.g. Theorem 2.2 in \cite{erdos_random_2019}):
\begin{align}
    \left|\langle G_{z}\rangle-m_{sc}(z)\right|&\leq\frac{N^{\epsilon/2}}{N|\Im z|},
\end{align}
uniformly in $z\in S_{\epsilon}$ with probability $1-N^{-D}$. This can be extended to hold simultaneously for all $z\in S_{\epsilon}$ by the usual grid argument. The bounds in \eqref{eq:C0}, \eqref{eq:C1.1}, \eqref{eq:C1.2} and \eqref{eq:C1.3} are standard consequences of this local law. For the remaining conditions that define $\mc{W}_{N,\epsilon}$, we require a multi-resolvent local law. Such a law has been proven by Cipolloni, Erd\H{o}s and Schr\"{o}der \cite{cipolloni_optimal_2022}; their Theorem 2.5 states that there is a deterministic matrix $M(z_{1},B_{1},...,B_{n-1},z_{n})$ such that for any $D>0$ we have
\begin{align}
    \left|\left\langle\prod_{j=1}^{n}G_{j}B_{j}\right\rangle-\langle M(z_{1},B_{1},...,B_{n-1},z_{n})B_{n}\rangle\right|&\leq\frac{N^{\epsilon/2}}{N\eta^{n-a/2}},\label{eq:multiLL}
\end{align}
with probability $1-N^{-D}$ for sufficiently large $N$, where $z_{j}\in S_{\epsilon}$, $\eta=\min|\Im z_{j}|$ and $B_{j}$ are deterministic matrices such that $\|B_{j}\|<C$ and $0\leq a\leq n$ are traceless. As with the single resolvent local law, we can extend this to hold simultaneously for all $\mbf{z}\in S^{n}_{\epsilon}$ by a grid argument. By adjusting $D$ and taking a union bound this holds simultaneously for all $n\leq 4n_{\epsilon}=4\lceil48/\epsilon\rceil$ with probability $1-N^{-D}$. The particular form of $M(z_{1},B_{1},...,B_{n-1},z_{n})$, although explicit in \cite{cipolloni_optimal_2022}, is not relevant for us, except for the following special cases:
\begin{align}
    M(z_{1})&=m_{sc}(z_{1})1_{N},\\
    M(z_{1},B_{1},z_{2})&=m_{sc}(z_{1})m_{sc}(z_{2})B_{1},\label{eq:M2}
\end{align}
for $\tr B_{1}=0$. In the general case we need the following bound from Lemma 2.4 in the same paper.
\begin{align}
    \left|\langle M(z_{1},B_{1},...,B_{n-1},z_{n})B_{n}\rangle\right|&\leq\frac{C}{\eta^{n-1-\lceil a/2\rceil}}.
\end{align}
This is larger than the error in \eqref{eq:multiLL} by a factor $N^{1-\epsilon/2}\eta$. Conditioning on $\ac{W}_{2}$ and setting $B_{j}=\ac{W}_{2}$, we have $a=n$ traceless matrices and so 
\begin{align*}
    \left|\langle G_{z}\ac{W}_{2}\rangle\right|&\leq\frac{N^{\epsilon/2}\|\ac{W}_{2}\|}{N\sqrt{\eta}},\\
    \left|\left\langle\prod_{j=1}^{n}G_{z_{j}}\ac{W}_{2}\right\rangle\right|&\leq\frac{C\|\ac{W}_{2}\|^{n}}{\eta^{n/2-1}},
\end{align*}
for all $n\leq 4n_{\epsilon}$ with probability $1-N^{-D}$. Since $\|\ac{W}_{2}\|<C$ probability $1-N^{-D}$, \eqref{eq:C3.1} and \eqref{eq:C3.2} hold with probability $1-N^{-D}$.

Using \eqref{eq:M2} we find that, conditioned on $W_{2}$,
\begin{align*}
    \langle(\Im(G_{z})\ac{W}_{2})^{2}\rangle&=(\Im m_{sc}(z))^{2}\langle\ac{W}_{2}^{2}\rangle+O\left(\frac{N^{\epsilon/2}}{Nt}\right),
\end{align*}
with probability $1-N^{-D}$. By concentration we have
\begin{align*}
    \left|\langle W_{2}\rangle\right|&\leq\frac{C}{N^{\frac{1+\epsilon}{2}}},\\
    \langle\ac{W}_{2}^{2}\rangle&=1+o(1),
\end{align*}
with probability $1-N^{-D}$. Therefore
\begin{align*}
    \beta_{N}(z)&=N\tau_{N}(\Im m_{sc}(z))^{2}+o(1)
\end{align*}
and \eqref{eq:C2} holds with probability $1-N^{-D}$. Moreover, when $z=\lambda_{E,t}$ we have $\Re z=u_{E,t}=E+O(t)$ and $\Im z=\eta_{E,t}=O(t)$, and so $\Im m_{sc}(\lambda_{E,t})=\pi\rho_{sc}(E)+O(t)$. Since we also have
\begin{align*}
    \frac{\eta_{E,t}}{t}&=\Im\langle G_{\lambda_{E,t}}\rangle\\
    &=\pi\rho_{sc}(E)+o(1),
\end{align*}
we conclude that
\begin{align*}
    \tau_{E,t}&=\beta_{N}(\lambda_{E,t})+\frac{N\tau_{N}\eta_{E,t}^{2}}{t}\\
    &=N\tau_{N}\pi\rho_{sc}(E)\left(1+\pi t\rho_{sc}(E)\right)+o(1)\\
    &=\tau_{E}+o(1)
\end{align*}    
and
\begin{align*}
    \frac{\pi t\rho_{sc}(E)}{\eta_{E,t}}&=\frac{\pi\rho_{sc}(E)}{\Im\langle G_{z}\rangle}\\
    &=1+o(1),
\end{align*}
with probability $1-N^{-D}$. 
\end{proof}

Now we use the method of time reversal from \cite{erdos_bulk_2010}. The essence of this method is to approximate the time-reversed Dyson Brownian motion by the operator
\begin{align*}
    T_{n}&=\sum_{m=0}^{n-1}\frac{(-1)^{m}}{m!}(tL)^{m},
\end{align*}
where $L$ is the generator of the Ornstein-Uhlenbeck process. Given an atom distribution $\nu$ with density $f$, it is shown in Proposition 2.1 in \cite{erdos_bulk_2010} that
\begin{align}
    \int\frac{|e^{tL}T_{n}f-f|^{2}}{e^{tL}T_{n}f}e^{-x^{2}}dx&\leq Ct^{2n},\label{eq:error}
\end{align}
if $f$ is sufficiently smooth. On the other hand, $e^{tL}g$ is the density of a Gauss-divisible matrix element, and if $f$ is sufficiently smooth then $g=T_{n}f$ is regular enough to ensure that the resulting matrix satisfies the local law. Therefore we can apply \Cref{thm1} to deduce that the correlation functions of the matrix with atom density $e^{tL}T_{n}f$ are universal, and then appproximate these correlation functions with those of the matrix with atom density $f$. The latter can be done by noting that the error in the approximation of the joint density of all matrix elements is an upper bound to the error in the approximation of marginals, and in particular the approximation of correlation functions. To adapt this to the current setting, we simply note that since $W_{1}$ and $W_{2}$ are independent, the probability measure of $A=W_{1}+i\sqrt{\tau_{N}}W_{2}$ factorises, $dP(A)\propto dP_{1}(W_{1})dP_{2}(W_{2})$, and we can approximate $P_{1}$ and $P_{2}$ separately.

\paragraph{\small Acknowledgements}
\small{The author is grateful to Anna Maltsev for helpful discussions. This work was supported by the Royal Society, grant number RF/ERE210051.}

\bibliographystyle{plain}
\bibliography{references.bib}

\end{document}